\setlist[enumerate,1]{font=\upshape}
    \def\@settitle{%
      \vspace*{-10pt}
      \begin{flushleft}%
        \LARGE\bfseries
        \strut\@title\strut
      \end{flushleft}%
    }
    \def\@setauthors{%
      \begingroup
      \def\thanks{\protect\thanks@warning}%
      \trivlist
      \raggedright
      \large \@topsep27\p@\relax
      \advance\@topsep by -\baselineskip
    \item\relax
      \author@andify\authors
      \def\\{\protect\linebreak}%
      \authors
      \ifx\@empty\contribs
      \else
      ,\penalty-3 \space \@setcontribs
      \@closetoccontribs
      \fi
      \normalfont
      \endtrivlist
      \endgroup
    }
    \def\@setaddresses{\par
      \nobreak \begingroup
      \small\raggedright
      \def\author##1{\nobreak\addvspace\smallskipamount}%
      \def\\{\unskip, \ignorespaces}%
      \interlinepenalty\@M
      \def\address##1##2{\begingroup
        \par\addvspace\bigskipamount\noindent
        \@ifnotempty{##1}{(\ignorespaces##1\unskip) }%
        {\ignorespaces##2}\par\endgroup}%
      \def\curraddr##1##2{\begingroup
        \@ifnotempty{##2}{\nobreak\noindent\curraddrname
          \@ifnotempty{##1}{, \ignorespaces##1\unskip}\/:\space
          ##2\par}\endgroup}%
      \def\email##1##2{\begingroup
        \@ifnotempty{##2}{\nobreak\noindent E-mail address%
          \@ifnotempty{##1}{, \ignorespaces##1\unskip}\/:\space
          \ttfamily##2\par}\endgroup}%
      \def\urladdr##1##2{\begingroup
        \def~{\char`\~}%
        \@ifnotempty{##2}{\nobreak\noindent\urladdrname
          \@ifnotempty{##1}{, \ignorespaces##1\unskip}\/:\space
          \ttfamily##2\par}\endgroup}%
      \addresses
      \endgroup
      \global\let\addresses=\@empty
    }
    \def\@setabstracta{%
      \ifvoid\abstractbox
      \else
      \skip@17pt \advance\skip@-\lastskip
      \advance\skip@-\baselineskip \vskip\skip@
      \box\abstractbox
      \prevdepth\z@ 
      \vskip-28pt
      \fi
    }
    \renewenvironment{abstract}{%
      \ifx\maketitle\relax
      \ClassWarning{\@classname}{Abstract should precede
        \protect\maketitle\space in AMS document classes; reported}%
      \fi
      \global\setbox\abstractbox=\vtop \bgroup
      \normalfont\small
      \list{}{\labelwidth\z@
        \leftmargin0pc \rightmargin\leftmargin
        \listparindent\normalparindent \itemindent\z@
        \parsep\z@ \@plus\p@
        
      }%
    \item[\hskip\labelsep\bfseries\abstractname.]%
    }{%
      \endlist\egroup
      \ifx\@setabstract\relax \@setabstracta \fi
    }
    \def\ps@headings{\ps@empty
      \def\@evenhead{%
        \setTrue{runhead}%
        \normalfont\scriptsize
        \rlap{\thepage}\hfill
        \def\thanks{\protect\thanks@warning}%
        \leftmark{}{}}%
      \def\@oddhead{%
        \setTrue{runhead}%
        \normalfont\scriptsize
        \def\thanks{\protect\thanks@warning}%
        \rightmark{}{}\hfill \llap{\thepage}}%
      \let\@mkboth\markboth
    }\ps@headings
    \def\section{\@startsection{section}{1}%
      \z@{-1.4\linespacing\@plus-.5\linespacing}{.8\linespacing}%
      {\normalfont\bfseries\Large}}
    \def\subsection{\@startsection{subsection}{2}%
      \z@{-.8\linespacing\@plus-.3\linespacing}{.5\linespacing\@plus.2\linespacing}%
      {\normalfont\bfseries\large}}
    \def\subsubsection{\@startsection{subsubsection}{3}%
      \z@{.7\linespacing\@plus.2\linespacing}{-1.5ex}%
      {\normalfont\bfseries}}
    \def\paragraph{\@startsection{paragraph}{4}%
      \z@{.7\linespacing\@plus.2\linespacing}{-1.5ex}%
      {\normalfont\itshape}}
    \def\@secnumfont{\bfseries}
    \renewcommand\contentsnamefont{\bfseries}
    \def\@starttoc#1#2{\begingroup
      \setTrue{#1}%
      \par\removelastskip\vskip\z@skip
      \@startsection{}\@M\z@{\linespacing\@plus\linespacing}%
      {.5\linespacing}{
        \contentsnamefont}{#2}%
      \ifx\contentsname#2%
      \else \addcontentsline{toc}{section}{#2}\fi
      \makeatletter
      \@input{\jobname.#1}%
      \if@filesw
      \@xp\newwrite\csname tf@#1\endcsname
      \immediate\@xp\openout\csname tf@#1\endcsname \jobname.#1\relax
      \fi
      \global\@nobreakfalse \endgroup
      \addvspace{32\p@\@plus14\p@}%
      \let\tableofcontents\rela\x
    }
    \def\contentsname{Contents}
    \def\l@section{\@tocline{2}{.5ex}{0mm}{5pc}{}}
    \def\l@subsection{\@tocline{2}{0pt}{2em}{5pc}{}}
\def\to{\mathchoice{\longrightarrow}{\rightarrow}{\rightarrow}{\rightarrow}}
\newcommand{\shortxra}[2][]{\ext@arrow 0359\rightarrowfill@{#1}{#2}}
\def\longrightarrowfill@{\arrowfill@\relbar\relbar\longrightarrow}
\newcommand{\longxra}[2][]{\ext@arrow 0359\longrightarrowfill@{#1}{#2}}
\renewcommand{\xrightarrow}[2][]{\mathchoice{\longxra[#1]{#2}}%
  {\shortxra[#1]{#2}}{\shortxra[#1]{#2}}{\shortxra[#1]{#2}}}
\def\addtagsub#1{\let\oldtf=\tagform@\def\tagform@##1{\oldtf{##1}\hbox{$_{#1}$}}}
\def\Nopagebreak{\@nobreaktrue\nopagebreak}
\newtheoremstyle{theorem-giventitle}
        {}{}              
        {\itshape}                      
        {}                              
        {\bfseries}                     
        {.}                             
        {\thm@headsep}                             
        {\thmnote{\bfseries#3}}
\newtheoremstyle{theorem-givenlabel}
        {}{}              
        {\itshape}                      
        {}                              
        {\bfseries}                     
        {.}                             
        {\thm@headsep}                             
        {\thmname{#1}~\thmnumber{#3}\setcurrentlabel{#3}}
\newtheoremstyle{definition-giventitle}
        {}{}              
        {}                      
        {}                              
        {\bfseries}                     
        {.}                             
        {\thm@headsep}                             
        {\thmnote{\bfseries#3}}
\def\setcurrentlabel#1{\gdef\@currentlabel{#1}}
\newtheorem{theorem}{Theorem}[section]
\newtheorem{theoremalpha}{Theorem}
\newtheorem{proposition}[theorem]{Proposition}
\newtheorem{corollary}[theorem]{Corollary}
\newtheorem{lemma}[theorem]{Lemma}
\newtheorem{conjecture}{Conjecture}
\newtheorem{question}[conjecture]{Question}
\theoremstyle{definition}
\newtheorem{definition}[theorem]{Definition}
\newtheorem{remark}[theorem]{Remark}
\theoremstyle{theorem-giventitle}
\newtheorem{theorem-named}{}
\theoremstyle{theorem-givenlabel}
\newtheorem{theorem-labeled}{Theorem}
\theoremstyle{definition-giventitle}
\newtheorem{definition-named}{}
\newtheorem{conjecture-named}{}
\newtheorem{case-named}{}
\numberwithin{equation}{section}
\def\Z{\mathbb{Z}}
\def\R{\mathbb{R}}
\def\Q{\mathbb{Q}}
\def\C{\mathbb{C}}
\def\cP{\mathcal{P}}
\def\cN{\mathcal{N}}
\def\tilde{\widetilde}
\def\sm{\smallsetminus}
\DeclareMathOperator\Ker{Ker}
\DeclareMathOperator\Coker{Coker}
\def\Im{\operatorname{Im}}
\DeclareMathOperator\sign{sign}
\def\rank{\operatorname{rank}}
\def\lk{\operatorname{lk}}
\def\cupover#1{\mathbin{\mathop{\cup}_{#1}}}
\def\Bl{\mathop{B\ell}}
\def\rhot{\rho^{(2)}}
\def\cC{\mathcal{C}}
\def\cF{\mathcal{F}}
\def\cK{\mathcal{K}}
\def\cB{\mathcal{B}}
\def\C{\mathbb{C}}
\def\lt{L^2}
\begin{document}

\title
  [Iterated satellite operators]
  {Iterated satellite operators on the knot concordance group}

\author{Jae Choon Cha}
\address{
  Center for Research in Topology and Department of Mathematics\\
  POSTECH\\
  Pohang 37673\\
  Republic of Korea
}
\email{jccha@postech.ac.kr}

\author{Taehee Kim}
\address{Department of Mathematics\\
  Konkuk University\\
  Seoul 05029\\
  Republic of Korea
}
\email{tkim@konkuk.ac.kr}

\thanks{\noindent
  The first named author was supported by the National Research Foundation of Korea grant RS-2019-NR039996.
  The second named author was supported by the National Research Foundation of Korea grant RS-2023-NR076429.
}

\def\subjclassname{\textup{2010} Mathematics Subject Classification}
\expandafter\let\csname subjclassname@1991\endcsname=\subjclassname
\expandafter\let\csname subjclassname@2000\endcsname=\subjclassname
\subjclass{57K10, 57N70%
%
}
\keywords{Satellite operators, knot concordance}

\begin{abstract}
  We show that for a winding number zero satellite operator $P$ on the knot concordance group, if the axis of $P$ has nontrivial self-pairing under the Blanchfield form of the pattern, then the image of the iteration $P^n$ generates an infinite rank subgroup for each~$n$.
  Furthermore, the graded quotients of the filtration of the knot concordance group associated with $P$ have infinite rank at all levels.
  This gives an affirmative answer to a question of Hedden and Pinz\'{o}n-Caicedo in many cases.
  We also show that under the same hypotheses, $P^n$ is not a homomorphism on the knot concordance group for each~$n$.
  We use amenable $\lt$-signatures to prove these results.
\end{abstract}

\maketitle

\section{Introduction}
\label{section:introduction} 

Manifolds of dimensions 3 and 4 are key subjects in low dimensional topology.
For 3-manifolds, decomposition along surfaces is fundamental.
The prime decomposition theorem of Kneser and Milnor gives a unique decomposition, along spheres, into prime 3-manifolds.
For irreducible 3-manifolds, \emph{toral} decompositions are essential in the theory of Jaco-Shalen and Johannson and in Thurston's geometrization.
Knot theory has been a rich source of the toral decomposition:
tying a knot in a solid torus (called a \emph{pattern}) along another knot in $S^3$ (called a \emph{companion} or \emph{orbit}), one obtains a \emph{satellite knot}, whose exterior is decomposed, along a torus, into the exteriors of the pattern and companion.
Iteration of satellite constructions produces more sophisticated examples.

The 3-dimensional satellite construction is also useful in studying 4-dimensional topology, via knot concordance.
Two knots $K$ and $J$ in $S^3$ are \emph{concordant} if there is a properly embedded locally flat annulus in the 4-manifold $S^3\times [0,1]$ bounded by $-K\times 0$ and $J\times 1$.
(Knots and manifolds are oriented in this paper.)
The concordance classes form an abelian group under connected sum, which is called the \emph{knot concordance group}.
We denote it by~$\cC$.
A knot is \emph{slice} if it is concordant to the unknot.
The smooth version of $\cC$ is defined by requiring the annulus smoothly embedded.
Knot concordance has been studied intensively, particularly regarding its connection with fundamental questions in dimension~4.
But our understanding is largely limited, for both topological and smooth cases.

Since the 80s, satellite construction has been a major tool in understanding the difference between $\cC$ and the high dimensional knot concordance groups using the Casson--Gordon invariant.
See, e.g.,~\cite{Litherland:1984-1,Gilmer:1982-1,Gilmer:1983-1,Gilmer-Livingston:1992-1,livingston:1983-1}.
A breakthrough of Cochran, Orr and Teichner~\cite{Cochran-Orr-Teichner:2003-1} led us to use \emph{iterated} satellite constructions to understand ``higher order'' structures in~$\cC$ which are even invisible to the Casson--Gordon invariant.
Cochran, Harvey and Leidy investigated this direction successfully in~\cite{Cochran-Harvey-Leidy:2009-01,Cochran-Harvey-Leidy:2009-02}.
Briefly, iteration sends a companion knot deeper (from the boundary) in the toral decomposition in dimension 3, and it interplays elegantly with the filtrations of Cochran, Orr and Teichner which are geometrically defined via Whitney towers and gropes in 4-manifolds.

Also, satellite constructions exhibit the difference between topological and smooth categories in dimension~4.
A well-known example is the Whitehead doubling operator, which results in satellite knots with Whitehead pattern.
All Whitehead doubles are topologically slice by Freedman~\cite{Freedman:1984-1}, while Casson and Akbulut observed that many of these are not smoothly slice using Donaldson's work~\cite{Donaldson:1983-1}.
Since then, many authors used various satellite constructions, together with modern smooth invariants such as gauge theory, Heegaard Floer homology and Khovanov homology, to study the smooth concordance group of topologically slice knots, which measures the difference between the two categories.
See, e.g.,~\cite{Hedden:2007-1,Hedden-Kirk:2010-1,Hedden-Livingston-Ruberman:2010-01,Hedden-Kim-Livingston:2016-1}.
More recently, in~\cite{Cochran-Harvey-Horn:2012-1,Cha-Kim:2021-1,Cha:2021-1}, it was shown that \emph{iterated} satellite constructions do generate rich structures in deeper parts in the smooth concordance group of topologically slice knots, which most, if not all, known modern smooth invariants fail to detect.

\subsubsection*{Some conjectures and questions}

The growing successful applications motivate demands for a better understanding of the 4-dimensional nature of satellite construction.
In the introduction of~\cite{Hedden-Pinzon-Caicedo:2021-1}, Hedden and Pinz\'{o}n-Caicedo present an excellent discussion about this and formulate several intriguing conjectures.
For brevity, denote by $P(K)$ the satellite knot with pattern $P\subset S^1\times D^2$ and companion~$K\subset S^3$.
This means that $P(K)\subset S^3$ is the image of $P\subset S^1\times D^2$ under a diffeomorphism of $S^1\times D^2$ onto a tubular neighborhood of $K$ along the zero framing.
See Figure~\ref{figure:satellite-example}.
The association $K\mapsto P(K)$ descends to an operator $P\colon \cC\to \cC$ on the knot concordance group.
Let $P^n=P\circ\cdots \circ P\circ P$, the iterated satellite operator that applies $P$ $n$ times.

\begin{figure}[h]
  \includestandalone[scale=1]{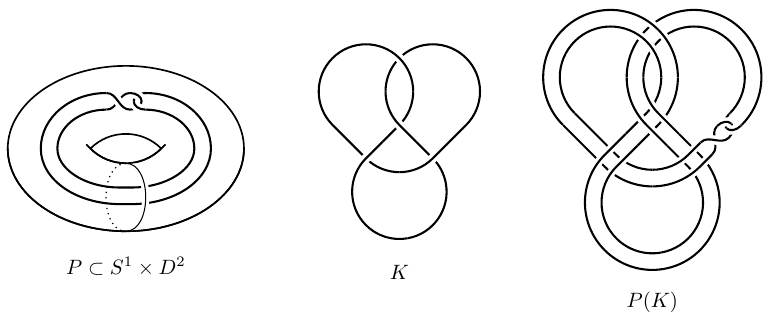}
  \caption{A satellite knot $P(K)$ with pattern $P$ and companion $K$.}
  \label{figure:satellite-example}
\end{figure}

In the literature, several authors constructed infinitely many knots of the form $P(K)$ which are linearly independent in~$\cC$, for a fixed~$P$.
For such $P$, the image of $P\colon \cC\to \cC$ generates an infinite rank subgroup.
Interestingly, for all known cases, the image of a satellite operator $P\colon \cC\to \cC$ is either large like this, or a constant.
This leads us to the following dichotomy conjecture.

\begin{conjecture}[\cite{Hedden-Pinzon-Caicedo:2021-1}]
  \label{conjecture:infinite-rank}
  Every satellite operator $P\colon \cC\to \cC$ is constant or has image generating an infinite rank subgroup in~$\cC$.
\end{conjecture}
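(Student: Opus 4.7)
The plan is to split the conjecture by the winding number $w$ of the pattern $P \subset S^1\times D^2$, and to apply signature-type invariants of successively higher order to detect infinitely many linearly independent elements in the image.

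When $|w|\ge 2$, the Levine–Tristram signatures transform predictably under satellites: $\sigma_{P(K)}(\omega) = \sigma_{P(U)}(\omega)+\sigma_K(\omega^w)$ holds for $\omega\in S^1$ away from the zeros of relevant Alexander polynomials. Taking companions $K_i=T(2,2i+1)$ whose signature functions are $\Q$-linearly independent as step functions on $S^1$, the knots $P(K_i)$ are linearly independent in $\cC$, and the image of $P$ has infinite rank. When $|w|=1$, the pattern acts as an isomorphism on the rational Alexander module, and a parallel argument using Casson–Gordon invariants or first-order metabelian $\lt$-signatures produces infinitely many independent satellites, provided one checks that the invariant survives the operator.

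The core case is $w=0$. Here my approach follows the strategy signalled by the abstract: assume the axis $\eta$ satisfies $\Bl_P(\eta,\eta)\neq 0$, and use the amenable $\lt$-signatures of Cha–Orr. Concretely, I would build a tower of representations $\pi_1(M_{P^n(K)})\to \Gamma_n$ into poly-torsion-free-abelian groups along the derived series of the knot group, where each extension is obtained by pulling back a discriminating character through a copy of the axis $\eta$. The nontrivial self-pairing condition $\Bl_P(\eta,\eta)\neq 0$ guarantees that each step of this construction extends the previous representation nontrivially, so the resulting $\rho^{(2)}$-invariant of $P^n(K)$ tracks a chosen initial Levine–Tristram signature of the companion $K$ itself. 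Then, picking companions $K_i$ with $\Q$-linearly independent initial signatures and with Alexander modules arranged so that competing contributions vanish (in the Cochran–Harvey–Leidy style), the values of $\rho^{(2)}$ on $P^n(K_i)$ become linearly independent, producing an infinite rank subgroup in the image of $P^n$ for every $n$.

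The main obstacle — and the reason the conjecture is still open in full — is the residual $w=0$ regime in which the axis is null with respect to the Blanchfield form at every derived level. In such a situation every $\lt$-signature obstruction of the above type degenerates, because the representations constructed from $\eta$ collapse onto an abelian quotient. Resolving this would require either producing a new invariant that detects the companion past the simultaneous vanishing of all higher-order Blanchfield self-pairings, or else a geometric argument showing that such a universally degenerate pattern must factor through a slice operator and hence induce a constant map on $\cC$. I expect this dichotomy to be the decisive hurdle, and it is precisely what the hypothesis of the paper is designed to sidestep.
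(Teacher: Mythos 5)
The statement you were asked to prove is Conjecture~\ref{conjecture:infinite-rank}, which the paper attributes to Hedden and Pinz\'{o}n-Caicedo and does \emph{not} prove. The paper's own contribution toward it is Theorem~\ref{theorem:main} (winding number zero, $\Bl(\eta,\eta)\ne 0$) together with Propositions~\ref{proposition:zeroth-quotient} and~\ref{proposition:nonzero-winding-number} for the nonzero winding number case; the conjecture itself is left open. Your write-up correctly reflects this: it reassembles the known partial results and then names the residual case as unresolved. That is honest, but it means the proposal is not a proof, and no amount of tightening the intermediate steps changes that.

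Two technical remarks on the parts you do sketch. First, your split of the nonzero winding number case into $|w|\ge 2$ and $|w|=1$ is unnecessary, and the appeal to Casson--Gordon invariants when $|w|=1$ is overkill. The Litherland formula $\delta_{P(K)}(\theta)=\delta_{P(U)}(\theta)+\delta_K(w\theta)$ holds for all $w\ne 0$, and the standard argument --- choose companions whose signature jumps lie off the jump set of $\delta_{P(U)}$ and off each other's --- gives linear independence of $P(K_i)$ uniformly in $w\ne 0$. This is exactly what the paper (and Hedden--Pinz\'{o}n-Caicedo's Proposition~8) does. Second, your $w=0$ sketch glosses over the obstacle that the paper's proof is specifically engineered to clear: you propose ``arranging so that competing contributions vanish,'' which is the Cochran--Harvey--Leidy strategy and only works when $P(U)$ is slice. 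When $P(U)$ is not slice, the $\rho^{(2)}$-contributions from the copies of $M(P(U))$ in the cobordism tower do not vanish and cannot be cancelled by choice of companion. The paper's device is to fix a Cheeger--Gromov bound $L$ for $M(P(U))$, and for each $m$ pick \emph{finitely many} companions $J_0^1,\dots,J_0^m$ whose leading signatures dominate $mnL$; this yields rank $\ge m$ for every $m$ without ever exhibiting a single infinite independent family. That finite-family bootstrap, and the associated choice of $\Z_{p_1}$-coefficients and the Lagrangian/Blanchfield-bordism machinery needed to push the mixed-characteristic representation down the tower, is the core technical content of Theorem~\ref{theorem:main} and is absent from your outline.

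Finally, you correctly identify the genuinely open regime as $w=0$ with $\Bl(\eta,\eta)=0$ at all derived levels. One caveat worth recording: the Hedden--Pinz\'{o}n-Caicedo double-branched-cover argument already handles some such operators (for instance those with trivial Alexander polynomial, where $\Bl(\eta,\eta)$ is forced to vanish), albeit only in the smooth category. So the untouched case is narrower than ``$\Bl(\eta,\eta)=0$''; it is the class of operators null for the Blanchfield pairing \emph{and} for all currently available refinements (branched-cover linking forms, higher-order Blanchfield forms, etc.), which is precisely the dichotomy you flag at the end.
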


For $P\subset S^1\times D^2$, the \emph{winding number} is defined to be the integer representing the class $[P] \in H_1(S^1\times D^2)=\Z$.
It was observed in \cite[Proposition~8]{Hedden-Pinzon-Caicedo:2021-1} that $P(\cC)$ always generates an infinite rank subgroup if the winding number is nonzero.
So Conjecture~\ref{conjecture:infinite-rank} is reduced to the winding number zero case. Recently, several authors produced partial answers for Conjecture~\ref{conjecture:infinite-rank}.
See, e.g.,\ \cite{Daemi-Imori-Sato-Scaduto-Taniguchi:2022-1,Dai-Hedden-Mallick-Stoffregen:2022-1,Livingston:2022-1}.

To generalize Conjecture~\ref{conjecture:infinite-rank} for iterated satellite operators, Hedden and Pinz\'{o}n-Caicedo consider a filtration associated with a satellite operator $P$,
\begin{equation}
  \cdots \subset \langle P^n(\cC) \rangle \subset \cdots \subset \langle P^2(\cC) \rangle \subset \langle P(\cC) \rangle \subset \langle P^0(\cC) \rangle = \cC,
  \label{equation:P-filtration}
\end{equation}
which they call the \emph{$P$-filtration}.
Here $P^0$ is the identity map, hence $P^0(K)=K$, and $\langle S\rangle$ is the subgroup generated by a subset $S\subset \cC$.

\begin{question}[\cite{Hedden-Pinzon-Caicedo:2021-1}]
  \label{question:rank-iterated-satellite}
  If a winding number zero satellite operator $P\colon \cC\to \cC$ is non-constant, does each associated graded quotient $\langle P^n(\cC)\rangle/\langle P^{n+1}(\cC)\rangle$, $n\ge 0$, have infinite rank?
\end{question}

For the nonzero winding number case, see our discussion given below, especially Proposition~\ref{proposition:nonzero-winding-number} in this section.

Note that an affirmative answer to Question~\ref{question:rank-iterated-satellite} implies that Conjecture~\ref{conjecture:infinite-rank} is true.

\subsubsection*{Blanchfield form and iterated satellite operators}

Our first main result presents an affirmative answer to Conjecture~\ref{conjecture:infinite-rank} and Question~\ref{question:rank-iterated-satellite} for a large family of satellite operators~$P$.
To state it, we use the following notations.
Let $U\subset S^3$ be the trivial knot.
Applying $P\subset S^1\times D^2$ to $U$ by identifying $S^1\times D^2$ with a tubular neighborhood $N$ of~$U$, one obtains the knot~$P(U)\subset S^3$.
The core $\eta$ of the solid torus $S^3\sm N$ is called the \emph{axis} of~$P$.
The winding number of $P$ is equal to $\lk(\eta, P(U))$.
So, when the winding number is zero, $\eta$~represents an element in the Alexander module $A = H_1(S^3 \sm P(U);\Z[t^{\pm 1}])$ of the knot~$P(U)$.
Let $\Bl\colon A\times A \to \Q(t)/\Z[t^{\pm 1}]$ be the Blanchfield form~\cite{Blanchfield:1957-1} of~$P(U)$.

\begin{theoremalpha}
  \label{theorem:main}
  If a satellite operator $P\colon \cC\to \cC$ has winding number zero and $\Bl(\eta,\eta)$ is nontrivial for the axis $\eta$, then $\langle P^n(\cC)\rangle/\langle P^{n+1}(\cC)\rangle$ has infinite rank for all $n\ge 0$. In particular, $\langle P(\cC)\rangle$ has infinite rank. 
\end{theoremalpha}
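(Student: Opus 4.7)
The plan is to apply the mixed coefficient system technique of Cochran--Harvey--Leidy, in the amenable $\lt$-signature refinement, to construct for each $n\ge 0$ an infinite family of elements of $P^n(\cC)$ that remain $\Q$-linearly independent in the quotient $\langle P^n(\cC)\rangle/\langle P^{n+1}(\cC)\rangle$. First, I would fix a family of seed knots $J_1, J_2, \ldots$ (for instance, connected sums of $(2,2k+1)$-torus knots with varying multiplicities) whose Levine--Tristram signature integrals $\int_{S^1}\sigma_\omega(J_i)\,d\omega$ are $\Q$-linearly independent and arbitrarily large in absolute value. The candidate generators are $K_{n,i}:=P^n(J_i)\in P^n(\cC)$.

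The invariant used to detect them is an amenable $\lt$-signature $\rhot(M_K,\phi)$ of the zero-surgery $M_K$, where $\phi$ sends $\pi_1(M_K)$ to an amenable group $\Gamma_n$ in Strebel's class $D(R)$ of derived length $n+2$. The group $\Gamma_n$ is built inductively, one derived layer per application of $P$, using the isotropic-submodule method: at each level the construction requires a submodule of the higher-order Alexander module on which the noncommutative Blanchfield form is \emph{nonzero}, and this is precisely what the hypothesis $\Bl(\eta,\eta)\ne 0$ supplies through the axis~$\eta$. In particular, $\eta$ survives as a nontrivial self-paired class in the module of $P(U)$, which after the inductive construction translates to a nontrivial self-paired class at every iteration.

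The main computation is the standard satellite formula for $\lt$-signatures, applied $n$ times, which yields an identity of the form
\[
  \rhot(M_{K_{n,i}},\phi) = c\cdot\int_{S^1}\sigma_\omega(J_i)\,d\omega + R_n
\]
with $c$ a nonzero rational constant and $R_n$ a remainder that depends only on $P$ and the chosen coefficient system, not on~$i$. For elements of $\langle P^{n+1}(\cC)\rangle$, one further application of $P$ pushes the companion information one derived layer deeper than $\phi$ can see, so such elements contribute only terms absorbed into $R_n$. Assume a hypothetical concordance relation $\sum a_i K_{n,i} = W$ for some $W\in\langle P^{n+1}(\cC)\rangle$. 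The amenable $\lt$-signature defect bound (Cha, Cheeger--Gromov) for the null-concordance of $\sum a_i K_{n,i} - W$ in the topological category bounds $|\rhot(\cdot,\phi)|$ by a constant independent of the choice of seeds, while the seed contribution $\sum a_i c\int\sigma_\omega(J_i)\,d\omega$ may be made arbitrarily large by scaling the $J_i$, forcing all $a_i=0$.

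The main obstacle is the inductive extension of the coefficient system $\phi$ through the $n$ satellite layers in a way that is compatible with a putative slice disk exterior for a linear combination. This is where $\Bl(\eta,\eta)\ne 0$ plays the decisive role: at each of the $n$ satellite steps, one must verify that the axis class lies in a nontrivial isotropic summand of the appropriate higher-order Blanchfield form, so that the next representation in the tower is well-defined and nontrivial when restricted along the companion meridian. Propagating the nontrivial self-pairing of $\eta$ through the derived tower---showing that the lifted axis at the $k$-th iteration still has nontrivial self-pairing with respect to the noncommutative Blanchfield form at level $k$---is the chief technical hurdle, and without the hypothesis $\Bl(\eta,\eta)\ne 0$ the axis could be absorbed at some step and the detection depth would collapse.
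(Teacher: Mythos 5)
Your high-level framework---amenable $\lt$-signatures, an inductively built coefficient tower in a Strebel class, propagation of the meridian's nontriviality via the nonvanishing of the higher-order Blanchfield self-pairing, which is exactly what $\Bl(\eta,\eta)\ne 0$ feeds into---correctly identifies the machinery, and the paper does use all of it. But the quantitative core of your argument has a genuine gap, precisely at the point that is the novel difficulty here (and which the paper flags explicitly at the start of Section~\ref{section:iterated-satellites-infinite-rank}): $P(U)$ is \emph{not} assumed slice.

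Because $P(U)$ need not be slice, the cobordism $W_0$ assembled from a putative relation $\#_i a_i P^n(J_i) \in \langle P^{n+1}(\cC)\rangle$ has boundary components equal to copies of $M(P(U))$ whose $\rhot$-invariants do not vanish. They are bounded by a Cheeger--Gromov constant $L$, but there are roughly $n\sum_i|a_i|$ of them, so your ``remainder $R_n$, depending only on $P$ and the coefficient system, not on~$i$'' is simply false: the remainder grows with $\sum|a_i|$, which a priori is unbounded. Consequently the step ``the defect bound is a constant independent of the choice of seeds, while the seed contribution may be made arbitrarily large by scaling the $J_i$'' is circular---once you fix the infinite family $\{J_i\}$ you cannot rescale it to defeat a relation whose coefficients $a_i$ you do not control. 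This is exactly why the paper cannot exhibit a single infinite linearly independent family; instead, for each $m$ it builds $m$ knots $J_0^1,\dots,J_0^m$ with $\sigma_{J_0^1}(\omega_1) > \tfrac12 mnL$ (Lemma~\ref{lemma:knots-J_0^i}), so that in any relation the dominant term $a_1\cdot 2\sigma_{J_0^1}(\omega_1) > a_1 mnL$ beats the accumulated error $nL\sum|a_i|\le a_1 m n L$. Producing $m$-dependent families still gives infinite rank, but your argument as written does not.

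Two further issues. First, you use $\Z$-valued representations and the full integral $\int_{S^1}\sigma_\omega(J_i)\,d\omega$; this does not produce the clean ``one dominant term, the rest exactly zero'' structure. The paper instead passes to $\Z_{p_1}$ at the top of the tower and uses a mod-$p$ reduction $\Bl^{p}$ of the Blanchfield form, so that the meridian image is $\Z_{p_1}$, the relevant $\rhot$ becomes $\tfrac1{p_1}\sum_r\sigma_{J_0^i}(\omega_1^r)$, and by concentrating $\sigma_{J_0^i}$ near $p_i$-th roots of unity for distinct primes $p_i$ one gets $\rhot(M(J_0^i),\phi)=0$ exactly for $i>1$ (equation~\eqref{equation:top-level-rho}). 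Mere $\Q$-linear independence of real numbers does not yield the quantitative separation your argument needs when the error depends on $\sum|a_i|$. Second, you never address how to pass from the quotient by $\langle P^{n+1}(\cC)\rangle$, an arbitrary subgroup, to a tractable filtration quotient; the paper's Lemma~\ref{lemma:affine-inclusion}, showing $\langle P^{n+1}(\cC)\rangle\subset\cF_{n+1}+\langle P^{n+1}(U)\rangle$, is what makes the amenable signature theorem applicable, and without it the $\lt$-signature defect bound you invoke has no solvability hypothesis to feed on.
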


All results in this paper, including Theorem~\ref{theorem:main}, hold in the topological category, and consequently in the smooth category.

The Blanchfield form condition in Theorem~\ref{theorem:main} is satisfied in many cases.
See Remark~\ref{remark:nonvanishing-of-eta} and Lemma~\ref{lemma:nontrivial-self-blanchfield}.

We remark that some special cases of Theorem~\ref{theorem:main} were known earlier.
Cochran--Harvey--Leidy proved that $\langle P^n(\cC)\rangle/\langle P^{n+1}(\cC)\rangle$ has infinite rank for $n\ge 1$ and certain $P$ called \emph{robust doubling operators}.
A robust doubling operator is a winding number zero operator $P$ such that $P(U)$ is a ribbon knot satisfying certain conditions on the cyclicity and the behavior of a generator of the Alexander module and conditions on first order $L^2$-signatures~\cite[Definitions 2.6 and 7.2]{Cochran-Harvey-Leidy:2009-02}.
More generally, in~\cite{Cha:2010-01}, it was implicitly proved that Theorem~\ref{theorem:main} holds for winding number zero $P$ such that $\Bl(\eta,\eta)$ is nontrivial \emph{and $P(U)$ is slice}.
(See Section~\ref{subsection:slice-satellite-operators} for related discussions.)
Theorem~\ref{theorem:main} generalizes these.

Also, Hedden and Pinz\'{o}n-Caicedo proved a related result.
Recall that the branched double cover of a knot is a rational homology sphere, so the rational self-linking number of a framed simple closed curve in the cover is defined.
The main result of \cite{Hedden-Pinzon-Caicedo:2021-1} states that if $P$ is a satellite operator with winding number zero, and if the lift of the zero framed axis $\eta$ has nonzero rational self-linking in the double branched cover of $P(U)$, then $\langle P(\cC)\rangle$ is an infinite rank subgroup in the smooth knot concordance group.

This result in \cite{Hedden-Pinzon-Caicedo:2021-1} and Theorem~\ref{theorem:main} are complementary from multiple viewpoints.
Hedden--Pinz\'{o}n-Caicedo is restricted to the smooth category, while Theorem~\ref{theorem:main} applies to both smooth and topological categories. Hedden--Pinz\'{o}n-Caicedo applies, however, even to some operators $P$ such that $P(U)$ has trivial Alexander polynomial (in this case the image of $P$ contains only topologically slice knots by Freedman's work \cite{Freedman:1984-1}), while the Blanchfield form condition in Theorem~\ref{theorem:main} cannot be satisfied for such~$P$\@.
On the other hand, Hedden--Pinz\'{o}n-Caicedo does not apply to iterated satellite operators and, therefore, may not directly address the study of Question~\ref{question:rank-iterated-satellite}, because the rational self-linking number in the branched cover automatically vanishes for $P^n$ with $n\ge 2$ (see~\cite[Section~5.3]{Hedden-Pinzon-Caicedo:2021-1});
Theorem~\ref{theorem:main} detects the nontriviality of the iterated satellite $P$-filtration at each stage and shows that Conjecture~\ref{conjecture:infinite-rank} holds for~$P^n$ with $n$ arbitrary under the assumption $\Bl(\eta,\eta)\ne 0$.

The proof of Theorem~\ref{theorem:main} uses higher order $\lt$-signatures, and has some intriguing aspects compared to previously known techniques.
We discuss this in more detail at the beginning of Section~\ref{section:iterated-satellites-infinite-rank}.

\begin{remark}[A question on topologically slice iterated satellite knots]
  It appears to be challenging to understand the difference between the smooth and topological iterated satellite $P$-filtrations~\eqref{equation:P-filtration}.
  Specifically, we ask the following question.
  Suppose that $P$ is a winding number zero non-constant satellite operator on the smooth knot concordance group $\cC^{\mathrm{sm}}$ such that $P(\cC^{\mathrm{sm}})$ contains only topologically slice knots.
  (There are many such $P$, e.g.,\ a Whitehead doubling operator.)
  Does the smooth graded quotient $\langle P^n(\cC^{\mathrm{sm}})\rangle/\langle P^{n+1}(\cC^{\mathrm{sm}})\rangle$ have infinite rank for all~$n$?
  Due to the technical limitations of currently known smooth invariants, even detecting a single example of $P$ for which the answer is affirmative does not appear to be straightforward.
  A potentially useful approach may be combining higher order $\lt$-signatures with gauge theory or other modern smooth tools, as in~\cite{Cha-Kim:2021-1,Cha:2021-1} which use $\lt$-signatures and Heegaard Floer homology.
\end{remark}

For the zeroth graded quotient $\cC/\langle P(\cC)\rangle$, we observe the following fact.

\begin{proposition}
  \label{proposition:zeroth-quotient}
  If $P$ has winding number zero, $\cC / \langle P(\cC)\rangle$ has infinite rank.
\end{proposition}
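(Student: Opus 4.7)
The plan is to use the classical Levine--Tristram signature function together with Litherland's formula for satellites. Litherland's formula states that for a pattern $P\subset S^1\times D^2$ of winding number $w$ and any companion $K$, the signature at $\omega\in S^1$ satisfies $\sigma_{P(K)}(\omega)=\sigma_{P(U)}(\omega)+\sigma_K(\omega^w)$. Specializing to $w=0$, the second term becomes $\sigma_K(1)=0$, so $\sigma_{P(K)}\equiv\sigma_{P(U)}$ as step functions on $S^1$. Since Levine--Tristram signatures are additive under connected sum and invariant under concordance, every class $J\in\langle P(\cC)\rangle$, written as a finite sum $J=\sum_i n_i[P(K_i)]$, satisfies
\[
\sigma_J(\omega)\;=\;\sum_i n_i\,\sigma_{P(K_i)}(\omega)\;=\;\Bigl(\sum_i n_i\Bigr)\sigma_{P(U)}(\omega),
\]
so $\sigma_J$ is an integer scalar multiple of the fixed function $\sigma_{P(U)}$.

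Next, I would exhibit an infinite family $\{J_k\}_{k\ge 1}\subset\cC$ whose classes are $\Z$-linearly independent in $\cC/\langle P(\cC)\rangle$. Let $\Omega\subset S^1$ be the finite set of roots of $\Delta_{P(U)}(t)$ on the unit circle; these are the only points at which $\sigma_{P(U)}$ can jump. Choose torus knots $J_k=T_{2,p_k}$ for an increasing sequence of odd primes $p_k$ so that each $\sigma_{J_k}$ acquires a nontrivial jump at some $\omega_k\in S^1$ with the $\omega_k$ pairwise distinct and $\omega_k\notin\Omega$. This is possible because the jump set of $\sigma_{T_{2,p}}$ consists of nontrivial $2p$-th roots of unity, which can be made disjoint from any prescribed finite set by taking $p$ sufficiently large.

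Finally, suppose $\sum_k m_k[J_k]\in\langle P(\cC)\rangle$ for some finitely supported integers $m_k$. By the observation above, $\sum_k m_k\sigma_{J_k}$ is an integer multiple of $\sigma_{P(U)}$, hence a step function whose only jumps lie in $\Omega$. On the other hand, at each $\omega_k$ the function $\sum_k m_k\sigma_{J_k}$ has a jump of size $\pm 2m_k$, and these jump locations lie outside $\Omega$. Comparing jumps forces every $m_k=0$, so the classes $[J_k]$ are $\Z$-linearly independent in $\cC/\langle P(\cC)\rangle$, proving the proposition.

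The argument is essentially elementary once Litherland's formula is invoked; the only real subtlety, which I regard as the main (and minor) obstacle, is the general-position choice of the family $J_k$ ensuring that its jumps are pairwise distinct and disjoint from $\Omega$. This combinatorial issue is handled by inductively taking $p_k$ large relative to $\Omega$ and the previously chosen $\omega_1,\ldots,\omega_{k-1}$.
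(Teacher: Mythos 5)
Your proof is correct and takes essentially the same approach as the paper: for winding-number-zero $P$, the Levine--Tristram signature function of $P(K)$ coincides with that of $P(U)$, so any element of $\langle P(\cC)\rangle$ has signature jumps only at the finitely many roots of $\Delta_{P(U)}$, and one then exhibits infinitely many knots whose jump locations avoid this set and each other. The only cosmetic difference is the source of the family: you use torus knots $T_{2,p_k}$ with disjoint primitive-root jump sets, whereas the paper invokes the Cha--Livingston realization lemma to produce knots with a single prescribed jump.
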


Note that Proposition~\ref{proposition:zeroth-quotient} does not require $\Bl(\eta,\eta)\ne 0$ (cf.\ Theorem~\ref{theorem:main}).

In~\cite[p.~135]{Hedden-Pinzon-Caicedo:2021-1}, Hedden and Pinz\'{o}n-Caicedo observed that the smooth case of Proposition~\ref{proposition:zeroth-quotient} holds, using a result of Wang~\cite{Wang:2016-1}.
Proposition~\ref{proposition:zeroth-quotient} generalizes it to the topological category.

\subsubsection*{Non-zero winding number case}

It is well known that there are many satellite operators with winding number~$\pm 1$ for which the $P$-filtration~\eqref{equation:P-filtration} does not have infinite rank graded quotients in general.
That is, the answer to Question~\ref{question:rank-iterated-satellite} is not affirmative.
The simplest example is the connected sum operator $P_J(K)=J\# K$, where $J$ is a knot.
It satisfies $(P_J)^n(\cC)=\cC$ for all~$n\ge 0$.
It is not even known whether or not a winding number~$\pm 1$ satellite operator acts bijectively on the topological knot concordance group.

When the winding number is neither zero nor $\pm 1$, we observe that Question~\ref{question:rank-iterated-satellite} always has an affirmative answer, as stated below.

\begin{proposition}
  \label{proposition:nonzero-winding-number}
    If $P$ has winding number $\ne 0$, $\pm1$, $\langle P^n(\cC)\rangle / \langle P^{n+1}(\cC)\rangle$ has infinite rank for all $n\ge 0$, in both smooth and topological categories.
\end{proposition}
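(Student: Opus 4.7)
The plan is to use the Levine--Tristram signature together with Litherland's satellite formula. Let $w$ be the winding number of $P$, so by hypothesis $|w|\ge 2$. Iterating Litherland's identity yields
\[
\sigma_{P^n(K)}(\omega)=\sigma_{P^n(U)}(\omega)+\sigma_K(\omega^{w^n})
\]
for every knot $K$ and every $\omega\in S^1$. Since $\sigma_K\colon S^1\to\Z$ is a bounded integer-valued step function whose $L^2$-equivalence class is a topological concordance invariant, its Fourier coefficients $\hat\sigma_K(k)=\int_{S^1}\sigma_K(\omega)\,\omega^{-k}\,d\omega$ are additive topological concordance invariants. Taking Fourier coefficients of the displayed identity gives $\hat\sigma_{P^n(K)}(k)=\hat\sigma_{P^n(U)}(k)+\hat\sigma_K(k/w^n)$ when $w^n\mid k$, and $\hat\sigma_{P^n(K)}(k)=\hat\sigma_{P^n(U)}(k)$ otherwise.

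The decisive observation is that when $w^n\mid k$ but $w^{n+1}\nmid k$, the coefficient $\hat\sigma_{P^{n+1}(L)}(k)=\hat\sigma_{P^{n+1}(U)}(k)$ is independent of $L$, whereas $\hat\sigma_{P^n(K)}(k)$ still carries the $K$-dependent term $\hat\sigma_K(k/w^n)$. Since $|w|\ge 2$, such indices abound: any $k=mw^n$ with $w\nmid m$ works. Consequently, if $\sum_i c_i P^n(K_i)=\sum_j d_j P^{n+1}(L_j)$ in $\cC$, then applying $\hat\sigma_{(\cdot)}$ at $k=mw^n$ for each integer $m$ with $w\nmid m$ produces
\[
\sum_i c_i\,\hat\sigma_{K_i}(m) \;=\; D\,\hat\sigma_{P^{n+1}(U)}(mw^n)-C\,\hat\sigma_{P^n(U)}(mw^n),
\]
where $C=\sum_i c_i$ and $D=\sum_j d_j$. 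Equivalently, the sequence $\sum_i c_i\hat\sigma_{K_i}$, viewed on the index set $\{m\in\Z:w\nmid m\}$, must lie in the fixed two-dimensional $\Q$-subspace spanned by the two ``axis'' sequences $m\mapsto\hat\sigma_{P^n(U)}(mw^n)$ and $m\mapsto\hat\sigma_{P^{n+1}(U)}(mw^n)$.

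It therefore suffices to produce an infinite family $\{K_i\}$ whose restricted Fourier sequences $\hat\sigma_{K_i}|_{\{m\in\Z:\,w\nmid m\}}$ are $\Q$-linearly independent modulo this two-dimensional subspace; any nontrivial $\Z$-combination $\sum c_i[P^n(K_i)]$ is then nonzero in $\langle P^n(\cC)\rangle/\langle P^{n+1}(\cC)\rangle$, giving the required infinite rank in both the smooth and topological categories. A concrete choice is an inductively selected sequence of $(2,q)$-torus knots $T_{2,q}$, with $q$ ranging over odd primes coprime to $w$: the signature function of $T_{2,q}$ is an explicit step function with jumps at primitive $2q$-th roots of unity, and for each sufficiently large $q$ one can isolate a Fourier index $m_q$ coprime to $w$ at which $\hat\sigma_{T_{2,q}}$ is nonzero while the analogous coefficients of the previously chosen knots and of the two axis sequences all vanish. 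The main obstacle is this final diagonalization, which reduces to elementary estimates on the support of $\hat\sigma_{T_{2,q}}$ using the arithmetic of $2q$-th roots of unity and the fact that only finitely many ``forbidden'' indices arise at each inductive stage.
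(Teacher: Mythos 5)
Your strategy is the same one the paper uses: iterate Litherland's formula, exploit the reparametrization of the companion's argument by powers of the winding number $w$, and deduce that a linear relation in $\langle P^n(\cC)\rangle/\langle P^{n+1}(\cC)\rangle$ forces a strong constraint on the signature-type data of the companion knots. The bookkeeping up through your displayed equation is correct: $\hat\sigma_K$ is an additive concordance invariant, the iterated Litherland identity holds, and the index arithmetic modulo $w^n$ and $w^{n+1}$ is right.

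The gap is in the final diagonalization. You assert that ``only finitely many `forbidden' indices arise at each inductive stage,'' but for Fourier coefficients this is false. A step function on $S^1$ with jumps of heights $c_j$ at angles $\theta_j$ has
\[
\hat\sigma(m)=\frac{1}{2\pi i m}\sum_j c_j e^{-im\theta_j}\qquad (m\ne 0),
\]
an exponential sum which is nonzero for almost every $m$. For $T_{2,q}$ one can check that $\hat\sigma_{T_{2,q}}(m)$ vanishes exactly when $q\mid m$, so the set of indices killing all previously chosen torus knots is $q_1\cdots q_k\,\Z$. But the two ``axis'' sequences $m\mapsto\hat\sigma_{P^n(U)}(mw^n)$ and $m\mapsto\hat\sigma_{P^{n+1}(U)}(mw^n)$ come from signature functions whose jump angles are arguments of Alexander roots and are generically not roots of unity; these Fourier coefficients are nonzero off a thin (possibly empty) set. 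There is therefore no reason an index $m_q$ with all the required vanishings exists, and the inductive step does not close as written. One could try to repair it by first spending two indices to eliminate the two-dimensional subspace and then running a Vandermonde-type argument, but that is real work, not an ``elementary estimate on the support.''

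The paper sidesteps this by using the signature \emph{jump} function $\delta_K(\theta)$ in place of $\sigma_K$. Litherland's formula has the same shape, $\delta_{P(K)}(\theta)=\delta_{P(U)}(\theta)+\delta_K(w\theta)$, but $\delta_K$ is supported on a \emph{finite} set of angles, and the Cha--Livingston realization lemma produces knots whose jump function is concentrated at a single, nearly arbitrary, angle. The diagonalization is then genuinely over a finite forbidden set at each stage, and the paper closes the argument by evaluating $\delta_K-af_n$ at two angles differing by $2\pi/w^{n+1}$, one where the result is nonzero and one where it vanishes, contradicting periodicity of period $2\pi/w^{n+1}$. If you swap your Fourier coefficients for the jump function, your index arithmetic carries over almost verbatim and the construction of the $K_i$ becomes a finite problem with a clean solution.
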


From this, it follows that the only remaining interesting case of Question~\ref{question:rank-iterated-satellite} is when the winding number is zero, which Theorem~\ref{theorem:main} addresses.

The authors could not find Propositions~\ref{proposition:zeroth-quotient} and~\ref{proposition:nonzero-winding-number} in the literature, so we give proofs in Section~\ref{section:non-zero-winding-number}.
The proofs use the classical knot signature function.

\subsubsection*{Iterated satellite operators which are not homomorphisms}

We also use the ideas and techniques of the proof of Theorem~\ref{theorem:main} to obtain more results on iterated satellite operators.
Our next result concerns a well-known conjecture of Hedden, which can be found in \cite[Conjecture~1]{Hedden-Pinzon-Caicedo:2021-1}.

\begin{conjecture}[Hedden]
  \label{conjecture:homomorphism}
  A satellite operator $P\colon \cC\to \cC$ is a homomorphism if and only if it is either zero, identity, or the involution induced by knot orientation reversal.
\end{conjecture}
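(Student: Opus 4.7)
The forward implication is immediate, as zero, the identity, and orientation reversal are all manifestly homomorphisms on~$\cC$. For the converse, my plan is to stratify by the winding number $w$ of the pattern $P$ and successively constrain $P(U)$, then $w$, then $P$ itself, using classical concordance invariants and then higher-order $\lt$-signatures.

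The first move is to feed Litherland's satellite formula $\sigma_\omega(P(K)) = \sigma_\omega(P(U)) + \sigma_{\omega^w}(K)$ into the homomorphism hypothesis. Additivity of $\sigma_\omega$ under connected sum applied to $P(K\# J) \sim P(K) \# P(J)$ forces $\sigma_\omega(P(U)) = 0$ for every $\omega \in S^1$, and a parallel Fox--Milnor computation forces the Alexander polynomial $\Delta_{P(U)}(t)$ to be a Fox--Milnor norm; hence $P(U)$ is algebraically slice, and the identity $\sigma_\omega \circ P = \sigma_{\omega^w}$ holds as homomorphisms $\cC \to \Z$. Supplementary classical obstructions (Arf, an appropriate Casson--Gordon refinement) can be layered on to further constrain the admissible pairs $(w, P(U))$.

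The plan then bifurcates by winding number. For $w = 0$, I would upgrade the signature argument to amenable $\lt$-signatures along the lines used to prove Theorem~\ref{theorem:main}: construct companions $K_1, K_2$ with controlled $\rhot$-values for a carefully chosen amenable representation of the fundamental group of the zero-surgery on $P(U)$, and use the nontrivial self-pairing $\Bl(\eta,\eta)$ to transfer the obstruction through the satellite operation, yielding $\rhot(P(K_1 \# K_2)) \ne \rhot(P(K_1)) + \rhot(P(K_2))$; the residual case $\Bl(\eta,\eta) = 0$ requires iterating the Blanchfield pairing on a higher solvable cover of $P(U)$. For $|w| = 1$, the classical invariants of $P(K)$ coincide with those of $K$ or of its reverse; an inductive argument over the Cochran--Orr--Teichner $(n)$-solvable filtration, using higher-order $\lt$-signatures, should force this coincidence to hold in~$\cC$. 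For $|w| \ge 2$, choosing a nontrivial $w$-th root of unity $\omega$ reduces the identity $\sigma_\omega \circ P = \sigma_{\omega^w}$ to $\sigma_\omega \circ P \equiv 0$, a severe restriction that, combined with parallel Casson--Gordon and first-order $\lt$-signature input, ought to yield a contradiction.

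The main obstacle is the subcase $w = 0$ with $\Bl(\eta, \eta) = 0$, together with the parts of the $|w| \ge 2$ analysis unreachable by classical invariants. In both settings, the mechanism driving the non-homomorphism obstruction of the present paper, namely that a nontrivial element in the Blanchfield form of $P(U)$ seeds a nonzero higher-order signature difference between $P(K \# J)$ and $P(K) \# P(J)$, genuinely degenerates. Overcoming this likely requires either a new higher-order invariant sensitive to iterated Blanchfield pairings on deeper solvable covers, or a smooth input along the lines of~\cite{Hedden-Pinzon-Caicedo:2021-1}. Accordingly the outline above is a program rather than a proof: it resolves Conjecture~\ref{conjecture:homomorphism} in the generic regime isolated by the hypotheses of Theorem~\ref{theorem:main}, and reduces the remaining cases to concrete technical questions in higher-order concordance.
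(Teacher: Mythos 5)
You have not proved the statement, and neither does the paper: Conjecture~\ref{conjecture:homomorphism} is stated in the paper as an open conjecture of Hedden, and the paper's contribution toward it is only the partial result Theorem~\ref{theorem:non-homomorphism} (with its generalization Theorem~\ref{theorem:non-homomorphism-generalization}), which rules out $P^n$ being a homomorphism precisely when $P$ has winding number zero and $\Bl(\eta,\eta)\ne 0$. Your own sketch concedes this at the end, calling itself a program rather than a proof, so as a proof of the conjecture it has a genuine and unavoidable gap. Concretely, the cases you defer are exactly the ones nobody knows how to handle: for $w=0$ with $\Bl(\eta,\eta)=0$ (for instance $P(U)$ with trivial Alexander polynomial, where every $P(K)$ is topologically slice by Freedman), all $\lt$-signature and Casson--Gordon style obstructions vanish identically, and ``iterating the Blanchfield pairing on a higher solvable cover'' is not an existing technique but the missing invariant itself; for $|w|=1$ you would need to show that a homomorphism whose classical invariants agree with the identity actually \emph{is} the identity (or reversal) on $\cC$, which no filtration-by-filtration induction currently achieves; and for $|w|\ge 2$ the phrase ``ought to yield a contradiction'' is not an argument --- indeed the known results here (\cite{Miller:2019-01}, \cite{Lidman-Miller-Pinzon-Caicedo:2022-1}) require extra hypotheses on branched covers and do not cover the general case.

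Where your plan does go through --- the winding number zero, $\Bl(\eta,\eta)\ne 0$ regime handled by amenable $\lt$-signatures --- it is essentially the paper's proof of Theorem~\ref{theorem:non-homomorphism}: one assumes $P^n(J\#J)=P^n(J)\#P^n(J)$ in $\cC$, builds the cobordism $W_0$ from a slice disk exterior, the standard cobordism $C$, and the satellite cobordisms $E^i_k$, and uses the nontriviality of $\Bl(\eta,\eta)$ (via the metabolizer/Blanchfield-bordism argument of Proposition~\ref{proposition:computation-of-rho(J)-non-homomorphism}) to produce a representation $\phi$ for which $\rhot(\partial W_0,\phi)>0$ while the Amenable Signature Theorem forces $S_G(W_0)=0$, a contradiction. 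So the honest assessment is: your classical-invariant preliminaries (sliceness of $P(U)$, vanishing of $\sigma_\omega(P(U))$, Fox--Milnor) are correct but far from sufficient, the tractable core of your program reproduces Theorem~\ref{theorem:non-homomorphism}, and the conjecture itself remains open; the proposal should be presented as evidence, not as a proof.
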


Several authors verified the smooth version of Conjecture~\ref{conjecture:homomorphism} for special cases: for satellite operators obtained from a certain family of 2-bridge links by Chen~\cite{Chen:2019-2}, and for satellite operators satisfying certain conditions on linking numbers in branched covers by Lidman, Miller and Pinz\'{o}n-Caicedo~\cite{Lidman-Miller-Pinzon-Caicedo:2022-1} (also see \cite{Cahn-Kjuchukova:2023-1,Johanningsmeier-Kim-Miller:2023-1}).
In the topological (and consequently smooth) category, Miller verified Conjecture~\ref{conjecture:homomorphism} when the first homology of the $p$-fold branched cover of $P(U)$ is nontrivial and generated by the lifts of the axis $\eta$ for some prime $p$ dividing the winding number~\cite[Theorem~A]{Miller:2019-01}\@.
Miller also verified Conjecture~\ref{conjecture:homomorphism} for infinitely many satellite operators $P$ such that $P(U)$ is slice and $P(\cC)$ lies arbitrarily deep in the Cochran--Orr--Teichner filtration~\cite[Theorem~B]{Miller:2019-01}\@.

We give the following new evidence for Conjecture~\ref{conjecture:homomorphism}.

\begin{theoremalpha}
  \label{theorem:non-homomorphism}
  Let $P\colon \cC \to \cC$ be a winding number zero satellite operator with axis~$\eta$. If $\Bl(\eta, \eta)\ne 0$, then $P^n\colon \cC\to \cC$ is not a homomorphism for all $n\ge 1$.
\end{theoremalpha}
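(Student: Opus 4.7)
The plan is to argue by contradiction, reusing the amenable $L^2$-signature machinery that underlies the proof of Theorem~\ref{theorem:main}. Suppose $P^n$ is a homomorphism for some $n \ge 1$. Then from $P^n(U) = P^n(U \# U) = 2 P^n(U)$ in $\cC$, the knot $P^n(U)$ must be slice, and the identity $P^n(K\#J) \sim_c P^n(K) \# P^n(J)$ holds for all knots $K$ and $J$.

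I would then select a test knot $J$ from the family of companions constructed in the proof of Theorem~\ref{theorem:main} that witnesses nontriviality modulo $\langle P^{n+1}(\cC)\rangle$, together with an amenable PTFA-type representation $\phi$ whose nontriviality on lifts of the axis $\eta$ at each level of the iterated satellite is secured by the hypothesis $\Bl(\eta,\eta) \ne 0$. Applying the satellite $L^2$-signature formula to both sides of $P^n(J\#J) \sim_c P^n(J) \# P^n(J)$, on the left, the single application of $P^n$ to the companion $J\#J$ contributes $\rhot(P^n(U),\phi_0)$ plus a term coming from $J\#J$ tracked through one lift of the axis in the iterated cover; on the right, additivity of $\rhot$ under connected sum splits $2\rhot(P^n(J),\phi')$ into two independent contributions of $J$, each carrying its own axis lift. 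Since $P^n(U)$ is slice, all terms of the form $\rhot(P^n(U),\cdot)$ vanish, and the identity collapses to a comparison of how a single companion $J\#J$ propagates through one axis lift versus how two copies of $J$ propagate through two independent lifts. Choosing $J$ with sufficiently flexible classical signature data, as in the proof of Theorem~\ref{theorem:main}, forces these contributions to disagree, yielding the desired contradiction.

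The main obstacle will be the precise coordination of the representations on both sides so that their $\rhot$-values admit compatible amenable $\Gamma$-type Cheeger--Gromov bounds and can be compared directly, while still producing a genuine discrepancy witnessed by the Blanchfield pairing. I expect the tower of metabelian representations from the proof of Theorem~\ref{theorem:main} to adapt to this setting, with the bookkeeping of two parallel lifts of $\eta$ on the connected-sum side being the technical core of the argument: the nontriviality of $\Bl(\eta,\eta)$ ensures that the signature contribution of these two lifts through independent copies of the iterated cover cannot be matched by the single lift that sees $J\#J$ on the satellite side.
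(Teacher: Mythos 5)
Your high-level plan is in the right spirit: the paper's proof does argue by showing that the knot $K = P^n(J)\# P^n(J)\# (-P^n(J\# J))$ is not slice, and it does reuse the amenable $L^2$-signature machinery from the proof of Theorem~\ref{theorem:main}. But your proposal has a genuine gap that prevents it from working as stated.

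First, the claimed simplification from sliceness of $P^n(U)$ misidentifies which $\rhot$-terms arise. When the cobordism $W_0$ is built by stacking the slice disk exterior, the standard cobordism $C$, and the satellite cobordisms $E_k^i$, the $\rhot$-contributions on $\partial W_0$ that need to be controlled are of the form $\rhot(M(P(U)),\phi_k)$ -- one copy of $M(P(U))$ at \emph{each level} $k=0,\dots,n-1$ of the iteration -- not $\rhot(M(P^n(U)),\cdot)$. These terms do not vanish; the paper handles them by the Cheeger--Gromov universal bound (Theorem~\ref{theorem:universal-bound}), choosing $J$ with $\rhot(M(J),\epsilon)>3nL$ to dominate them. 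Moreover, even for a slice knot, $\rhot$ does not vanish for arbitrary representations: the amenable signature theorem (Theorem~\ref{theorem:vanishing-rho-invariants}) requires the representation to extend over a solution with a meridian of infinite order, and the restrictions $\phi_k$ to the intermediate $M(P(U))$ components need not satisfy this. So "all terms of the form $\rhot(P^n(U),\cdot)$ vanish" is wrong on both grounds, and the argument does not collapse the way you suggest. (Notably, the paper's proof never uses the observation that $P^n(U)$ is slice under the homomorphism assumption; in fact, Remark~\ref{remark:non-homomorphism-generalization}(1) shows the stronger conclusion that $P^n$ is not a homomorphism even modulo $\cF_{n+1}$.)

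Second, and more importantly, the technical core that you say you "expect" to work out -- the "bookkeeping of two parallel lifts of $\eta$" -- is precisely where the real content lies, and your proposal does not identify the mechanism. The paper constructs the representation $\phi$ by modding out, at the metabelian level, by the submodule $A$ generated by the axis lift $\eta_{n-1}^0$ coming from the $-P^n(J\#J)$ factor; this forces $\phi_0$ to be trivial. The key lemma (inside Proposition~\ref{proposition:computation-of-rho(J)-non-homomorphism}) then uses the metabolizer $M$ of $\Bl_K$ to show that $\eta_{n-1}^1$ and $\eta_{n-1}^2$ cannot \emph{both} die modulo $M+A$: if $\eta^i - f_i(t)\eta^0 \in M$ for $i=1,2$, the metabolizer conditions force $g(t)$ to divide $1-f_1f_1^*$, $1-f_2f_2^*$, and $f_1f_1^*+f_2f_2^*$, whence $g(t)\mid 2$, contradicting $\deg g>0$ (which is exactly the $\Bl(\eta,\eta)\ne0$ hypothesis). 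This concrete factorization argument is what guarantees (H4): $\phi_0$ trivial but at least one of $\phi_1,\phi_2$ nontrivial. Without this step, there is no control ensuring a discrepancy between the two sides, and the proof does not close.
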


Theorem~\ref{theorem:non-homomorphism} generalizes the winding number zero case of~\cite[Theorem~A]{Miller:2019-01}, since the hypothesis on the lifts of $\eta$ in the latter implies the hypothesis $\Bl(\eta,\eta)\ne 0$ in the former.
Also, Theorem~\ref{theorem:non-homomorphism} readily implies~\cite[Theorem~B]{Miller:2019-01}.
In fact, \cite[Theorem~B]{Miller:2019-01} is obtained from a more technical result given in~\cite[Corollary~4.5]{Miller:2019-01}, which our result implies.
For a more detailed discussion, see Remark~\ref{remark:non-homomorphism-generalization}~(2).

\subsubsection*{Distinction of satellite operators}

We also address the following question which Cochran, Davis and Ray asked and discussed in~\cite[p.~963]{Cochran-Davis-Ray:2014-1}.

\begin{question}[\cite{Cochran-Davis-Ray:2014-1}]
  \label{question:distinct-satellite-operators}
  When do two patterns $P$, $Q\subset S^1\times D^2$ give distinct satellite operators $P$, $Q\colon \cC \to \cC$?
\end{question}

Regarding the case of iterated satellite operators, Franklin's result in~\cite[Theorem~3.1]{Franklin:2013-1} implies that $P^2$ and $Q^2$ are distinct on $\cC$ for certain winding number zero $P$, $Q\subset S^1\times D^2$ with $P(U)$, $Q(U)$ ribbon.

In the result below, we consider higher iterated satellite operators.

\begin{theoremalpha}
  \label{theorem:coprime-satellites}

  Suppose that $P\subset S^1\times D^2$ has winding number zero and $\Bl(\eta, \eta)$ is nontrivial for the axis $\eta$ of~$P$.
  For any winding number zero $Q\subset S^1\times D^2$ such that the Alexander polynomials of $P(U)$ and $Q(U)$ are relatively prime in $\Q[t^{\pm 1}]$, the operators $P^n$, $Q\colon \cC \to \cC$ are distinct for all $n\ge 0$.
\end{theoremalpha}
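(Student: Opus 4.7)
The plan is to argue by contradiction: assume that $P^n = Q$ as operators on~$\cC$, so that $J_K \coloneqq P^n(K) \csum (-Q(K))$ is slice for every knot~$K$, and derive a contradiction.

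The case $n = 0$ is immediate from Proposition~\ref{proposition:zeroth-quotient}: since $Q$ has winding number zero, $\langle Q(\cC)\rangle$ has infinite corank in~$\cC$, and in particular $Q$ is not $\id = P^0$. Equivalently, one can note that the Tristram--Levine signature satisfies $\sigma_\omega(Q(K)) = \sigma_\omega(Q(U))$ for every $K$ by the winding number zero satellite formula, while $\sigma_\omega$ takes arbitrarily large values on~$\cC$; this forces a genuine $K$ with $Q(K) \ne K$ in~$\cC$.

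For $n \ge 1$ I would recycle the construction underlying Theorem~\ref{theorem:main}: one produces an infection knot $K$ built from iterated infections along the axis $\eta$ of a seed with large classical signature, so that a depth-$n$ amenable $\lt$-signature $\sigmat_\phi(P^n(K))$, taken with respect to a coefficient system $\phi$ shaped by the iterated Blanchfield self-pairing of $\eta$, is nonzero and too large to be killed by any slice obstruction. The new ingredient is the coprimality hypothesis. Because $\Delta_{P(U)}$ and $\Delta_{Q(U)}$ are coprime in $\Q[t^{\pm 1}]$, and because the winding number zero satellite formula identifies the rational first-order Alexander modules of $P^n(K)$ and $Q(K)$ with those of $P(U)$ and $Q(U)$, these two modules are supported on disjoint primes of $\Q[t^{\pm 1}]$. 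Consequently the rational Blanchfield form of $J_K$ is the orthogonal sum of $\Bl_{P^n(K)}$ and $-\Bl_{Q(K)}$, and any metabolizer of $\Bl_{J_K}$ splits as a direct sum of metabolizers of the two summands. The coefficient system $\phi$ coming from the $P^n(K)$ side then extends to a coefficient system on the zero-surgery of $J_K$ which is identically trivial on the Alexander module of $Q(K)$; the $\lt$-signature vanishing theorem used in the proof of Theorem~\ref{theorem:main} now forces $0 = \sigmat_\phi(J_K) = \sigmat_\phi(P^n(K))$, contradicting the choice of~$K$.

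The main obstacle will be showing that this first-order splitting really does propagate up the derived series used to define~$\phi$, so that the $Q(K)$ side continues to contribute nothing to the depth-$n$ $\lt$-signature at every intermediate stage. This amounts to a primary-decomposition bookkeeping in modules over the iterated skew Laurent polynomial rings associated with the derived series, and should be a mild refinement of the localization technology already put in place for Theorem~\ref{theorem:main}; no new signature input is needed beyond what that argument already supplies.
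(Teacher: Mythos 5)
Your overall strategy is in the right spirit---contradict a hypothetical equality of operators by producing a knot $J$ with a large iterated $L^2$-signature on the $P$-side while using the coprimality of $\Delta_{P(U)}$ and $\Delta_{Q(U)}$ to prevent the $Q$-side from interfering---but the mechanism you propose for the separation is not the one the paper uses, and the part you flag as ``the main obstacle'' is precisely where the argument actually needs to be carried out; as written, the proposal is incomplete there.

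Specifically, the paper does \emph{not} argue via a splitting of the first-order metabolizer of $J_K$ and then attempt to propagate that splitting up the derived series. Instead it proceeds as follows. First it reduces to the case $P^n(U)=Q(U)$ and then shows $P^n(J)\ne Q(J)$ for a chosen seed $J$ with $\rhot(M(J),\epsilon)>nL_1+L_2$; the relevant slice knot is $K=P^n(J)\csum(-P^n(U))\csum(-Q(J))\csum Q(U)$. The $Q$-side is not handled by making $\phi$ literally trivial on $\pi_1(M(Q(J)))$; rather, one observes that $(-Q(J))\csum Q(U)$ is integrally $n$-solvable (Lemma~\ref{lemma:affine-inclusion} plus the subgroup property of $\cF_n$), caps it off inside the 4-manifold $W_0$ with an integral $n$-solution $Z$, and invokes the amenable signature theorem for $Z$ to kill $S_G(Z)$. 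The coprimality hypothesis then enters not through a metabolizer decomposition but through the localization $\Sigma=\{f\in\Q[t^{\pm1}]\mid f(1)\ne0,\ \gcd(f,\lambda(t)\lambda(t^{-1}))=1\}$ built into the definition of $\cP^2\Gamma$: because $H_1(Z;\Q[t^{\pm1}]\Sigma^{-1})$ is supported on $\Delta_{Q(U)}$, which is coprime to $\lambda=\Delta_{P(U)}$, the image of $H_1(Z)$ dies after localization, while the meridian $\mu_{n-1}$ (identified with the axis $\eta$, hence $\lambda(t)$-torsion) survives. This localization is what makes the ``splitting'' persist through the construction of $G=\pi_1(W_0)/\cP^{n+1}\pi_1(W_0)$, with the higher derived levels controlled by the Blanchfield-bordism machinery (Lemmas~\ref{lemma:lagrangian-bordism-extension}, \ref{lemma:C-and-E-injectivity}, \ref{lemma:integral-solution-lagrangian-bordism}, and the new Lemma~\ref{lemma:solution-extension-Blanchfield-bordism} needed to absorb $Z$).

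Two concrete gaps in your proposal, then. (1) You assert that a coefficient system ``coming from the $P^n(K)$ side'' and trivial on the Alexander module of $Q(K)$ extends over the zero-surgery of $J_K$ and, implicitly, over a slice disk exterior. That extension is not automatic: a coefficient system of the required depth must be built from $\pi_1$ of the entire bounding 4-manifold, not merely from the summand of the first-order Alexander module, and a metabolizer of $\Bl_{J_K}$ does not by itself supply such a $\phi$. (2) You have no replacement for the role of the integral $n$-solution $Z$; without it, the $Q$-side contribution to $\rhot(\partial W_0,\phi)$ and to $S_G(W_0)$ is not controlled, and triviality of $\phi$ on $M(Q(K))$ alone does not follow from (nor substitute for) the metabolizer splitting once one passes to higher derived quotients. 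Filling both of these amounts to essentially reconstructing the paper's argument via $Z$ and the $\Sigma$-localization, so the ``mild refinement'' you anticipate at the end is in fact the substance of the proof.

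Your $n=0$ case via Proposition~\ref{proposition:zeroth-quotient} is fine.
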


Note that it is an immediate consequence of Theorem~\ref{theorem:main} that if $P$ has winding number zero and $\Bl(\eta,\eta)$ is nontrivial for the axis $\eta$, then the operators $P^n$ $(n\ge 1)$ are mutually distinct.
From this and Theorem~\ref{theorem:coprime-satellites}, we readily obtain the following.

\begin{theorem-named}[Corollary to Theorems~\ref{theorem:main} and~\ref{theorem:coprime-satellites}]
  If $P$, $Q\colon \cC \to \cC$ are winding number zero satellite operators with axes $\eta$, $\zeta$ such that $\Bl(\eta,\eta)\ne 0$, $\Bl(\zeta, \zeta)\ne 0$ and the Alexander polynomials of $P(U)$ and $Q(U)$ are relatively prime, then the operators in the collection $\{P^n\}_{n\ge 0} \cup \{Q^m\}_{m\ge 0}$ are mutually distinct.
\end{theorem-named}

\subsubsection*{An application}

Finally, we remark that the techniques discussed in this paper appear to be useful for other applications as well.
As an example, we investigate concordance classes of knots with the same Seifert form.
It is Freedman's well-known result that a knot with trivial Alexander polynomial is topologically slice~\cite{Freedman:1984-1}.
It leads us to ask whether or not there is any other Alexander polynomial, or Seifert form, that determines a topological knot concordance class.
The main results of Livingston~\cite{Livingston:2002-1} and T.~Kim~\cite{Kim:2005-1} answer the question in the negative:
for each Seifert form $V$ with nontrivial Alexander polynomial, there are infinitely many pairwise non-concordant knots $K_i$ that have Seifert form~$V$.
In Appendix~\ref{section:slice-satellite-operators}, we give a quick proof of the following generalization using our method for satellite operators.

\begin{theorem-named}[Corollary~\ref{corollary:infinite-rank-fixed-Seifert-form}]
  Suppose that $V$ is a Seifert form of a knot $K$ with nontrivial Alexander polynomial.
  Then, there exist infinitely many knots $K_i$ with the same Seifert form $V$ which are linearly independent in the knot concordance group~$\cC$. 
\end{theorem-named}

Our proof directly constructs the desired knots in the form $K_i=P(J_i)$ by applying a single fixed satellite operator~$P$, while earlier methods in~\cite{Livingston:2002-1,Kim:2005-1} depend on more sophisticated constructions.

\subsubsection*{Organization of the paper}
In Section~\ref{section:non-zero-winding-number} we prove Propositions~\ref{proposition:zeroth-quotient} and~\ref{proposition:nonzero-winding-number}.
In Section~\ref{section:preliminary}, we quickly review $\lt$-signatures and their applications that give slice obstructions.
We prove Theorems~\ref{theorem:main}, \ref{theorem:non-homomorphism}, and \ref{theorem:coprime-satellites} in Sections~\ref{section:iterated-satellites-infinite-rank}, \ref{section:non-homomorphism}, and \ref{section:coprime-Alexander-polynomial}, respectively.
In Appendix~\ref{section:slice-satellite-operators}, which is independent of the rest of the paper, we discuss alternative proofs of certain special cases of Theorem~\ref{theorem:main} using a simple modification of the proof of \cite[Theorem~4.11]{Cha:2010-01}, which motivated our work. 

In this paper, homology groups $H_*(-)$ are with integer coefficients unless mentioned otherwise, and a curve denotes its homotopy and homology classes as well.

\section{Graded quotients and classical signature invariants}
\label{section:non-zero-winding-number}

In this section, we give proofs of Propositions~\ref{proposition:zeroth-quotient} and~\ref{proposition:nonzero-winding-number}.
Other sections of this paper are independent of this section.
Readers may skip to Section~\ref{section:preliminary} if they want to read the proofs of other results (e.g.\ Theorem~\ref{theorem:main}) first.

Recall that the \emph{Levine-Tristram signature function} of a knot $K$ is defined by
\[
  \sigma_K(\omega) = \sign\big((1-\omega)A + (1-\overline\omega)A^T\big)
\]
for $\omega\in S^1\subset \C$, where $A$ is a Seifert matrix of~$K$.
We will use the \emph{signature jump function} $\delta_K\colon \R\to \Z$ that is defined by
\[
  \delta_K(\theta) = \lim_{s \to \theta^+} \sigma_K(e^{s\sqrt{-1}}) - \lim_{s\to \theta^-}\sigma_K(e^{s\sqrt{-1}}).
\]
It is well-known that $\delta_K$ has period~$2\pi$, $\delta_K(\theta) = \delta_K(-\theta)$ and $\delta_K(\theta)\ne 0$ only if $e^{\theta\sqrt{-1}}$ is a zero of the Alexander polynomial of~$K$.
In addition, $\delta_\bullet(\theta)$ is invariant under concordance and additive under connected sum for each fixed~$\theta$.
That is, $\delta_\bullet(\theta)$ induces an abelian group homomorphism $\cC\to \Z$.
We will also use the following fact.

\begin{lemma}[{\cite{Cha-Livingston:2002-1}}]
  \label{lemma:signature-jump-realization}
  For any given $\theta_0\in (0,\pi)$ and $\epsilon\in(0,\theta_0)$, there exist $\theta_1\in (\theta_0-\epsilon,\theta_0)$ and a knot $K$ such that $\delta_K(\theta_1)\ne 0$ and $\delta_K=0$ on $[0,\pi]\smallsetminus \{\theta_1\}$.
\end{lemma}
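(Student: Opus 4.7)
The plan is to reduce the statement to a realization problem for signature jump functions and solve it using torus knots together with cancellation by connected sum. First I would observe that $\delta_K(\theta)$ can be nonzero only when $e^{i\theta}$ is a zero of the Alexander polynomial $\Delta_K$, because the Hermitian form $(1-\omega)A+(1-\overline\omega)A^T$ appearing in the definition of $\sigma_K$ is nonsingular — and thus has locally constant signature — on the complement of the $S^1$-zeros of $\Delta_K$. So the problem reduces to exhibiting a knot $K$ whose Alexander polynomial has a conjugate pair of simple $S^1$-zeros at $e^{\pm i\theta_1}$ for some $\theta_1\in(\theta_0-\epsilon,\theta_0)$, no other zeros on $S^1$, and a nonzero signature jump at $\theta_1$.

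Since the numbers $\pi(2i-1)/(2k+1)$, $1\le i\le k$, are dense in $(0,\pi)$ as $k$ varies, I would pick such a value $\theta_1\in(\theta_0-\epsilon,\theta_0)$. A direct source of knots with signature jumps at these angles is the family of $(2,2k+1)$-torus knots $T_k$: the Levine-Tristram signature function of $T_k$ jumps by $-2$ at exactly the angles $\theta_j:=\pi(2j-1)/(2k+1)$, $1\le j\le k$, and is locally constant elsewhere on $(0,\pi)$. Thus $T_k$ realizes the correct nonzero jump at $\theta_1$, at the cost of having $k-1$ unwanted jumps at the remaining $\theta_j$ with $j\ne i$.

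The next step is to cancel these unwanted jumps by a connected-sum construction. Since $\delta_{\bullet}(\theta)\colon\cC\to\Z$ is a concordance-invariant homomorphism for each fixed $\theta$, it suffices to exhibit an auxiliary knot $J$, built as a connected sum of suitably oriented torus knots, with $\delta_J(\theta_j)=+2$ for every $j\ne i$, $\delta_J(\theta_1)=0$, and $\delta_J$ vanishing elsewhere on $(0,\pi)$. Setting $K=T_k\# J$ then produces a knot whose jump function is nonzero only at $\theta_1$ in $[0,\pi]$, as required.

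The main obstacle I foresee is the feasibility of the cancellation in the third step: one must realize the prescribed $\Z$-valued pattern on the finite set $\{\theta_j\}_{j=1}^k$ as a $\Z$-linear combination of jump functions coming from the torus-knot family, while at the same time ensuring that no new $S^1$-zeros — and hence no new jumps — are introduced at angles outside this set. The combinatorial argument needed to pin this down, exploiting the arithmetic of the relevant cyclotomic angles, is the technical content of \cite{Cha-Livingston:2002-1}; the general realization theorem proved there for signature jump functions subsumes the present lemma as a special case.
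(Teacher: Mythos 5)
The paper gives no proof here; it simply cites the proof of Theorem~1 of \cite{Cha-Livingston:2002-1}. So the question is whether your sketch can actually be completed, and the cancellation step fails for a structural reason, not just for lack of detail. A $(2,q)$-torus knot has a jump at $\pi/5$ if and only if $q=5m$ with $m$ odd, and every such $T(2,5m)$ also has a jump at $3\pi/5$, of the same sign and size, since $m\pi/(5m)=\pi/5$ and $3m\pi/(5m)=3\pi/5$ both lie in its jump set $\{\pi(2j-1)/(5m)\}$. Thus in every $\Z$-linear combination of $\delta_{T(2,q)}$'s the values at $\pi/5$ and $3\pi/5$ coincide, so neither jump can be isolated. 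The same phenomenon occurs for each odd $b\ge 5$: the angles $\pi a/b$ with $a$ odd, $1\le a<b$, $\gcd(a,b)=1$ are permanently coupled, in that every $T(2,q)$ that jumps at one of them jumps at all of them with the same value. So the elimination you propose cannot be carried out inside the $(2,q)$-family, and deferring the ``combinatorial argument'' to the reference does not close the gap, because no such argument exists for this family.

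The cited construction in fact sidesteps cancellation altogether: for a dense set of $\theta_1$ one produces an integer palindromic polynomial $p(t)$ with $p(1)=\pm 1$ whose only zeros on $S^1$ are a simple conjugate pair $e^{\pm i\theta_1}$, and realizes $p$ as the Alexander polynomial of a knot with a nonzero jump at $\theta_1$. Your opening reduction — that $\delta_K$ jumps only at $S^1$-zeros of $\Delta_K$ — is the right starting point, but the right continuation is to manufacture such an Alexander polynomial with an isolated $S^1$-zero directly, rather than to begin from a torus knot and try to cancel its many other jumps.
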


It follows from~\cite[Proof of Theorem~1]{Cha-Livingston:2002-1}.
See also the first sentence of~\cite[Proof of Lemma~5.6]{Cha:2009-1}.

\begin{proof}[Proof of Proposition~\ref{proposition:zeroth-quotient}]
  Let $P$ be a satellite operator with winding number zero.
  Our goal is to show that the quotient $\cC / \langle P(\cC)\rangle$ has infinite rank.

  For brevity, let $R=P(U)$.
    Since the set $\{\theta\in (0,\pi)\mid \delta_R(\theta)\ne 0\}$
  is finite, one obtains the following fact by repeatedly applying Lemma~\ref{lemma:signature-jump-realization}:
  there exist a sequence of knots $K_1,K_2, \dots$ and a strictly decreasing sequence of real numbers $\phi_1, \phi_2, \ldots$ in $(0,\pi)$ such that on $(0,\pi)$ the function $\delta_{K_i}(\theta)$ is nonzero at and only at $\theta=\phi_i$, and $\delta_R(\phi_i)=0$ for all~$i$. 
  
  We will show that the knots $K_i$ are linearly independent in $\cC/\langle P(\cC)\rangle$.
  Suppose that a finite linear combination $K =\#_i\, a_i K_i$ ($a_i\in \Z$) of the $K_i$ lies in $\langle P(\cC)\rangle$.
  Note that for any knot $J$, the knots $P(J)$ and $R=P(U)$ have the same Seifert matrix since $P$ has winding number zero, and consequently $\delta_{P(J)} = \delta_R$.
  Since $K\in \langle P(\cC)\rangle$, it follows that $\delta_K = a\delta_R$ for some integer~$a$.
  In particular $\delta_K(\phi_i)=a\delta_R(\phi_i)=0$ by the choice of $\phi_i$.
  Since $\delta_{K_j}(\phi_i)=0$ for $i\ne j$, we have $a_i \delta_{K_i}(\phi_i) = \delta_K(\phi_i)=0$.
  Since $\delta_{K_i}(\phi_i) \ne 0$, it follows that $a_i=0$ for all~$i$.
  This completes the proof.
  \end{proof}

\begin{proof}[Proof of Proposition~\ref{proposition:nonzero-winding-number}]
  Let $P$ be a satellite operator with winding number $d\ne0,\pm 1$.
  Proposition~\ref{proposition:nonzero-winding-number} asserts that $\langle P^n(\cC)\rangle / \langle P^{n+1}(\cC)\rangle$ has infinite rank.
  
  Let $R=P(U)$.
  Suppose $d\ge 2$.
  The case $d\le -2$ can be proved similarly.

  By \cite[Theorem~2]{Litherland:1979-1} (see also \cite[Theorem~2.1]{Cha-Ko:2000-1}), the signature jump function of a satellite knot $P(K)$ is given by
  \begin{equation}\label{equation:reparametrization}
    \delta_{P(K)}(\theta)=\delta_R(\theta) + \delta_K(d\theta).
  \end{equation}
  By applying this repeatedly, we have
  \[
    \begin{aligned}
      \delta_{P^n(K)}(\theta) &= \delta_R(\theta)+\delta_{P^{n-1}(K)}(d\theta) \\
      &= \cdots = \delta_R(\theta)+\delta_R(d\theta)+\cdots+\delta_R(d^{n-1}\theta)+\delta_K(d^n\theta).
    \end{aligned}
  \]
  For brevity, define $f_n\colon \R\to \Z$ ($n\ge 0$) by 
  \[
  f_n(\theta) = \delta_R(\theta)+\delta_R(d\theta)+\cdots+\delta_R(d^n\theta)
  \]
  so that $\delta_{P^{n+1}(K)}(\theta)=f_n(\theta) + \delta_K(d^{n+1}\theta)$ for $n\ge 0$.

  The following observation on knots in $\langle P^{n+1}(\cC)\rangle$ will be useful.
  If $K\in \langle P^{n+1}(\cC)\rangle$, then $K$ is concordant to $\#_i\, a_i K_i$ for some $a_i\in \Z$ and $K_i\in P^{n+1}(\cC)$.
  So
  \begin{equation}
    \delta_K(\theta)-a f_n(\theta)=\sum_i a_i\delta_{K_i}(d^{n+1}\theta).
    \label{equation:iterated-sign}    
  \end{equation}
  where $a=\sum_i a_i\in\Z$.
  Since $\delta_{K_i}$ have period $2\pi$, it follows that the function $\delta_K - af_n\colon \R\to \Z$ has period $2\pi/d^{n+1}$. 

  Note that the set
  \[
  S=\{\theta\in (0,\pi)\mid f_n(\theta)\ne 0 \text{ or } f_{n-1}(\theta)\ne 0\}
  \]
  is finite.
  By using this and Lemma~\ref{lemma:signature-jump-realization} repeatedly, choose knots $J_1, J_2,\dots$ and real numbers $\phi_1, \phi_2, \dots$ such that
  \[
    \min(2\pi/d, \pi/2) > \phi_1>\phi_2>\cdots >0,
  \]
  $\phi_i/d^n\notin S$, $\phi_i/d^n+ 2\pi/d^{n+1}\notin S$, and on $(0,\pi)$ the function $\delta_{J_i}$ is nonzero at and only at~$\phi_i$.
  For later use, note that $\delta_{J_i}=0$ on $(0,3\pi/2)$ except at~$\phi_i$, since $0<\phi_i<\pi/2$, $\delta_{J_i}(\pi+\theta)=\delta_{J_i}(\pi-\theta)$ and $\delta_{J_i}(\pi)=0$.

  We claim that the knots $P^n(J_i)$ are linearly independent in $\langle P^n(\cC)\rangle / \langle P^{n+1}(\cC)\rangle$.
  Let 
  \[
  J=\#_i\, a_i P^n(J_i)
  \] 
  where $a_i$ are integers not all of which are zero.
  By the above observation, it suffices to show that $\delta_J-af_n$ does not have period $2\pi/d^{n+1}$ for any integer $a$.

    By~\eqref{equation:iterated-sign} and by our choices of $S$ and $\phi_i$, if $a_j\ne 0$ for some $j$, then we have
  \[
  \left(\delta_J-af_n\right)(\phi_j/d^n) = \sum_ia_i\delta_{J_i}(\phi_j) = a_j\delta_{J_j}(\phi_j)\ne 0.
  \]
  On the other hand, by~\eqref{equation:iterated-sign},
  \[
  \left(\delta_J-af_n\right)(\phi_j/d^n+2\pi/d^{n+1}) = \sum_ia_i\delta_{J_i}(\phi_j+2\pi/d).
  \]
  Since $d\ge 2$, our choice of $\phi_i$ and $\phi_j$ ensures that $\phi_i < 2\pi/d < \phi_j + 2\pi/d < 3\pi / 2$, and thus $\phi_i\ne \phi_j+2\pi/d$ for all $i$.
  So, by our choice of $J_i$, we have $\delta_{J_i}(\phi_j+2\pi/d)=0$ for all~$i$.
  Therefore $\left(\delta_J-af_n\right)(\phi_j/d^n+2\pi/d^{n+1})=0$.
  It follows that $\delta_J-a f_n$ does not have period $2\pi/d^{n+1}$ for any integer~$a$.
\end{proof}

\section{\texorpdfstring
  {Preliminaries on $L^2$-signature obstructions}
  {Preliminaries on L2-signature obstructions}}
\label{section:preliminary}

In this section, we review $L^2$-signatures and their applications as a sliceness obstruction.

Briefly, the definition of the invariants is as follows.
Let $M$ be a closed 3-manifold and~$G$ a group.
For a homomorphism $\phi\colon \pi_1(M)\to G$, the Cheeger--Gromov $L^2$ $\rhot$-invariant of $(M,\phi)$ is defined as follows.
It is known that there exist a compact 4-manifold $W$ and a group $\Gamma$ into which $G$ embeds such that $\partial W = M$ and $\pi_1(M) \to G\hookrightarrow \Gamma$ extends to a homomorphism $\psi\colon \pi_1(W)\to \Gamma$.
Let $\cN \Gamma$ be the group von Neumann algebra of~$\Gamma$.
It is a ring with involution that contains~$\Z\Gamma$, so that the $\cN\Gamma$-valued intersection form on the homology $H_2(W;\cN \Gamma)$ is defined.
The \emph{$L^2$-signature} $\sign_\Gamma^{(2)}(W)$ of the intersection form is defined as a real number.
Let $\sign (W)$ be the ordinary signature of~$W$.
Then the \emph{Cheeger--Gromov $\rhot$-invariant of $M$ associated with $\phi$} is defined by
\[
  \rhot(M,\phi) = \sign_\Gamma^{(2)}(W) - \sign(W).
\]
This is well-defined, independent of the choice of $\Gamma$ and~$W$.
We omit details but summarize properties of $L^2$-signatures and $\rhot$-invariants which we will use in this paper below.
For more about these, see, e.g.,\ \cite{Chang-Weinberger:2003-1,Cochran-Orr-Teichner:2003-1,Lueck-Schick:2003-1,Cochran-Teichner:2003-1,Harvey:2006-1,Cha:2014-2}.

We will use later the following $L^2$-version of the Novikov additivity to compute $\rhot(M,\phi)$.
Let $W_1$ and $W_2$ be compact 4-manifolds and $W=W_1\cup_M W_2$ where $M$ is a common boundary component of $W_1$ and~$W_2$.
Let $\phi\colon \pi_1(W)\to \Gamma$ be a homomorphism and view $W_1$ and $W_2$ as manifolds over $\Gamma$ by composing $\phi$ with inclusion-induced homomorphisms.
Then we have
\[
\sign_\Gamma^{(2)}(W) = \sign_\Gamma^{(2)}(W_1)+\sign_\Gamma^{(2)}(W_2).
\]
We note that it reduces to the usual non-$L^2$ Novikov additivity when $\Gamma$ is a trivial group.

It is known that $\rhot$-invariants over certain amenable groups give obstructions to topological sliceness~\cite{Cha-Orr:2009-01,Cha:2010-01}.
We will use the following version, which uses solvable groups that may have torsion.
To state it, denote by $M(K)$ the zero-framed surgery manifold of a knot~$K$.
Let $G^{(n)}$ be the \emph{$n$th derived subgroup} of a group~$G$, i.e.,\ $G^{(0)}=G$ and $G^{(n+1)}=[G^{(n)},G^{(n)}]$.
Also, we need two terms: an \emph{integral solution} bounded by $M(K)$, and groups in \emph{Strebel's class}. Those will be discussed below the statement. 

\begin{theorem}[{Amenable Signature Theorem \cite[Theorem~1.3]{Cha:2010-01}}]\label{theorem:vanishing-rho-invariants}
Let $n\ge 0$ be an integer.
Let $G$ be a group lying in Strebel's class $D(R)$ where $R$ is $\Q$ or $\Z_p$, $p$ prime, and $G^{(n+1)}=\{e\}$.
Suppose that $K$ is a knot admitting an integral $n.5$-solution $W$ bounded by~$M(K)$.
If a homomorphism $\phi\colon \pi_1(M(K))\to G$ extends to $\pi_1(W)$ and sends a meridian of $K$ to an infinite order element, then $\rhot(M(K)),\phi)=0$.
\end{theorem}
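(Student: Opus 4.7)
The plan is to use the definition of $\rhot$ with the given bounding pair: take $\Gamma=G$ and let $\psi=\phi$ extend $\phi$ as the hypothesis supplies, giving $\rhot(M(K),\phi) = \sign^{(2)}_G(W)-\sign(W)$. It then suffices to show $\sign(W)=0$ and $\sign^{(2)}_G(W)=0$. The vanishing of the ordinary signature is the easier half: an integral $n.5$-solution carries, in particular, a $0$-Lagrangian, i.e.\ a half-rank direct summand of $H_2(W;\Z)$ on which the integral intersection form vanishes identically, which already forces $\sign(W)=0$.

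For the $L^2$-signature, the plan is to promote the integral $n$-Lagrangian $L$ (together with its $n$-duals) to an $L^2$-Lagrangian for the $\cN G$-valued intersection form on $H_2(W;\cN G)$. Since $G^{(n+1)}=\{e\}$, the map $\phi$ factors as
\[
\pi_1(W) \longrightarrow \pi_1(W)/\pi_1(W)^{(n+1)} \longrightarrow G,
\]
so the $n$-Lagrangian lives in the homology with coefficients in a quotient through which $\phi$ factors, and tensoring along this tower carries $L$ into $H_2(W;\cN G)$. Isotropy of the image is immediate from naturality of the intersection pairing, so the substantive content is the dimension count: the image must have $\cN G$-dimension equal to half of $\dim_{\cN G}H_2(W;\cN G)$. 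This is where the hypothesis $G\in D(R)$ is essential, guaranteeing that the change of rings $\Z[\pi_1(W)/\pi_1(W)^{(n+1)}]\to \cN G$ is ``dimension preserving'' on the $n$-Lagrangian; in effect, groups in Strebel's class admit an Ore-type skew field of fractions into which $\cN G$ receives compatibly, allowing one to transfer integral rank computations to $L^2$-dimension computations. Once the $L^2$-Lagrangian is in place, standard Hermitian linear algebra over the semi-finite von Neumann algebra $\cN G$ yields $\sign^{(2)}_G(W)=0$.

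I expect the hardest part to be the inductive dimension count along the derived series. The group $G$ is solvable (because $G^{(n+1)}=\{e\}$) and amenable (since it lies in $D(R)$), but may have torsion, so Lück's classical approximation theorem for residually finite groups is unavailable; one needs instead the amenable analog of $L^2$-dimension theory, combined with a careful layer-by-layer analysis of the twisted chain complex of $W$ at each derived quotient $\pi_1(W)/\pi_1(W)^{(k)}$ for $k=1,\dots,n+1$. The hypothesis that the meridian of $K$ maps to an infinite-order element of $G$ enters precisely here: it ensures that $H_*(M(K);\cN G)$ has the expected small $L^2$-dimensions (in particular $H_0(M(K);\cN G)=0$), which is what makes the Mayer--Vietoris/long-exact-sequence argument propagate the half-rank property down through the tower without collapse. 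The integrality assumption on the $n.5$-solution, as opposed to a merely rational one, is what permits this argument to survive the presence of torsion in $G$.
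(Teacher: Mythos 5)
The paper does not prove this statement: it imports Theorem~\ref{theorem:vanishing-rho-invariants} verbatim as the ``Amenable Signature Theorem'' from \cite[Theorem~1.3]{Cha:2010-01}, with no proof (and only the remark afterward that $G$ is automatically amenable since it is solvable). There is therefore no ``paper's own proof'' to compare your argument against.

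That said, your outline is a reasonable reconstruction of the strategy used in \cite{Cha:2010-01}. The overall shape is right: use the bounding pair $(W,\phi)$ to express $\rhot(M(K),\phi)$ as the signature defect $\sign^{(2)}_G(W)-\sign(W)$, kill the ordinary signature by pushing the Lagrangian down to $\Z$-coefficients, and kill the $L^2$-signature by pushing the Lagrangian (coming from the $n.5$-solution data over $\Z[\pi_1 W/\pi_1 W^{(n+1)}]$) up to $\cN G$ via the factorization $\pi_1(W)\to \pi_1(W)/\pi_1(W)^{(n+1)}\to G$. The two places where you correctly locate the real work are (i) the dimension count showing the image is a half-rank $L^2$-Lagrangian, and (ii) the role of the infinite-order meridian in controlling the boundary $L^2$-homology. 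One small clarification on (i): the Strebel $D(R)$ hypothesis is used not because $RG$ literally has an Ore skew field received by $\cN G$ in general (for groups with torsion it need not), but because $D(R)$ membership gives the homological flatness/dimension-comparison properties needed to transfer rank estimates from the $\Z[\pi_1 W/\pi_1 W^{(k)}]$-coefficient homology to the $\cN G$-coefficient homology layer by layer; the amenability of $G$ is what makes the $\cN G$-dimension function well-behaved (additive on short exact sequences, etc.) in the absence of residual finiteness. Also, your reduction of $\sign(W)=0$ is correct, but the phrase ``half-rank direct summand of $H_2(W;\Z)$'' should really be ``half-rank totally isotropic subspace together with duals, which after tensoring with $\Q$ forces the nondegenerate part of the form to have signature zero'' -- the duals are what guarantee independence and nondegeneracy, and the intersection form on a $4$-manifold with boundary need not be unimodular.
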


We remark that $G$ is solvable in the above statement and thus $G$ is automatically amenable.

Briefly, for an integer $n$, a knot $K$ is \emph{integrally $n$-solvable} if $M(K)$ bounds an integral $n$-solution, where an \emph{integral $n$-solution} is a 4-manifold $W$ whose intersection form over $\Z[\pi_1W/\pi_1 W^{(n)}]$ admits a certain ``lagrangian'' with ``dual.''
The definition for $h=n.5$ is similar, involving both $\Z[\pi_1W/\pi_1 W^{(n)}]$ and $\Z[\pi_1W/\pi_1 W^{(n+1)}]$.
A slice knot is integrally $h$-solvable for all $h$ ($h=n$ or~$n.5$).
We omit details, which the reader can find in~\cite[Definition~3.1]{Cha:2010-01}.
Instead, we will use the following facts only, in addition to Theorem~\ref{theorem:vanishing-rho-invariants}.
Let $\cF_h\subset \cC$ be the set of concordance classes of integrally $h$-solvable knots.
It is known that $\cF_h$ is a subgroup of~$\cC$.

\begin{lemma}\label{lemma:construction-of-n-solvable-knots}
Let $P\colon \cC\to \cC$ be a satellite operator with axis~$\eta$. Let $n$ be a positive integer.
\begin{enumerate}
	\item If $P(U)\in \cF_n$ and $\eta \in \pi_1(S^3\sm P(U))^{(n)}$, then $\langle P(\cC)\rangle \subset \cF_n$.
	\item If $P$ has winding number zero and $P(U)$ is slice, then $\langle P^n(\cC)\rangle \subset \cF_n$ for all $n$. 
\end{enumerate}
\end{lemma}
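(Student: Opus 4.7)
The strategy is to build explicit integral $n$-solutions via the standard satellite (infection) cobordism, tracking the derived-series depth carefully at each stage.

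For part~(1), let $W$ be an integral $n$-solution of $P(U)$ and let $K\in\cC$. Push the axis $\eta\subset M(P(U))=\partial W$ slightly into $W$ with its zero framing, remove a tubular neighborhood, and glue in the standard $4$-dimensional infection piece built from the exterior $E_K$ of~$K$. The resulting $4$-manifold $V_K$ satisfies $\partial V_K=M(P(K))$; van~Kampen gives $\pi_1(V_K)\cong\pi_1(W)\ast_{\eta=\mu_K}\pi_1(E_K)$, while Mayer--Vietoris gives $H_2(V_K)\cong H_2(W)\oplus\Z\langle\Sigma\rangle$ where $\Sigma$ comes from a Seifert surface of~$K$ capped off in the infection piece. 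The hypothesis $\eta\in\pi_1(M(P(U)))^{(n)}$ forces $\mu_K=\eta\in\pi_1(V_K)^{(n)}$; since $\pi_1(E_K)$ is normally generated by $\mu_K$, its image in $\Gamma:=\pi_1(V_K)/\pi_1(V_K)^{(n)}$ is trivial. Hence $\Sigma$ lifts to the $\Gamma$-cover with trivial self-intersection and trivial pairing against $H_2(W;\Z\Gamma)$, so a lagrangian-with-duals for $W$ extends to one for $V_K$. Thus $P(K)\in\cF_n$, and since $\cF_n$ is a subgroup, $\langle P(\cC)\rangle\subset\cF_n$.

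For part~(2), I would induct on~$n$. The base case $n=1$ is immediate: winding number zero forces $[\eta]=0\in H_1(S^3\sm P(U))$, so $\eta\in\pi_1(S^3\sm P(U))^{(1)}$, and $P(U)$ slice gives $P(U)\in\cF_n$ for every~$n$, whence (1) yields the conclusion for $n=1$. For the inductive step, use the recursive identification $M(P^n(K))=(M(P(U))\sm\nu\eta)\cup E_{P^{n-1}(K)}$ to construct $V_n(K)$ with $\partial V_n(K)=M(P^n(K))$ by stacking $n$ copies of the slice disk exterior of $P(U)$ along successive axis tori and capping off with an infection piece built from $E_K$ along the innermost axis.

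The key depth-tracking observation is that, because each axis is null-homologous in its layer (winding number zero), the image of the fundamental group of the $(i{+}1)$st layer lies one derived level deeper in $\pi_1(V_n(K))$ than that of the $i$th layer; iterating, the innermost infection piece maps entirely into $\pi_1(V_n(K))^{(n)}$. Combined with each layer being part of a slice disk exterior (hence $n$-solvable for every~$n$), one checks that the lagrangians-with-duals of individual layers assemble into a lagrangian-with-duals for $V_n(K)$ over $\Z[\pi_1(V_n(K))/\pi_1(V_n(K))^{(n)}]$. This gives $P^n(K)\in\cF_n$, and the subgroup property yields $\langle P^n(\cC)\rangle\subset\cF_n$. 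The main obstacle will be the careful bookkeeping for the lagrangian-with-dual condition across the stacked infection pieces: one must verify that generators contributed by deeper layers lie progressively deeper in the derived series of $\pi_1(V_n(K))$ and that intersection numbers in the associated covers vanish or pair correctly. The essential commutator-calculus input is that the abelianization of each layer's fundamental group is cyclic, generated by the corresponding meridian (or axis), so a one-level depth-increase in that generator propagates to the entire layer's fundamental group.
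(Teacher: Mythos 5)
Your approach to part~(1) is the same one behind \cite[Proposition~3.1]{Cochran-Orr-Teichner:2004-1}, which the paper cites, but your accounting of the new second homology does not close. If you produce $V_K$ by gluing $W$ to the $0$-framed $2$-handlebody $D^4\cup(\text{2-handle on }K)$ along tubular neighborhoods of $\eta$ and a meridian of $K$ (which is what yields $\partial V_K = M(P(K))$ and your $H_2(V_K)\cong H_2(W)\oplus\Z\langle\Sigma\rangle$), then $\Sigma$ has self-intersection $0$ and, because it lies in a separate summand, pairs trivially with \emph{everything} in $H_2(V_K;\Z\Gamma)$ --- including any would-be dual. So $\Sigma$ is an extra lagrangian vector with no dual class, the rank of $H_2(V_K)$ is odd, and $V_K$ cannot satisfy the integral $n$-solution condition. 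This is precisely why the paper's proof records the parenthetical remark that \emph{every} knot is integrally $0$-solvable (stabilize a $\Z$-homology bounding $4$-manifold by $\pm\C P^2$'s to kill the signature): the piece you glue along $M(K)$ must be an integral $0$-solution $W_K$, whose own lagrangian-with-duals supplies the missing dual classes; these descend to $\Z\Gamma$-coefficients because the image of $\pi_1(W_K)$ in $\Gamma$ is a perfect subgroup (its abelianization is generated by $\mu_K=\eta\in\pi_1(V_K)^{(n)}$) of the solvable group $\Gamma$, hence trivial. In the non-integral Cochran--Orr--Teichner setting this same step would force $\Arf(K)=0$, a hypothesis the lemma does not grant you; using the integral notion is exactly what removes it, and your argument should invoke it explicitly.

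For part~(2) you take a genuinely different and substantially longer route than the paper, and it inherits the same issue at the innermost infection. The paper simply applies part~(1) to the operator $P^n$: its pattern $P^n(U)$ is slice (hence in $\cF_n$, since $P$ is well defined on concordance classes and $P(U)$ is slice), and its axis lies in $\pi_1(S^3\sm P^n(U))^{(n)}$ by \cite[Lemma~4.9]{Cha:2010-01} --- which is exactly the depth-increase-per-layer observation you identified. Once part~(1) is repaired, part~(2) is a two-line consequence; the layer-by-layer cobordism construction you sketch is essentially the one the paper reserves for the harder, quantitative arguments (Theorems~\ref{theorem:main}--\ref{theorem:coprime-satellites}), where it is actually needed.
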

\begin{proof}
Property~(1) follows from \cite[Proposition~3.1]{Cochran-Orr-Teichner:2004-1}; 
the proof of \cite[Proposition~3.1]{Cochran-Orr-Teichner:2004-1} still holds when ``$n$-solvable'' is replaced by ``integrally $n$-solvable.'' 
(Here, one uses that every knot $K$ is integrally 0-solvable: an integral 0-solution, which has signature zero, is obtained by taking a connected sum of $\pm \C P^2$'s with a 4-manifold $W$ bounded by $M(K)$ such that the inclusion induces $H_1(M(K))\cong H_1(W)$.)

The axis of $P^n$ lies in $\pi_1(S^3\sm P^n(U))^{(n)}$ (e.g., see \cite[Lemma~4.9]{Cha:2010-01}).
So, property~(2) follows from~(1). 
\end{proof}

\begin{remark}
  The notion of an $h$-solvable knot was first introduced by Cochran, Orr, and Teichner~\cite{Cochran-Orr-Teichner:2003-1}.
  The notion of \emph{integrally} $h$-solvable knots that we use here is a slightly modified version.
  An $h$-solvable knot is integrally $h$-solvable but the converse is not true in general.
  For instance, a knot is 0-solvable if and only if the Arf invariant is trivial \cite[Remark~8.2]{Cochran-Orr-Teichner:2003-1}, but every knot is integrally $0$-solvable;
  see the proof of Lemma~\ref{lemma:construction-of-n-solvable-knots} above.
  This difference might look technical but is useful in our arguments.
  We note that known $L^2$-signature obstructions to $h$-solvablity in \cite{Cochran-Orr-Teichner:2003-1} and subsequent literature are obstructions to integral $h$-solvablity.
\end{remark}

Strebel's class $D(R)$ is defined in \cite{Strebel:1974-1} (see also \cite{Cha-Orr:2009-01}). We need only the following lemma that gives a class of groups in Strebel's class to which Theorem~\ref{theorem:vanishing-rho-invariants} applies.

\begin{lemma}\label{lemma:amenable-D(R)-group} Let $G$ be a solvable group with a subnormal series
\[
\{e\}=G_n \subset G_{n-1}\subset \cdots \subset G_1\subset G_0=G.
\]
\begin{enumerate}
	\item If every graded quotient $G_i/G_{i+1}$ is torsion-free abelian (i.e.,\ $G$ is poly-torsion-free-abelian), then $G$ lies in $D(\Q)$~\cite[p.~305]{Strebel:1974-1}.
	\item Let $p$ be a prime.
  If every graded quotient $G_i/G_{i+1}$ is abelian and has no torsion coprime to $p$ (i.e.,\ elements in $G_i/G_{i+1}$ have order a power of $p$ or~$\infty$), then $G$ lies in $D(\Z_p)$~\cite[Lemma~6.8]{Cha-Orr:2009-01}.
\end{enumerate}
\end{lemma}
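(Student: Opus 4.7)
The plan is to reduce both parts of the lemma to two well-established facts about Strebel's class $D(R)$: first, the closure of $D(R)$ under group extensions, and second, the assertion that a single abelian group satisfying the appropriate torsion hypothesis already lies in $D(R)$. Granted these two ingredients, the lemma is a routine induction on the length~$n$ of the subnormal series.

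For the base case $n=1$ of part~(1), I would verify that a torsion-free abelian group lies in $D(\Q)$. This is essentially Strebel's original observation \cite[p.~305]{Strebel:1974-1}: the group ring of a torsion-free abelian group over $\Q$ embeds into an Ore domain, and the defining homological condition of $D(\Q)$ can be checked directly from this. The corresponding base case of part~(2), namely that an abelian group whose torsion consists of elements of $p$-power order lies in $D(\Z_p)$, is the content of \cite[Lemma~6.8]{Cha-Orr:2009-01}, where the key point is that the relevant Ore-type condition persists after reducing coefficients modulo~$p$.

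For the inductive step, assume the conclusion for all subnormal series of length less than~$n$. The short exact sequence
\[
1 \to G_{n-1} \to G \to G/G_{n-1} \to 1
\]
exhibits $G$ as an extension. The kernel $G_{n-1}=G_{n-1}/G_n$ is a single abelian group satisfying the torsion hypothesis, so it lies in $D(R)$ by the base case. The quotient $G/G_{n-1}$ inherits the subnormal series $\{e\}\subset G_{n-2}/G_{n-1}\subset\cdots\subset G_0/G_{n-1}=G/G_{n-1}$ of length $n-1$ with the same abelian graded quotients $G_i/G_{i+1}$, so $G/G_{n-1}\in D(R)$ by the inductive hypothesis. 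Closure of $D(R)$ under extensions then gives $G\in D(R)$.

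The main obstacle is the base case, which requires unpacking Strebel's somewhat technical definition of $D(R)$ and performing a concrete computation with the group ring of an abelian group; the inductive step is purely formal once extension-closure is granted. Since both base cases and the extension-closure property are proved in the cited references, my plan is to present Lemma~\ref{lemma:amenable-D(R)-group} essentially as a corollary of \cite{Strebel:1974-1} and \cite[Lemma~6.8]{Cha-Orr:2009-01}, with the brief inductive argument above serving as the bridge between the single-quotient base cases and the general subnormal series.
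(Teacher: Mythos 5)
Your overall strategy---reduce to an abelian base case and extension-closure of $D(R)$, then induct on the length of the series---is the right one, and is presumably what underlies the cited results (the paper itself gives no proof beyond pointing at Strebel and Cha--Orr). However, your inductive step has a genuine gap: you quotient by $G_{n-1}$ and write the short exact sequence $1 \to G_{n-1} \to G \to G/G_{n-1} \to 1$, but in a \emph{subnormal} series $G_{n-1}$ is only required to be normal in $G_{n-2}$, not in~$G$, so $G/G_{n-1}$ need not be a group and that sequence need not exist. Worse, your description of the inherited series on $G/G_{n-1}$ uses $G_i/G_{n-1}$ for all~$i$, which requires $G_{n-1}\triangleleft G_i$ for every $i$, again not guaranteed.

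The fix is to induct from the top instead of the bottom. Use the extension $1 \to G_1 \to G \to G/G_1 \to 1$, which does exist since $G_1\triangleleft G_0=G$. The quotient $G/G_1=G_0/G_1$ is a single abelian group satisfying the torsion hypothesis, hence lies in $D(R)$ by the base case; the kernel $G_1$ inherits the tail $\{e\}=G_n\triangleleft G_{n-1}\triangleleft\cdots\triangleleft G_1$ of the subnormal series---a series of length $n-1$ with the same graded quotients---so $G_1\in D(R)$ by the inductive hypothesis; extension-closure then gives $G\in D(R)$. With this reversal the argument is sound, modulo the abelian base cases and extension-closure, which you correctly defer to Strebel and to \cite[Lemma~6.8]{Cha-Orr:2009-01}.
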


The following lemma states some known properties of the $\rhot$-invariant that we will use in this paper.
For proofs, see \cite[Lemma~5.4]{Cochran-Orr-Teichner:2003-1}, \cite[Section~2]{Cochran-Orr-Teichner:2004-1}, and \cite[Lemma~8.7]{Cha-Orr:2009-01}.
Recall that $\sigma_K$ denotes the Levine-Tristram signature function for a knot~$K$.

\begin{lemma}
  \label{lemma:properties-of-rho-invariants}
    \begin{enumerate}
    \item If $i\colon G\to \Gamma$ is a monomorphism, $\rhot(M, \phi)=\rhot(M,i\circ \phi)$.
    \item If $\phi$ is a trivial homomorphism, $\rhot(M,\phi)=0$.
    \item $\rhot(-M,\phi)=-\rhot(M,\phi)$.
    \item If $\phi\colon \pi_1(M(K))\to \Z$ is surjective, then 
    \[
    \rhot(M(K), \phi) = \int_{S^1}\sigma_K(\omega) \, d\omega
    \] 
    where $S^1$ is the unit circle in $\C$ and the integral is normalized so that $\int_{S^1} d\omega = 1$.
    \item If $\phi\colon \pi_1(M(K))\to \Z_p$ is surjective, then 
    \[
    \rhot(M(K),\phi)=\frac1p \sum_{i=0}^{p-1}\sigma_K(\omega^i)
    \] 
    where $\omega=\textup{exp}(2\pi\sqrt{-1}/p)\in \C$.
  \end{enumerate}
\end{lemma}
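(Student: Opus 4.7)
My plan is to assemble these facts from the standard literature on Cheeger--Gromov $\rhot$-invariants; each item is essentially formal once one fixes the bounding 4-manifold $W$ used in the definition. The key observation underlying the whole argument is that $\rhot(M,\phi)$ does not depend on the particular choices of $\Gamma$ and~$W$ satisfying the extension hypothesis, so we are free to make convenient choices in each item.

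For item~(1), given a monomorphism $i\colon G\hookrightarrow \Gamma$, any 4-manifold $W$ with $\partial W=M$ and an extension $\psi\colon \pi_1(W)\to \Gamma'$ of $\phi$ to some overgroup~$\Gamma'$ of $G$ also serves as an admissible bounding pair for $i\circ\phi$ by composing with a further embedding $\Gamma'\hookrightarrow \Gamma''$ that contains~$\Gamma$; the $L^2$-signature is invariant under such inclusions by the standard change-of-groups principle for the $\cN\Gamma$-valued intersection form, so the two $\rhot$-values coincide. For item~(2), when $\phi$ is trivial we may extend by the trivial map to $\pi_1(W)\to \Gamma$ for any~$W$ with $\partial W=M$; then $H_2(W;\cN\Gamma)\cong H_2(W)\otimes \cN\Gamma$ and the $L^2$-intersection form is obtained from the ordinary one by tensoring with~$\cN\Gamma$, so $\sign_\Gamma^{(2)}(W)=\sign(W)$ and hence $\rhot(M,\phi)=0$. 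Item~(3) is immediate from the definition: reversing the orientation of~$M$ allows us to use $-W$ in place of~$W$, which negates both $\sign_\Gamma^{(2)}(W)$ and $\sign(W)$.

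The substantive items are (4) and~(5), where I would appeal to the classical identification of $\rhot$ under abelian representations with averages/integrals of the Levine--Tristram signature function. For item~(4), if $\phi\colon \pi_1(M(K))\to\Z$ is surjective, we bound $M(K)$ by a 4-manifold $W$ whose fundamental group maps to~$\Z$ (for instance, the trace of zero-surgery on~$K$ in $D^4$, capped off appropriately); diagonalizing the $L^2$-intersection form on $H_2(W;\cN\Z)\cong H_2(W;L^2(S^1))$ via Fourier transform converts the computation into the fiberwise signature of $(1-\omega)A+(1-\ol\omega)A^T$ integrated over~$S^1$, which is precisely $\int_{S^1}\sigma_K(\omega)\,d\omega$. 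Item~(5) is obtained by the analogous finite-dimensional calculation, where $\cN\Z_p=\C[\Z_p]$ decomposes as a sum of $1$-dimensional character spaces indexed by the $p$-th roots of unity, and the $L^2$-signature becomes the averaged sum $\frac1p\sum_{i=0}^{p-1}\sigma_K(\omega^i)$.

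The only subtle point is making the change-of-groups argument in item~(1) precise, since it requires knowing that the $\cN\Gamma$-dimension function and signature are unchanged under induction along an inclusion of groups preserving amenability; this is standard but needs the Lück--Schick framework, so it is the step where I would cite rather than reprove.
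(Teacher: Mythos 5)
Your sketches for items (1)--(3) and (5) are correct, and the paper itself proves nothing here — it simply cites [Cochran--Orr--Teichner 2003, Lemma 5.4], [Cochran--Orr--Teichner 2004, Section 2], and [Cha--Orr 2009, Lemma 8.7], which is exactly where the arguments you outline can be found. The one concrete slip is in item~(4): the $0$-trace $X_0(K) = D^4 \cup_K h^2$ has \emph{trivial} fundamental group (attaching a $2$-handle along $K$ kills the normally-generating meridian), so the surjection $\pi_1(M(K)) \to \Z$ does not extend over it, and ``capped off appropriately'' does not repair this since $X_0(K)$ already has $\partial X_0(K) = M(K)$. The bounding $4$-manifold one actually uses is different: either take any $W$ with $\partial W = M(K)$ and $\pi_1(W) \cong \Z$, which exists because $\Omega_3(B\Z)=0$, or use the classical construction from a pushed-in Seifert surface, whose $\cN\Z$-intersection form diagonalizes fiberwise to $(1-\omega)A + (1-\ol{\omega})A^T$ and gives the integral formula directly. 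With that correction the rest of your argument, including the Fourier decomposition and the $\Z_p$ analogue in~(5), is sound.
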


For a satellite operator $P$ and a knot $K$, we will need to estimate the $\rhot$-invariants of the 3-manifold~$M(P(U))$.
The following result of Cheeger and Gromov gives a bound of the $\rhot$-invariants of a fixed 3-manifold.

\begin{theorem}[\cite{Cheeger-Gromov:1985-1}]
  \label{theorem:universal-bound}
  For each closed 3-manifold $M$, there exists a constant $C_M$ such that $|\rhot(M,\phi)|\le C_M$ for all homomorphisms $\phi$ of $\pi_1(M)$ to any group.
\end{theorem}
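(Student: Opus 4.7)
The plan is to use the analytic description of $\rhot(M,\phi)$ as a relative $L^2$ eta invariant and bound it purely in terms of the geometry of~$M$. By Lemma~\ref{lemma:properties-of-rho-invariants}(1), I may replace $G$ by $\phi(\pi_1(M))$ and assume $\phi$ is surjective, so $\phi$ is equivalent to the regular cover $\pi\colon \tilde M \to M$ whose deck group is~$G$. I would fix once and for all a Riemannian metric~$g$ on~$M$, lift it to $\tilde g = \pi^*g$, and let $D$ and $\tilde D$ denote the odd signature operators on $(M,g)$ and $(\tilde M,\tilde g)$. By the $L^2$ index theory of Atiyah together with its Atiyah--Patodi--Singer refinement, one has
\[
\rhot(M,\phi) = \eta^{(2)}(\tilde D) - \eta(D),
\]
where $\eta^{(2)}(\tilde D)$ is defined using the von Neumann trace associated with the deck action, and $\eta(D)$ is the classical APS eta invariant.

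The core task is then to bound $|\eta^{(2)}(\tilde D) - \eta(D)|$ by a constant depending only on $(M,g)$. Using the Mellin representation
\[
\eta(D) = \frac{1}{\sqrt{\pi}} \int_0^\infty t^{-1/2} \operatorname{Tr}\bigl(D e^{-tD^2}\bigr)\, dt,
\]
together with its $L^2$ analogue upstairs, I would unfold the von Neumann trace as an integral of the pointwise heat-kernel trace $\operatorname{tr}\bigl(e^{-t\tilde D^2}(\tilde x,\tilde x)\bigr)$ over a fundamental domain. Because $\pi$ is a local isometry, the on-diagonal short-time heat asymptotics on $M$ and on $\tilde M$ agree to all orders, so the integrand is of order $O(e^{-c/t})$ as $t\to 0^+$, uniformly in the cover. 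The long-time contribution would be controlled by a spectral/Sobolev estimate on $\tilde M$ that uses only the bounded local geometry (curvature, injectivity radius, volume of small balls) inherited from~$(M,g)$.

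Putting the two regimes together yields $|\rhot(M,\phi)|\le C(M,g)$ with no dependence on $\phi$ or on~$G$, and I would set $C_M := C(M,g)$. The main obstacle is the large-$t$ estimate: the low-lying spectrum of~$\tilde D$ can vary delicately with the cover, and a direct comparison of heat traces fails without additional input. The key point, due to Cheeger and Gromov, is that the Sobolev constants on $\tilde M$ can be controlled solely in terms of the geometry of $M$, since every point of $\tilde M$ has an isometric neighborhood downstairs; this is what makes the long-time bound uniform across all homomorphisms~$\phi$.
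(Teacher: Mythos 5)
The paper does not prove this statement: it is quoted verbatim from Cheeger--Gromov \cite{Cheeger-Gromov:1985-1}, and a quantitative, topological re-proof (with explicit constant $C_{M(K)} = 7\cdot 10^7\cdot c(K)$) is cited from \cite{Cha:2014-2}. So there is nothing in this paper to compare your argument against; what follows is a comparison with the two arguments in the literature.

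Your sketch is in the spirit of the original Cheeger--Gromov analytic approach (heat kernel / Mellin representation of the relative $L^2$-eta invariant), and you have correctly identified the short-time regime as the unproblematic one: on-diagonal heat asymptotics are local, $\pi$ is a local isometry, so the $t\to 0^+$ contributions to $\eta^{(2)}(\tilde D)$ and $\eta(D)$ cancel to all orders. But the resolution you propose for the long-time regime does not work. Bounded local geometry and Sobolev constants on $\tilde M$ control the on-diagonal heat kernel, hence give $\operatorname{Tr}_\Gamma\bigl(e^{-t\tilde D^2}\bigr)\le C$ uniformly for $t\ge 1$; combined with the operator-norm bound $\|\tilde D e^{-t\tilde D^2/2}\|\le (et)^{-1/2}$ this yields only
\[
\bigl|\operatorname{Tr}_\Gamma\bigl(\tilde D e^{-t\tilde D^2}\bigr)\bigr|\;\le\; C\,t^{-1/2},
\]
and the resulting tail $\int_1^\infty t^{-1/2}\cdot t^{-1/2}\,dt$ diverges. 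The actual decay rate of $\operatorname{Tr}_\Gamma(\tilde D e^{-t\tilde D^2})$ as $t\to\infty$ is governed by the von Neumann spectral density of $\tilde D$ near $0$ (Novikov--Shubin type data), which is \emph{not} a local invariant and is not controlled by the geometry of the compact base; controlling it is precisely the hard part of the Cheeger--Gromov argument, and it is handled there by a more delicate argument than the one you indicate. In short, your "key point" step is where the real proof lives, and the mechanism you name (uniform Sobolev constants on the cover) is not sufficient by itself.

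It is also worth knowing that the proof this paper actually relies on for an explicit constant is entirely different in flavor: in \cite{Cha:2014-2} one constructs, for a fixed $M$, a compact $4$-manifold $W$ with $\partial W = M$ such that any $\phi$ extends after enlarging the target to a suitable group, and such that $b_2(W)$ is bounded purely in terms of the topology of~$M$ (e.g.\ a triangulation complexity). Then $|\rhot(M,\phi)| = |\sign^{(2)}_\Gamma(W)-\sign(W)| \le 2\,b_2(W)$, with no analysis at all. That route avoids the convergence/decay issue entirely, whereas your route has to face it head on.
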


Several constructions of knots in later sections depend on the constant~$C_M$.
One can make those explicit by using the following quantitative estimate:
when $M=M(K)$ is the zero-framed surgery manifold of a knot $K$ with crossing number $c(K)$, Theorem~\ref{theorem:universal-bound} holds for $C_{M} = 7\cdot 10^7 \cdot c(K)$~\cite{Cha:2014-2}.

\section{Iterated satellite operators of infinite rank}
\label{section:iterated-satellites-infinite-rank}

In this section, we prove Theorem~\ref{theorem:main}\@, which asserts the following:
let $P\colon \cC\to \cC$ be a winding number zero satellite operator with axis $\eta$.
Suppose that $\Bl(\eta,\eta)$ is nontrivial, where $\Bl$ is the Blanchfield form of~$P(U)$.
Then, for each $n\ge 1$, the quotient $\langle P^n(\cC)\rangle/\langle P^{n+1}(\cC)\rangle$ has infinite rank.

In the proof, we use $\lt$-signature invariants and invoke the amenable signature theorem (Theorem~\ref{theorem:vanishing-rho-invariants}).
Our method has a distinct aspect compared with previously known techniques.
In $\lt$-signature arguments in the literature, to prove the linear independence of satellite knots of the form $P^n(K)$ for fixed $P^n$, it was needed to assume that \emph{$P(U)$ is slice}; without this, it seemed infeasible to compute and control the involved $\lt$-signatures. 

In our case, $P(U)$ is not slice in general.
Because of the above issue, we are unable to directly present an infinite family of knots of the form $P^n(K)$ which is linearly independent in $\langle P^n(\cC)\rangle/\langle P^{n+1}(\cC)\rangle$ (or even in $\cC$).
Instead, in this section, we develop an improved $\lt$-signature technique that works for $P$ with $P(U)$ non-slice, to prove the following:
for each $m\ge 1$, there are knots $K_1$,~\dots, $K_m$ such that $P^n(K_1)$,~\dots, $P^n(K_m)$ are linearly independent in $\langle P^n(\cC)\rangle/\langle P^{n+1}(\cC)\rangle$.

We remark that \emph{the knots $K_i$ depend on the value of~$m$.}
Allowing this dependency is crucial in our arguments, as seen in the proof of Proposition~\ref{proposition:infinite-rank-iteration} below.
(The step where the choice of $K_i$ depends on $m$ is Lemma~\ref{lemma:knots-J_0^i}.)
Despite the dependency on~$m$, it follows that $\langle P^n(\cC)\rangle/\langle P^{n+1}(\cC)\rangle$ has a subgroup of rank $m$ for each $m\ge 1$, so that one concludes that the rank is infinite.

We note that the following problem naturally arises, regarding an explicit generator set:
for any winding number zero non-constant satellite operator $P$, exhibit a sequence of explicit knots $K_1, K_2, \ldots$ such that $P^n(K_1), P^n(K_2), \ldots$ are linearly independent in $\langle P^n(\cC)\rangle/\langle P^{n+1}(\cC)\rangle$.

\begin{remark} \label{remark:nonvanishing-of-eta}
  The nontriviality condition on the Blanchfield pairing in Theorem~\ref{theorem:main} is satisfied in many cases. Since $\Bl$ is nonsingular, $\Bl(\eta,\eta)\ne 0$ if $\eta$ generates $H_1(S^3 \sm~P(U);\Q[t^{\pm 1}])$. 
  For a prime power~$q$, let $\Sigma_q(P(U))$ denote the $q$-fold cyclic cover of $S^3$ branched over $P(U)$ and let $\eta_i \in H_1(\Sigma_q(P(U)))$, $1\le i\le q$, be the homology classes of the $q$ lifts of $\eta$ to $\Sigma_q(P(U))$.
  Let $\lk$ be the $\Q/\Z$-valued linking form on $H_1(\Sigma_q(P(U)))$.
  It is well known that for some prime power $q$, if $\lk(\eta_i,\eta_j)$ is nontrivial for some $i$ and $j$, then $\Bl(\eta,\eta)$ is nontrivial (e.g.\ see \cite[Subsection~2.6]{Friedl:2003-7}).
  In particular, if the group $H_1(\Sigma_q(P(U)))$ is nontrivial and generated by $\eta_1,\ldots,\eta_q$, then $\Bl(\eta,\eta)$ is nontrivial. We also note the following: one way of getting a satellite operator from a given knot $\tilde{P}$ in $S^3$ is to choose a curve $\eta$ in $S^3\sm \tilde{P}$ which is unknotted in $S^3$, and take the exterior of $\eta$ in $S^3$. Now, if $H_1(S^3 \sm \tilde{P};\Q[t^{\pm 1}])$ is nontrivial, then one can see that there always exists a curve $\eta$ with $\lk(\eta,\tilde{P})=0$ in $S^3$ such that $\Bl(\eta,\eta)$ is nontrivial, since $\Bl$ is nonsingular.
  See Lemma~\ref{lemma:nontrivial-self-blanchfield} in the appendix.
\end{remark}

\begin{remark}
  We give a generalized version of Theorem~\ref{theorem:main} concerning compositions of distinct satellite operators at the end of Section~\ref{subsection:Blanchfield-bordism}.
  See Theorem~\ref{theorem:main-generalization}.
\end{remark}

\subsection{Proof of Theorem~\ref{theorem:main}}\label{subsection:proof-of-Theorem-A}

To prove Theorem~\ref{theorem:main}, we use the following lemma, which says that $P^n(K)$ is integrally $n$-solvable {\it modulo $P^n(U)$}. (Compare it with Lemma~\ref{lemma:construction-of-n-solvable-knots}.)

\begin{lemma}\label{lemma:affine-inclusion}
Let $P\colon \cC\to \cC$ be a winding number zero satellite operator. Then, for each $n\ge 1 $ and any knot $K$, $P^n(K)\#-P^n(U)$ is integrally $n$-solvable. In particular,  $\langle P^n(\cC)\rangle \subset \cF_n + \langle P^n(U)\rangle$.
\end{lemma}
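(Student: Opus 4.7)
The plan is to mimic the proof of Lemma~\ref{lemma:construction-of-n-solvable-knots}(1), omitting the step that requires $P^n(U)\in\cF_n$. Dropping this assumption leaves the $M(P^n(U))$ boundary component of the infection cobordism uncapped; absorbing it into a standard connected-sum cobordism then produces an integral $n$-solution of $M(P^n(K)\csum -P^n(U))$ rather than of $M(P^n(K))$ itself.

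First, observe that $P^n(K)$ is the infection of $P^n(U)$ along the axis $\eta^{(n)}$ of $P^n$ with infection knot~$K$. Since $P$ has winding number zero, the axis $\eta^{(n)}$ lies in $\pi_1(S^3\sm P^n(U))^{(n)}$; this is the fact used in the proof of Lemma~\ref{lemma:construction-of-n-solvable-knots}, cited there from~\cite[Lemma~4.9]{Cha:2010-01}.

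Next, following~\cite[Proposition~3.1]{Cochran-Orr-Teichner:2004-1}, I would build the standard infection cobordism~$E$, obtained from $\bigl(M(P^n(U))\sqcup M(K)\bigr)\times[0,1]$ by attaching a $1$-handle joining the two top components and then a zero-framed $2$-handle along the curve $\eta^{(n)}\cdot m_K^{-1}$ in the resulting $M(P^n(U))\csum M(K)$, so that $\partial E = -M(P^n(U))\sqcup -M(K)\sqcup M(P^n(K))$. Cap off the $-M(K)$ boundary with an integral $0$-solution $Z_K$ of~$K$ (every knot is integrally $0$-solvable, as noted in the proof of Lemma~\ref{lemma:construction-of-n-solvable-knots}), and attach the standard cobordism which turns the two remaining boundary components into $M(P^n(K)\csum -P^n(U))$. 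The resulting $4$-manifold~$W$ has $\partial W = M(P^n(K)\csum -P^n(U))$. Verifying that $W$ is an integral $n$-solution proceeds as in~\cite[Proposition~3.1]{Cochran-Orr-Teichner:2004-1}: the $2$-handle identifies $m_K$ with $\eta^{(n)}$ in $\pi_1(E)$, and since $\eta^{(n)}\in\pi_1(M(P^n(U)))^{(n)}$, the image of $\pi_1(M(K))$ (which is normally generated by $m_K$) in $\pi_1(W)/\pi_1(W)^{(n)}$ is trivial; consequently the lagrangian-with-dual pair in $H_2(Z_K)$ provided by its $0$-solution structure maps to a lagrangian with dual in $H_2(W)$ over $\Z[\pi_1(W)/\pi_1(W)^{(n)}]$, while the handle pieces contribute nothing further to $H_2$.

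The ``In particular'' statement follows immediately from the identity $P^n(K) = P^n(U)\csum\bigl(P^n(K)\csum -P^n(U)\bigr)$ in $\cC$, which places every element of $P^n(\cC)$ in $\cF_n+\langle P^n(U)\rangle$; since the latter is a subgroup of $\cC$, it contains the group $\langle P^n(\cC)\rangle$ it generates. The main obstacle I expect is the homological bookkeeping showing that $W$ is an integral $n$-solution---tracking contributions of the various handles to $H_2(W)$ and exhibiting the lagrangian-with-dual structure over the appropriate solvable quotient ring. However, this is essentially identical to~\cite[Proposition~3.1]{Cochran-Orr-Teichner:2004-1} in its integral variant, already invoked in Lemma~\ref{lemma:construction-of-n-solvable-knots}(1), with the single modification that we do not cap off the $M(P^n(U))$ boundary by an integral $n$-solution of $P^n(U)$.
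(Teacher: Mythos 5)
Your proof is correct but takes a genuinely different route from the paper. The paper avoids re-doing any homological bookkeeping by the simple observation that $P^n(K)\csum -P^n(U)$ is the output of the satellite operator $Q:=P^n\csum(-P^n(U))$ defined by $Q(J)=P^n(J)\csum -P^n(U)$: since $Q(U)=P^n(U)\csum-P^n(U)$ is slice and the axis still lies in the $n$th derived subgroup, Lemma~\ref{lemma:construction-of-n-solvable-knots}(1) applies verbatim to give $Q(K)\in\cF_n$. Unwound, the paper's $n$-solution is $Z_K\cup E(Q,K)\cup D$ with $D$ a slice disk exterior for $Q(U)$, whereas yours is $Z_K\cup E(P^n,K)\cup C$ with $C$ a connected-sum cobordism absorbing the uncapped $M(P^n(U))$ boundary into $\partial W$; these are different 4-manifolds and different decompositions. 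Your route works but requires you to re-verify the $n$-solution conditions for a cobordism that is not literally the one COT treat --- in particular, that $H_2(W)$ modulo the image of $H_2(\partial W)$ is carried entirely by $Z_K$ (using $H_2(\partial C)\twoheadrightarrow H_2(C)$ and $H_2(\partial E)\twoheadrightarrow H_2(E)$), and that the $\Z$-coefficient lagrangian of $Z_K$ lifts because $\pi_1(Z_K)\to\pi_1(W)/\pi_1(W)^{(n)}$ is trivial (as $m_K\mapsto\eta^{(n)}\in\pi_1(W)^{(n)}$, so the map factors through the perfect group $\pi_1(Z_K)/\langle\langle m_K\rangle\rangle$, which has no nontrivial homomorphism to a solvable group). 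You flag this as the main obstacle, correctly. What the paper's trick buys is precisely skipping this re-verification: the change of operator repackages the problem so that the already-proved Lemma~\ref{lemma:construction-of-n-solvable-knots}(1) applies as a black box. What your route buys is a slightly more transparent picture of the actual cobordism and why the uncapped pattern causes no harm.
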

\begin{proof} 
Consider the satellite operator $P^n\#(-P^n(U))\colon \cC\to \cC$ defined by
\[
\bigl(P^n\#(-P^n(U))\bigr)(K)=P^n(K)\#-P^n(U).
\]

Let $\eta$ be the axis of $P^n\#(-P^n(U))$. Since $P$ has winding number zero, one readily sees (e.g., \cite[Lemma~4.9]{Cha:2010-01}) that
\[
\eta\in \pi_1(S^3 \sm (P^n(U)\#-P^n(U)))^{(n)}.
\]
Since $P^n(U)\#-P^n(U)$ is slice,
\[
  P^n(K)\#-P^n(U) = (P^n\#-P^n(U))(K)\in \cF_n
\]
by Lemma~\ref{lemma:construction-of-n-solvable-knots}~(1).
\end{proof}

By Lemma~\ref{lemma:affine-inclusion}, Theorem~\ref{theorem:main} holds if $\langle P^n(\cC)\rangle$ has infinite rank in $\cC/(\cF_{n+1}+\langle P^{n+1}(U)\rangle)$. 
Note that for an abelian group $G$ and an arbitrary element $a\in G$, $G$ has infinite rank if and only if $G/\langle a\rangle$ has infinite rank. Therefore, it suffices to show the following.

\begin{proposition}\label{proposition:infinite-rank-iteration}
Suppose that $P\colon \cC\to \cC$ is a winding number zero satellite operator with axis $\eta$ satisfying $\Bl(\eta,\eta)\ne 0$. Then, for each $n\ge 1$, $\langle P^n(\cC)\rangle$ has infinite rank in $\cC/\cF_{n+1}$. 
\end{proposition}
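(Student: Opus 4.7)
The plan is, for each $m\ge 1$, to construct knots $K_1,\dots,K_m$ such that $P^n(K_1),\dots,P^n(K_m)$ are $\Z$-linearly independent in $\cC/\cF_{n+1}$; the infinite rank statement then follows by letting $m\to\infty$. The machinery is higher-order $\lt$-signature obstruction via the Amenable Signature Theorem (Theorem~\ref{theorem:vanishing-rho-invariants}), the iterated satellite formula for $\rhot$, and the Cheeger--Gromov bound (Theorem~\ref{theorem:universal-bound}). The genuinely new feature, compared with classical Cochran--Harvey--Leidy arguments, is that $P(U)$ is \emph{not} assumed slice, so $\rhot(M(P^n(U)),\cdot)$ is only bounded by the Cheeger--Gromov constant $C=C_{M(P^n(U))}$ rather than zero; this bounded base contribution is precisely what forces the $K_i$ to depend on~$m$.

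First I would use $\Bl(\eta,\eta)\ne 0$ to build two companion representations on $M(P^n(U))$ into the same target. The nontrivial Blanchfield self-pairing at $[\eta]$ produces a character on the Alexander module of $P(U)$ whose value on $\eta$ is nontrivial; combined with the abelianization, this yields a metabelian representation $\pi_1(M(P(U)))\to\Gamma$ to a group in Strebel's class $D(\Q)$ or $D(\Z_p)$ (Lemma~\ref{lemma:amenable-D(R)-group}) with $\eta$ of infinite order in~$\Gamma$. Iterating $n$ times along the natural infinite-cyclic covers of the iterated satellite complement gives a \emph{probing} representation $\Phi^{\mathrm{pr}}_0\colon\pi_1(M(P^n(U)))\to\Gamma_n$ with $\Gamma_n^{(n+1)}=\{e\}$, still in Strebel's class, sending the innermost axis of $P^n$ to an infinite-order element; replacing the final stage by the trivial character yields a \emph{trivializing} representation $\Phi^{\mathrm{tr}}_0$ to the same $\Gamma_n$ that kills the innermost axis but agrees with $\Phi^{\mathrm{pr}}_0$ on the meridian. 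Both extend canonically to $\pi_1(M(P^n(K)))$ for any companion $K$, and by the satellite formula together with Lemma~\ref{lemma:properties-of-rho-invariants},
\[
  \rhot(M(P^n(K)),\Phi^{\mathrm{pr}}_K)=\rhot(M(P^n(U)),\Phi^{\mathrm{pr}}_0)+\int_{S^1}\sigma_K(\omega)\,d\omega,
\]
while $\rhot(M(P^n(K)),\Phi^{\mathrm{tr}}_K)=\rhot(M(P^n(U)),\Phi^{\mathrm{tr}}_0)$.

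Using Lemma~\ref{lemma:signature-jump-realization} I would choose $K_1,\dots,K_m$ with positive signature integrals $I_i:=\int_{S^1}\sigma_{K_i}(\omega)\,d\omega>mC$. Suppose $J=\csumover{i}a_iP^n(K_i)\in\cF_{n+1}\subset\cF_{n+0.5}$ with not all $a_i=0$, and let $V$ be an integral $n.5$-solution with $\partial V=M(J)$. For each $j\in\{1,\dots,m\}$, assemble $\Phi^{(j)}\colon\pi_1(M(J))\to\Gamma_n$ that is $\Phi^{\mathrm{pr}}_{K_j}$ on the $j$-th summand and $\Phi^{\mathrm{tr}}_{K_i}$ on the $i$-th summand for $i\ne j$; agreement on meridians makes the gluing consistent via van Kampen. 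The standard extension argument of~\cite{Cha:2010-01}, using that $\Gamma_n$ lies in Strebel's class and has derived length $\le n+1$, shows that $\Phi^{(j)}$ extends over~$V$, so Theorem~\ref{theorem:vanishing-rho-invariants} gives $\rhot(M(J),\Phi^{(j)})=0$. Additivity and the formulas above then yield $|a_j|I_j\le\bigl(\sum_i|a_i|\bigr)C$ for every $j$. Summing over $j$ gives $\sum_j|a_j|I_j\le mC\sum_i|a_i|$, which combined with $I_i>mC$ forces $\sum_i|a_i|=0$, a contradiction.

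The main obstacle is the coordinated construction of $\Phi^{\mathrm{pr}}_0$ and $\Phi^{\mathrm{tr}}_0$ inside a common amenable group $\Gamma_n$ of derived length $\le n+1$ in Strebel's class---requiring careful bookkeeping of Blanchfield-type characters at every level of the $n$-fold iteration---together with the verification that each $\Phi^{(j)}$ extends over an \emph{arbitrary} integral $n.5$-solution $V$ for $M(J)$. In previous literature where $P(U)$ is assumed slice, the extension step is reduced to an analysis of a slice complement of $P^n(U)$, whose fundamental group has very restricted derived quotients; here no such complement is available, and the extension must be carried out using only the conditions on $\pi_1(V)/\pi_1(V)^{(k)}$ supplied by integral $n.5$-solvability of~$V$.
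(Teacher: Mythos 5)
Your high-level strategy matches the paper's: for each $m\ge 1$ produce $m$ knots depending on $m$, use the Cheeger--Gromov bound to control the non-slice base contribution, and run a counting argument against the amenable signature theorem. The numerical estimate at the end is also essentially the paper's. However, there is a genuine gap at the step you yourself flag as ``the main obstacle,'' and it is not a gap that can be filled in the framework you set up.

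The problem is the claim that the a priori representations $\Phi^{(j)}\colon\pi_1(M(J))\to\Gamma_n$ extend over an arbitrary integral $n.5$-solution $V$. Theorem~\ref{theorem:vanishing-rho-invariants} is an implication: \emph{if} $\phi$ extends over $V$ and sends a meridian to an infinite order element, \emph{then} $\rhot$ vanishes. It does not assert that any particular homomorphism extends, and in general a boundary representation defined independently of $V$ does not. The obstruction is concrete: a metabelian representation $\pi_1(M(J))\to\Gamma$ extends over $V$ only if the induced map on the (higher-order) Alexander module vanishes on the ``metabolizer'' $\Ker\{H_1(M(J);R[\Gamma])\to H_1(V;R[\Gamma])\}$, and this submodule depends on $V$, which you have not yet seen when you build $\Phi^{\mathrm{pr}}_0$, $\Phi^{\mathrm{tr}}_0$. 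There is no ``standard extension argument'' in \cite{Cha:2010-01} that produces an extension for a pre-specified $\phi$; the argument there goes in the opposite direction.

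The paper's proof avoids this by \emph{not} choosing $\phi$ in advance: it builds the cobordism $W_0$ (containing $V$, the standard cobordism $C$, and the satellite cobordisms $E_k^i$), sets $G=\pi_1(W_0)/\cP^{n+1}\pi_1(W_0)$ using an iterated mixed $\Q$/$\Z_{p_1}$-Alexander-module construction, and takes $\phi$ to be the quotient map. Extension over $V$ is then automatic. The real work is showing that this canonically defined $\phi$ is still nontrivial on the meridians $\mu_0^{i,j}$ of the $J_0^i$ (condition (H2)), i.e.\ that the innermost axis is not killed by the adaptive quotient. This is where $\Bl(\eta,\eta)\ne 0$ is actually used: the Lagrangian/Blanchfield bordism machinery (Lemma~\ref{lemma:Blanchfield-bordism}, Proposition~\ref{proposition:lagrangian-Blanchfield-bordism}) shows that the kernel of $H_1(M(J_{k+1}^i)^j;R[\Gamma])\to H_1(W_{k+1};R[\Gamma])$ is self-annihilating for the higher-order Blanchfield form, and $\Bl^R(\eta,\eta)\ne 0$ guarantees $\mu_k$ is not in that kernel, inductively for $k=n,\dots,0$. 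In your proposal $\Bl(\eta,\eta)\ne 0$ is used only to build a ``nice'' boundary representation, which is the wrong place for it to enter. Relatedly, the paper needs only a single representation after normalizing so that $a_1\ge|a_i|>0$, and separates the contributions of the different $J_0^i$ by localizing their signature profiles at distinct primes $p_i$ and taking $\Z_{p_1}$ coefficients at the top level; your scheme of $m$ separate probing/trivializing pairs multiplies the (unresolved) extension difficulty by $m$. To repair the proof you would need to restructure it so that the target group is built from the 4-manifold, and then reprove nontriviality via Blanchfield bordism, which is essentially the paper's Proposition~\ref{proposition:computation-of-rho-invariant-of-J_0^i}.
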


The rest of this section is devoted to proving Proposition~\ref{proposition:infinite-rank-iteration}.

\begin{proof}[Proof of Proposition~\ref{proposition:infinite-rank-iteration}]
For each integer $m\ge 1$, we will show that there exist $m$ knots $J_0^1$,~\dots, $J_0^m$ such that $P^n(J_0^1)$,~\dots, $P^n(J_0^m)$ are linearly independent in $\cC/\cF_{n+1}$.
It implies that $\rank \langle P^n(\cC)\rangle \ge m$ in $\cC/\cF_{n+1}$ for all~$m$, and therefore $\langle P^n(\cC)\rangle$ has infinite rank in $\cC/\cF_{n+1}$.

Fix $n$ and~$m$. 
Write $\Bl(\eta,\eta)=f(t)/g(t)\in \Q(t)/\Z[t^{\pm 1}]$, where $f(t)$ and $g(t)$ are Laurent polynomials in $\Z[t^{\pm 1}]$ with $\deg f(t) < \deg g(t)$, $f(t)\ne 0$.
For a prime $p$, let
\[
\Bl\nolimits^p\colon H_1(M(P(U));\Z_p[t^{\pm 1}])\times H_1(M(P(U));\Z_p[t^{\pm 1}])\to \Z_p(t)/\Z_p[t^{\pm 1}]
\] 
be the mod $p$ reduction of the Blanchfield form.
Then one readily sees that $\Bl^p(\eta,\eta)$ is nontrivial if $p$ is greater than the absolute values of all the coefficients of $f(t)$ and~$g(t)$.
Therefore, there exist distinct primes $p_1, \ldots, p_m$ such that $\Bl^{p_i}(\eta, \eta)$ is nontrivial, $1\le i\le m$. 

Choose a constant $L>0$ such that $|\rhot(M(P(U)), \phi)|< L$ for all~$\phi$, using Theorem~\ref{theorem:universal-bound}.
For $1\le i\le m$, let $\omega_i=\text{exp}(2\pi\sqrt{-1}/p_i) \in ~\C$.

\begin{lemma}\label{lemma:knots-J_0^i}
There exist knots $J_0^1, J_0^2, \ldots, J_0^m$ satisfying the following.
\begin{enumerate}
	\item $\sigma_{J_0^i}(\omega_i)=\sigma_{J_0^i}(\omega_i^{-1}) > \frac12 mnL$ for each $i$.
	\item $\sigma_{J_0^i}(\omega_i^r)=0$ for $r\ne \pm 1$ \textup{(mod $p_i$)} for each $i$.
	\item If $i\ne j$, then $\sigma_{J_0^i}(\omega_j^r)=0$ for all $r$.
\end{enumerate}
\end{lemma}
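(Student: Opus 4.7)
Since constraints (1)--(3) specify $\sigma_{J_0^i}$ at only finitely many roots of unity, the natural approach is to realize prescribed signature functions directly using Lemma~\ref{lemma:signature-jump-realization}. The plan is to construct each $J_0^i$ as a signed connected sum of two knots whose signature jumps are localized in tiny intervals on either side of the angle $2\pi/p_i$, arranged so that the resulting signature function is a ``spike'' supported in small neighborhoods of $\omega_i^{\pm 1}$ and vanishes at every other target point of the form $\omega_j^r$.

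The first step will be to fix $\epsilon>0$ small enough that, for every $i\in\{1,\ldots,m\}$, the interval $(2\pi/p_i - \epsilon,\, 2\pi/p_i + \epsilon)\subset(0,2\pi)$ contains no angle of the form $2\pi r/p_j$ other than $2\pi/p_i$ itself. Since the set of such angles is finite and the $p_j$ are distinct primes, this can be arranged; shrinking $\epsilon$ further gives the analogous property near $2\pi - 2\pi/p_i$.

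Next, for each $i$ with $p_i$ odd, I will apply Lemma~\ref{lemma:signature-jump-realization} twice: once with $\theta_0 = 2\pi/p_i$ to produce a knot $A_i$ whose unique signature jump in $[0,\pi]$ occurs at some $\alpha_i\in(2\pi/p_i - \epsilon,\, 2\pi/p_i)$, and once with $\theta_0 = 2\pi/p_i + \epsilon$ to produce $B_i$ whose unique jump lies at $\beta_i\in(2\pi/p_i,\, 2\pi/p_i + \epsilon)$. Setting $a = \delta_{A_i}(\alpha_i)$, $b = \delta_{B_i}(\beta_i)$, and replacing $A_i$ or $B_i$ by its mirror if necessary so that $ab>0$, additivity of the signature function under connected sum together with the conjugate symmetry of $\sigma_K$ implies that the signature of $nA_i \csum kB_i$ equals $na$ on $(\alpha_i,\beta_i)\cup(2\pi-\beta_i,2\pi-\alpha_i)$, equals $na+kb$ on $(\beta_i,2\pi-\beta_i)$, and vanishes on $(0,\alpha_i)\cup(2\pi-\alpha_i,2\pi)$. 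I will then take $n=Nb$ and $k=-Na$ for a large integer $N$, which kills the middle band while leaving signature $Nab$ on the two small neighborhoods containing $\omega_i^{\pm 1}$. Defining $J_0^i := Nb\,A_i \csum (-Na)\,B_i$ (with negative multipliers denoting mirror-reverses) and choosing $N$ large enough yields $\sigma_{J_0^i}(\omega_i^{\pm 1}) = Nab > \tfrac{1}{2}mnL$, giving (1); properties (2) and (3) then follow from the choice of $\epsilon$, which places every other target $\omega_j^r$ in a region where $\sigma_{J_0^i}$ vanishes.

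The case $p_i = 2$ (so $\omega_i = -1$) will require a minor modification, since no $\beta_i > \pi$ is permitted by Lemma~\ref{lemma:signature-jump-realization}: here the point $-1$ is already isolated from every other target by the choice of $\epsilon$, and a single application of the lemma with $\theta_0$ just below $\pi$ produces the desired spike without any cancellation step. The main technical nuisance I anticipate is the sign and integrality bookkeeping needed to ensure $ab>0$ and that the cancelling multipliers $n,k$ are integers; both are resolved by passing to mirrors and absorbing scalar factors into $N$. Beyond this, the construction is straightforward given Lemma~\ref{lemma:signature-jump-realization}.
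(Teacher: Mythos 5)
Your proof is correct, and it takes a somewhat more self-contained route than the paper. Where the paper invokes \cite[Lemma~5.6]{Cha:2009-1} as a black box to obtain knots $J^i$ with $\sigma_{J^i}$ vanishing at all the relevant roots of unity except $\omega_i^{\pm1}$ (then scales by mirroring and connected sums), you build such knots explicitly from Lemma~\ref{lemma:signature-jump-realization}: a jump just before $2\pi/p_i$ and a jump just after, scaled so that the resulting signature function is supported in a narrow double band around $\omega_i^{\pm1}$ and cancels elsewhere. This is essentially the content of the cited lemma, so your argument is the same idea in expanded form rather than a genuinely different one; the advantage is that it relies only on the tool already stated in the paper (Lemma~\ref{lemma:signature-jump-realization}). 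The final mirroring-and-connected-sum step to exceed $\tfrac12 mnL$ matches the paper's, and your handling of the $p_i=2$ case and your choice of $\epsilon$ isolating the finitely many angles $2\pi r/p_j$ (which are pairwise distinct since the $p_j$ are distinct primes) are both correct.
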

\begin{proof}
Note that the numbers $k/p_i$ are pairwise distinct for $1\le i\le m$, $1\le k\le p_i-1$.
By \cite[Lemma~5.6]{Cha:2009-1}, it follows that there exist knots $J^i$ $(1\le i\le m)$ that satisfy the conditions (2), (3) and that $\sigma_{J^i}(\omega_i)\ne 0$.
By changing the orientation of $J^i$ if necessary, we may assume that $\sigma_{J^i}(\omega_i)>0$.
Let $J_0^i$ be the connected sum of sufficiently many copies of $J^i$ so that $\sigma_{J_0^i}(\omega_i)>\frac12 mnL$.
\end{proof}

Let $J_k^i=P^k(J_0^i)$ for $1\le k\le n$ and $1\le i\le m$ where $J_0^i$ is the knot in Lemma~\ref{lemma:knots-J_0^i}. We will show that the $J_n^i$ $(1\le i\le m)$ are linearly independent in $\cC/\cF_{n+1}$.

Suppose not. Then there exists a finite linear combination $J=\#_{i=1}^m a_iJ_n^i$ lying in $\cF_{n+1}$ where $a_i\ne 0$ for some~$i$.  By rearranging $J_n^i$ and replacing $J$ with $-J$ if necessary, we may assume that for some $m'\le m$ we have $J=\#_{i =1}^{m'}a_iJ_n^i$ where $a_1\ge |a_i| > 0$ for each $1\le i\le m'$.

We construct a 4-manifold $W_0$ depicted in Figure~\ref{figure:infinite-rank} using the following 4-manifolds
(cf.\ \cite[p.~4792]{Cha:2010-01}).

\begin{enumerate}
	\item Since $J\in \cF_{n+1}$, there is an integral $(n+1)$-solution $V$ bounded by~$M(J)$.
  \item
  A ``standard cobordism'' $C = C(a_1J_n^1, \ldots, a_{m'}J_n^{m'})$ bounded by 
	\[
	\partial C(a_1J_n^1, \ldots, a_{m'}J_n^{m'}) = \biggl(\bigsqcup_{i=1}^{m'}a_iM(J_n^i)\biggr) \sqcup (-M(J))
	\]
	is described in \cite[p.~113]{Cochran-Orr-Teichner:2004-1}.
  The fact we need is that the inclusion-induced map $H_2(\partial C;\Z[G]) \to H_2(C;\Z[G])$ is surjective for any group $G$, and hence the $L^2$-signature ${\sign^{(2)}_G} C$ vanishes for any homomorphism $\pi_1(C) \to G$ (see \cite[Lemma~2.4]{Cochran-Harvey-Leidy:2009-01} and its proof).
  We note that the special case of $G=\{e\}$ gives $\sign C=0$. 
  \item For $0\le k\le n-1$, a cobordism $E(P,J_k^i)$ bounded by 
  \[
  \partial E(P,J_k^i)= M(J_k^i)\sqcup M(P(U)) \sqcup (-M(J_{k+1}^i)).
  \]
  is described in~\cite[p.~4792]{Cha:2010-01} (see the construction of $E_k$ therein) and the proof of \cite[Lemma~2.3]{Cochran-Harvey-Leidy:2009-01}.
  For brevity, let $E_k^i = E(P,J_k^i)$.
  The fact we need is that the inclusion-induced map $H_2(\partial E_k^i;\Z[G]) \to H_2(E_k^i;\Z[G])$ is surjective for any group $G$ (see \cite[Lemma~2.4]{Cochran-Harvey-Leidy:2009-01} and its proof) and hence $\sign^{(2)}_GE_k^i=0$ for any $\pi_1(E_k^i) \to G$.	
\end{enumerate}

Note that $M(P(U))$ is a boundary component of each~$E_k^i$.
For clarity, denote $M(P(U))\subset \partial E_k^i$ by $M_k^i(P(U))$.
Also, denote the axis $\eta \subset M_k^i(P(U))$ of $P$ by~$\eta_k^i$.

Now we define 4-manifolds $W_0, W_1, \ldots, W_n$ inductively in the reverse order as follows.
Let $W_n=V\cup_{M(J)} C$. Then $\partial W_n=\bigsqcup_ia_iM(J_n^i)$. 

Suppose $0\le k\le n-1$ and that $W_{k+1}$ has been constructed so that
\[
\partial W_{k+1}= \bigsqcup_{i=1}^{m'}a_i\Biggl( M(J_{k+1}^i)\sqcup\Biggl(\bigsqcup_{j=k+1}^{n-1}M_{j}^i(P(U))\Biggr)\Biggr).
\]
By glueing $a_i E^i_k$ to $W_{k+1}$ along the common boundary components $a_i M(J_{k+1}^i)$ for $i=1,\ldots,m'$, define a 4-manifold $W_k$ as follows: 
\[
W_k=W_{k+1}
\cup
\Biggl(\bigsqcup_{i=1}^{m'} a_iE_k^i\Biggr).
\]
Then
\[
\partial W_k= \bigsqcup_{i=1}^{m'}a_i\Biggl( M(J_k^i)\sqcup\Biggl(\bigsqcup_{j=k}^{n-1}M_{j}^i(P(U))\Biggr)\Biggr).
\]
When $k=0$, we have
\begin{align*}
W_0&=V\cup C\cup \Biggl(\bigsqcup_{i=1}^{m'} a_iE_{n-1}^i\Biggr)\cup \Biggl(\bigsqcup_{i=1}^{m'} a_iE_{n-2}^i\Biggr)\cup \cdots \cup \Biggl(\bigsqcup_{i=1}^{m'} a_iE_0^i\Biggr), \text{ and } 
\\
\partial W_0&= \bigsqcup_{i=1}^{m'}a_i\Biggl( M(J_0^i)\sqcup\Biggl(\bigsqcup_{j=0}^{n-1}M_j^i(P(U))\Biggr)\Biggr).
\end{align*}
See Figure~\ref{figure:infinite-rank} for a schematic diagram.

\begin{figure}[t]
  \includestandalone[scale=0.9]{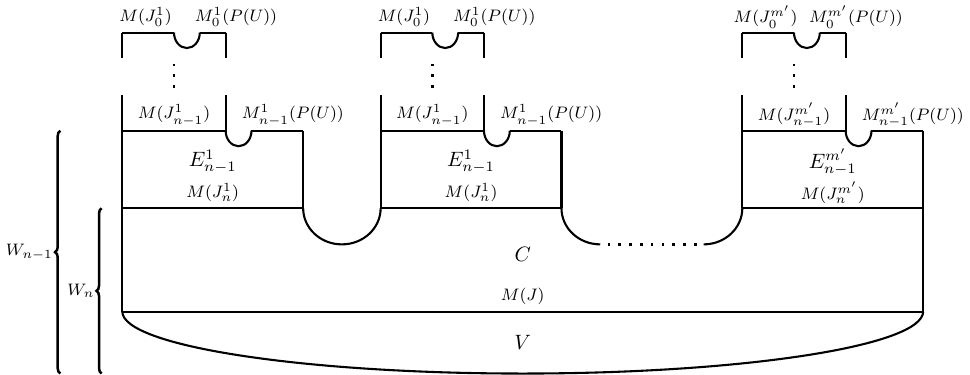}
  \caption{The cobordism $W_0$}
  \label{figure:infinite-rank}
\end{figure}

Let $\phi\colon \pi_1(W_0)\to G$ be a homomorphism. We will make a specific choice of $\phi$ later.
For brevity, we denote by $\phi$ restrictions of $\phi$ to subspaces of~$W_0$.

From the definition of $\rhot$-invariants (see Section~\ref{section:preliminary}), we have 
\begin{equation}\label{equation:rho-and-signature}
\rhot(\partial W_0, \phi) = \sign_G^{(2)}(W_0)-\sign(W_0).
\end{equation}
Using two different computations, we will show that $\rhot(\partial W_0, \phi) >0$ and $\sign_G^{(2)}(W_0)-\sign(W_0)=0$ for some choice of~$\phi$.
This contradicts~\eqref{equation:rho-and-signature}, so it completes the proof.

We compute $\sign_G^{(2)}(W_0)-\sign(W_0)$. For brevity, for a 4-manifold $X$, let 
\[
S_G(X)=\sign_G^{(2)}(X)-\sign(X).
\] 
Let $\epsilon_i=a_i/|a_i|$ and $E_k^{i,j}$ be the $j$th copy of $E_k^i$, $1\le j\le |a_i|$, used in the construction of~$W_0$.
By Novikov additivity, 
\[
S_G(W_0) = S_G(V) + S_G(C) + \sum_{k=0}^{n-1}\sum_{i=1}^{m'} \sum_{j=1}^{|a_i|}S_G(\epsilon_i E_k^{i,j}).
\]

By (2) and (3) above, $S_G(C)=0$ and $S_G(\epsilon_iE_k^{i,j})=0$ for all $i$, $j$ and~$k$. 
Since $V$ is an integral $(n+1)$-solution with $\partial V=M(J)$, Theorem~\ref{theorem:vanishing-rho-invariants} ensures that $S_G(V)=\rhot(M(J),\phi)=0$ whenever $\phi\colon \pi_1(W_0)\to G$ satisfies the condition (H1) below. 
\begin{itemize}
\item[(H1)] The codomain $G$ lies in Strebel's class $D(\Z_{p_1})$, $G^{(n+1)}=\{e\}$, and $\phi$ sends a meridian of $J$ to an element of infinite order in~$G$.
\end{itemize}	
Therefore, it follows that $S_G(W_0)=0$ for $\phi$ satisfying~(H1).

Next, we compute $\rhot(\partial W_0, \phi)$. Let $M(J_k^i)^j$ and $M_k^{i,j}$  denote the copies of $M(J_k^i)$ and $M_k^i(P(U))$ that appear in the top boundary of $E_k^{i,j}$ for $1\le j\le |a_i|$, respectively. That is, 
\[
\partial E_k^{i,j}=M(J_k^i)^j\sqcup M_k^{i,j}\sqcup (-M(J_{k+1}^i)^j).
\] 
Then
\[
\partial W_0= \Biggl(\bigsqcup_{i=1}^{m'} \bigsqcup_{j=1}^{|a_i|}\epsilon_iM(J_0^i)^j\Biggr) \sqcup \Biggl(\bigsqcup_{k=0}^{n-1}\bigsqcup_{i=1}^{m'} \bigsqcup_{j=1}^{|a_i|}\epsilon_iM_k^{i,j}\Biggr).
\]
By additivity of $\rhot$-invariants and Lemma~\ref{lemma:properties-of-rho-invariants}~(3), we have
\begin{equation}\label{equaiton:rho-invariant-of-W_0}
\rhot(\partial W_0, \phi) = \sum_{i=1}^{m'}\sum_{j=1}^{|a_i|}\epsilon_i\rhot(M(J_0^i)^j,\phi) + \sum_{k=0}^{n-1} \sum_{i=1}^{m'} \sum_{j=1}^{|a_i|} \epsilon_i\rhot(M_k^{i,j},\phi). 
\end{equation}

Now suppose that $\phi:\pi_1(W_0)\to G$ satisfies the condition (H2) below.
\begin{itemize}
\item[(H2)] The image of $\phi\colon \pi_1(M(J_0^i)^j)\to G$ is isomorphic to~$\Z_{p_1}$ for each $i$ and~$j$.
\end{itemize}
 
Recall that $\omega_1=\text{exp}(2\pi\sqrt{-1}/p_1)$. By Lemma~\ref{lemma:properties-of-rho-invariants}~(5), 
\begin{equation*}
  \rhot(M(J_0^i)^j,\phi)=\frac{1}{p_1}\sum_{r=0}^{p_1-1}\sigma_{J_0^i}(\omega_1^r)
\end{equation*}
for each $i$ and~$j$.
So, by our choice of the $J_0^i$ (see Lemma~\ref{lemma:knots-J_0^i}),
\begin{equation}
  \rhot(M(J_0^i)^j, \phi)= \left\{
    \begin{array}{ll}
      2\sigma_{J_0^1}(\omega_1) > mnL & \text{ if } i=1,\\
      0 & \text{ otherwise.}
    \end{array}
  \right.
  \label{equation:top-level-rho}  
\end{equation}
Now we have
\begin{equation*}
    \begin{aligned}
    \rhot(\partial W_0,\phi)	&> a_1mnL + \sum_{k=0}^{n-1} \sum_{i=1}^{m'} \sum_{j=1}^{|a_i|} \epsilon_i\rhot(M_k^{i,j},\phi)
      && \text{by \eqref{equation:top-level-rho}}\\
    & >  a_1 mn L -nL\sum_{i=1}^{m'}|a_i|
      && \text{since $|\rhot(M_k^{i,j},\phi)|< L$}\\
        &= a_1 mn L -a_1m'nL
      && \text{since $|a_i| \le a_1$}\\
    & \ge a_1 mn L- a_1mn L = 0
      && \text{since $m'\le m$.}
  \end{aligned}
\end{equation*}
 
So, $\rhot(\partial W_0,\phi) \ne S_G(W_0)$.
It contradicts~\eqref{equation:rho-and-signature}.
Therefore, it completes the proof, modulo the proof of Proposition~\ref{proposition:computation-of-rho-invariant-of-J_0^i} stated below.
\end{proof}

\begin{proposition}\label{proposition:computation-of-rho-invariant-of-J_0^i}
There exists a homomorphism $\phi\colon \pi_1(W_0)\to G$ that satisfies both (H1) and (H2).
\end{proposition}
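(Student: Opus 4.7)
The plan is to build $G$ and $\phi$ as an iterated semidirect product tower of depth $n+1$, using the $\Z_{p_1}$-reduction of the Blanchfield form at each successive level. Write $p = p_1$ for brevity. Take
\[
  G \;=\; A_n \rtimes \bigl(A_{n-1} \rtimes \cdots \rtimes (A_1 \rtimes \Z)\bigr),
\]
where each $A_k$ is a finite-dimensional $\Z_p$-vector space whose module structure is determined inductively below. This shape of $G$ handles the abstract part of (H1) automatically: by Lemma~\ref{lemma:amenable-D(R)-group}(2), $G$ lies in Strebel's class $D(\Z_p)$, and since $G$ has a normal series of length $n+1$ with abelian quotients, $G^{(n+1)} = \{e\}$.

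Next I would define $\phi$ level by level. Write $\Gamma_0 = \Z$ and $\Gamma_k = A_k \rtimes \Gamma_{k-1}$, so that $G = \Gamma_n$. The base map $\phi_0 \colon \pi_1(W_0) \to \Z$ is the ``linking-number'' character: it sends each meridian of $J$ (and of each $J_n^i$, and of each copy of $P(U)$) to $1$, while sending each deeper meridian of $J_k^i$ ($k < n$) and each axis $\eta_\ell^{i,j}$ to $0$, using that $P$ has winding number zero. This $\phi_0$ is well-defined on all of $\pi_1(W_0)$ because $H_1$ of each constituent piece ($V$, $C$, and each $E_\ell^{i,j}$) is infinite cyclic generated by a meridian of $J$ (or of $P(U)$): for $V$ this follows from the definition of an integral $(n+1)$-solution, while for $C$ and $E_\ell^{i,j}$ it is immediate from their constructions. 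The meridian of $J$ has infinite-order image, securing the remaining part of (H1). Given $\phi_{k-1} \colon \pi_1(W_0) \to \Gamma_{k-1}$, I would extend to $\phi_k$ by choosing a $\phi_{k-1}$-twisted $A_k$-valued cocycle on $\pi_1(W_0)$. Locally on each boundary piece $M_\ell^{i,j}$, the cocycle is built from $\Bl^p$: since $\Bl^p(\eta,\eta) \ne 0$, the rule $x \mapsto \Bl^p(\eta, x)$ gives a character on $H_1(M(P(U)); \Z_p[t^{\pm 1}])$ that is nontrivial on the axis. At the final level $k = n$, take $A_n = \Z_p$ and arrange the character so that the image of the meridian of $J_0^i$ (which is identified in $W_0$ with $\eta_0^{i,j}$) generates $A_n$; this yields (H2).

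The main obstacle is to assemble the locally defined $\Bl^p$-characters into a single homomorphism on all of $\pi_1(W_0)$. Three ingredients handle this. First, across each cobordism $E_\ell^{i,j}$ the axis $\eta_\ell^{i,j} \subset M_\ell^{i,j}$ is identified with the meridian of $J_\ell^i$ in $M(J_\ell^i)^j$, forcing the characters at consecutive levels to agree and feeding the pairing input to the next stage. Second, the inclusion-induced maps on $H_2$ with any group-ring coefficients are surjective on $C$ and on each $E_\ell^{i,j}$ (items (2) and (3) in the proof of Proposition~\ref{proposition:infinite-rank-iteration}), which kills the obstruction to extending each cocycle across these pieces in any local coefficient system. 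Third, because $V$ is an integral $(n+1)$-solution, the inductive extension can be carried all the way across $V$, matching the depth $n+1$ of the tower. Once these are in place, (H1) and (H2) follow directly from the choices made at the bottom and top of the tower.
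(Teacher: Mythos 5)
Your proposal takes a genuinely different shape from the paper's, which is promising in spirit, but it contains a genuine gap at exactly the point where the hard work must happen.

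The paper's approach does not build $G$ and $\phi$ by hand as a semidirect product tower with twisted cocycles. Instead it takes $G$ to be a canonical quotient $\pi_1(W_0)/\cP^{n+1}\pi_1(W_0)$ of a derived-series-like filtration $\cP^{\bullet}$ (with $\Q$ coefficients at levels $1$ through $n$ and $\Z_{p_1}$ coefficients only at the top level $n+1$), and then shows via a downward induction that the meridian $\mu_0^{i,j}$ has nontrivial image in $\cP^n/\cP^{n+1}$. The crux of that induction is a nontriviality statement at each level: $\mu_k$ must not die in $H_1(W_{k+1};R[\Gamma])$. This is proved by exhibiting $W_{k+1}$ as a Blanchfield bordism (Lemma~\ref{lemma:Blanchfield-bordism}), which combined with $\cB(\mu_k,\mu_k)\ne 0$ forces $\mu_k$ to survive. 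The condition $\Bl(\eta,\eta)\ne 0$ is consumed precisely here.

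Your proposal never addresses this survival question. You write that ``because $V$ is an integral $(n+1)$-solution, the inductive extension can be carried all the way across $V$,'' and that (H2) then ``follow[s] directly from the choices made at the bottom and top of the tower.'' But the definition of an integral $(n+1)$-solution is a statement about the twisted intersection form on $H_2(V)$ admitting a lagrangian with dual; it does not, on its face, give you any license to extend a twisted cocycle over $V$, and it certainly does not guarantee that the extended cocycle remains nontrivial on $\mu_0^{i,j}$. The character $x\mapsto\Bl^p(\eta,x)$ that you define locally on $H_1(M(P(U));\Z_p[t^{\pm1}])$ could perfectly well vanish on the image of $\mu_0^{i,j}$ after you are forced to extend it over $V$ --- precisely because the metabolizer coming from $V$ could contain $\mu_0^{i,j}$. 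Ruling this out is the heart of the proof, and it is exactly where the Blanchfield/Lagrangian bordism machinery (Proposition~\ref{proposition:lagrangian-Blanchfield-bordism}, Lemmas~\ref{lemma:integral-solution-lagrangian-bordism}, \ref{lemma:lagrangian-bordism-extension}, \ref{lemma:C-and-E-injectivity}) is deployed. Your proposal has no substitute for this step.

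Two secondary problems reinforce the concern. First, you take every $A_k$ to be a $\Z_p$-vector space, whereas the paper's intermediate quotients $\cP^i\Gamma/\cP^{i+1}\Gamma$ for $1\le i\le n$ are $\Q$-vector spaces (torsion-free). This matters because the higher-order Blanchfield pairing $\cB$ used in the nontriviality argument is defined over a skew field of quotients of $R[\Gamma]$, which requires $\Gamma$ to be poly-torsion-free-abelian at the relevant stage; a tower with $\Z_p$-torsion at intermediate levels breaks this, and it is not obvious how to replace it. Second, your appeal to surjectivity of $H_2(\partial X)\to H_2(X)$ for $C$ and $E_\ell^{i,j}$ as the thing that ``kills the obstruction to extending each cocycle'' is not the right invocation: that surjectivity is used in the paper to show the $L^2$-signatures of those pieces vanish and (via Lemma~\ref{lemma:C-and-E-injectivity}) to propagate the Lagrangian bordism property; it does not by itself control the extension of characters in a local system, which would be an $H^1$/$H^2$-with-twisted-coefficients statement. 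The extension across $C$ and $E_\ell^{i,j}$ is handled in the paper by the isomorphism $\pi_1(W_k)/\pi_1(W_k)^{(j)}\cong\pi_1(W_{k+1})/\pi_1(W_{k+1})^{(j)}$ from \cite[Assertion~1 on p.~4799]{Cha:2010-01}, not by the $H_2$ surjectivity.

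In short, the tower-and-cocycle framing is a legitimate stylistic alternative to taking the canonical quotient, but your proposal leaves the essential content --- that the meridian survives inclusion into $W_{k+1}$, which uses $\Bl(\eta,\eta)\ne 0$ together with the Blanchfield bordism lemma --- entirely unproved.
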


Our assumption that $\Bl(\eta,\eta)$ is nontrivial will be used to prove Proposition~\ref{proposition:computation-of-rho-invariant-of-J_0^i}.
We will give a proof of Proposition~\ref{proposition:computation-of-rho-invariant-of-J_0^i} in Section~\ref{subsection:proof-of-lemma} below.

\begin{remark}\label{remark:main-generalization}
  The arguments in the proof of Proposition~\ref{proposition:infinite-rank-iteration} work only for a finite family of knots $J_1^i$,~\dots, $J_m^i$ and cannot be applied to an infinite family of knots: in Lemma~\ref{lemma:knots-J_0^i}~(1), the choice of knots $J_0^i$, $1\le i\le m$, depends on the finiteness of the cardinality~$m$.
  \end{remark}

\subsection{Proof of Proposition~\ref{proposition:computation-of-rho-invariant-of-J_0^i}}\label{subsection:proof-of-lemma}

In this subsection, we give a proof of Proposition~\ref{proposition:computation-of-rho-invariant-of-J_0^i}, which states that there exists a homomorphism $\phi\colon \pi_1(W_0)\to G$ that satisfies both (H1) and (H2). For the reader's convenience, we recall (H1) and (H2) below.

\begin{itemize}
\item[(H1)] The codomain $G$ lies in Strebel's class $D(\Z_{p_1})$, $G^{(n+1)}=\{e\}$, and $\phi$ sends a meridian of $J$ to an element of infinite order in~$G$.
\item[(H2)] The image of $\phi\colon \pi_1(M(J_0^i)^j)\to G$ is isomorphic to~$\Z_{p_1}$ for each $i$ and~$j$.
\end{itemize}	

To obtain $\phi$ satisfying (H1) and (H2), we define subgroups $\cP^i\Gamma$ of a group $\Gamma$ for $i=0,1,\ldots, n+1$ as below, following \cite[Definition~4.1]{Cha:2010-01}. 
Let $\cP^0\Gamma = \Gamma$. For $i=0,1,\ldots,n-1$, define inductively
\[
\cP^{i+1}\Gamma = \Ker\left\{\cP^i\Gamma \to \frac{\cP^i\Gamma}{[\cP^i\Gamma,\cP^i\Gamma]}\to \frac{\cP^i\Gamma}{[\cP^i\Gamma,\cP^i\Gamma]}\otimes_\Z \Q = H_1\big(\Gamma;\Q[\Gamma/\cP^i\Gamma]\big)\right\}.
\]
Finally, define 
\[
\cP^{n+1}\Gamma = \Ker\left\{\cP^n\Gamma \to \frac{\cP^n\Gamma}{[\cP^n\Gamma,\cP^n\Gamma]}\to \frac{\cP^n\Gamma}{[\cP^n\Gamma,\cP^n\Gamma]}\otimes_\Z \Z_{p_1} = H_1\big(\Gamma;\Z_{p_1}[\Gamma/\cP^n\Gamma]\big)\right\}.
\] 
Each $\cP^i\Gamma$ is a normal subgroup of~$\Gamma$.
We will apply this definition to $\Gamma=\pi_1(W_k)$, $k\ge 0$.

Let $G=\pi_1(W_0)/\cP^{n+1}\pi_1(W_0)$.
Let $\phi\colon \pi_1(W_0)\to G$ be the quotient map. 

\begin{proof}[Proof of (H1)]
  One can readily see that $G^{(n+1)}=\{e\}$ since $\pi_1(W_0)^{(n+1)}\subset \cP^{n+1}\pi_1(W_0)$. 
  For $\Gamma=\pi_1(W_0)$ and $0\le k\le n$, $\cP^k\Gamma/\cP^{k+1}\Gamma$ injects into $H_1\left(\Gamma;R\left[\Gamma/\cP^k\Gamma\right]\right)$ with $R=\Q$ or $\Z_{p_1}$, which is an $R$-vector space.
    So, by Lemma~\ref{lemma:amenable-D(R)-group}(2), $G$ lies in Strebel's class $D(\Z_{p_1})$ (see also \cite[Lemma~4.3]{Cha:2010-01}).

    Also, a meridian of $J$ generates
  \[
    \pi_1(W_0)/\cP^1  \pi_1(W_0) \cong H_1(W_0)/\text{torsion} = H_1(W_0) \cong H_1(M(J)) = \Z    
  \]
  onto which $G$ surjects.
  So $\phi$ sends a meridian of $J$ to an infinite order element in~$G$.  
    \def\qedsymbol{} 
\end{proof}

\begin{proof}[Proof of (H2)]
Let $\mu_k^{i,j}$ be a meridian of $M(J_k^i)^j$. Since $H_1(M(J_0^i)^j)\cong~\Z$ is generated by $\mu_0^{i,j}$, and since the group $\cP^n\pi_1(W_0)/\cP^{n+1}\pi_1(W_0)$ is a $\Z_{p_1}$-vector space, $\phi$ satisfies (H2) if it satisfies (H2$'$) below.
\begin{itemize}
  \item[(H2$'$)] The image of each $\mu_0^{i,j}$ under $\phi$ is nontrivial in $\cP^n\pi_1(W_0)/\cP^{n+1}\pi_1(W_0)$.
\end{itemize}

For brevity, we write $\mu_k$ for $\mu_k^{i,j}$ when the superscripts are understood. We will show, inductively, that for $k=n, n-1, \ldots, 0$, $\mu_k=\mu_k^{i,j}$ is nontrivial in $\cP^{n-k}\pi_1(W_k)/\cP^{n-k+1}\pi_1(W_k)$.
The $k=0$ case implies the desired conclusion~(H2$'$).

For $k=n$, the claim follows from
\[
\cP^0\pi_1(W_n)/\cP^1\pi_1(W_n)\cong H_1(W_n) \cong H_1(M(J_n^i)^j)\cong \langle \mu_n\rangle \cong \Z.
\]

Suppose that $\mu_{k+1}=\mu_{k+1}^{i,j}$  is nontrivial  in $\cP^{n-k-1}\pi_1(W_{k+1})/\cP^{n-k}\pi_1(W_{k+1})$, $k\le n-1$.
We will show that $\mu_k$ is nontrivial in the quotient $\cP^{n-k}\pi_1(W_k)/\cP^{n-k+1}\pi_1(W_k)$.

First, we show that $\mu_k\in \cP^{n-k}\pi_1(W_k)$.
Recall that $M_k^{i,j} \subset \partial E_k^{i,j}$ is a copy of~$M(P(U))$.
Let $\eta_k=\eta_k^{i,j}\subset M_k^{i,j}$ be the corresponding copy of the axis of~$P$.
Since $\eta_k$ is identified with $\mu_k$ in $M(J_{k+1}^i)^j$ and $P$ has winding number zero,  we have $\mu_k\in \langle \mu_{k+1}\rangle^{(1)}$, where $\langle - \rangle$ denotes the subgroup of $\pi_1(W_k)$ normally generated by~$-$.
Inductively, it follows that $\mu_k\in \langle \mu_n\rangle^{(n-k)}$. Therefore, $\mu_k\in \pi_1(W_k)^{(n-k)} \subset \cP^{n-k}\pi_1(W_k)$. (See also the last paragraph of \cite[p.~4799]{Cha:2010-01}.)

Next, we assert that 
\begin{equation}\label{equation:quotient-isomorphism}
\cP^{n-k}\pi_1(W_k)/\cP^{n-k+1}\pi_1(W_k)\cong \cP^{n-k}\pi_1(W_{k+1})/\cP^{n-k+1}\pi_1(W_{k+1}).
\end{equation}
By arguments in \cite[Assertion~1 on p.~4799]{Cha:2010-01}, we have 
\begin{equation}\label{equation:quotient-isomorphism-2}
 \pi_1(W_k)/\pi_1(W_k)^{(n-k+1)}\cong \pi_1(W_{k+1})/\pi_1(W_{k+1})^{(n-k+1)}.
 \end{equation}
Also, from the definitions of $\cP^i\Gamma$, one can see that if a homomorphism $A\to B$ induces $A/A^{(j)}\cong B/B^{(j)}$, then the homomorphism also induces $\cP^i A\cong \cP^i B$ for $i\le j$.
Therefore, the inclusion $W_{k+1}\hookrightarrow W_k$ induces $\cP^i\pi_1(W_k)\cong \cP^i\pi_1(W_{k+1})$ for $i\le n-k+1$, and \eqref{equation:quotient-isomorphism} follows.

By \eqref{equation:quotient-isomorphism}, we only need to show that $\mu_k$ is nontrivial in $\cP^{n-k}\pi_1(W_{k+1})/\cP^{n-k+1}\pi_1(W_{k+1})$. Let $\pi=\pi_1(W_{k+1})$ and $\Gamma=\pi/\cP^{n-k}\pi$.
Let $R=\Q$ when $k>0$ and $R=\Z_{p_1}$ when $k=0$.
Since $\cP^{n-k}\pi/\cP^{n-k+1}\pi$  injects into $H_1(W_{k+1};R[\Gamma])$, it suffices to show that $\mu_k$ is nontrivial in $H_1(W_{k+1};R[\Gamma])$.

To show this, we use the (higher order) Blanchfield forms, following the ideas in~\cite{Cochran-Harvey-Leidy:2009-01} and~\cite{Cha:2010-01}.
Denote the Blanchfield form of $M(J_{k+1}^i)^j$ over~$R[t^{\pm1}]$ by
\[
\Bl\nolimits^R\colon H_1(M(J_{k+1}^i)^j;R[t^{\pm 1}]) \times H_1(M(J_{k+1}^i)^j;R[t^{\pm 1}]) \to R(t)/R[t^{\pm 1 }].
\]
Also, let
\[
\cB\colon H_1(M(J_{k+1}^i)^j;R[\Gamma])\times H_1(M(J_{k+1}^i)^j;R[\Gamma])\to  \cK/R[\Gamma]
\]
be the higher order Blanchfield form of $M(J_{k+1}^i)^j$ over~$R[\Gamma]$ defined in \cite[Theorem~2.13]{Cochran-Orr-Teichner:2003-1}.
Here, $\cK$ is the (skew) field of quotients of $R[\Gamma]$:
since $\cP^i\pi/\cP^{i+1}\pi$ is torsion-free for $0\le i \le n-1$ by the definition of $\cP^{i+1}\pi$ and since the range of $k$ is $0,\ldots,n-1$, the group $\Gamma = \pi/\cP^{n-k}\pi$ is poly-torsion-free-abelian.
(If $k$ were $-1$, $\pi/\cP^{n+1}\pi$ would be allowed to have $p$-torsion.)
It follows that $R[\Gamma]$ embeds into its (skew) field of quotients~$\cK$, due to \cite[Proposition~2.5]{Cochran-Orr-Teichner:2003-1} when $R=\Q$ (i.e.\ $k>0$) and \cite[Lemma~5.2]{Cha:2010-01} when $R=\Z_{p_1}$ (i.e.\ $k=0$).



We assert that $\cB(\mu_k,\mu_k)\ne0$. By our induction hypothesis that the meridian $\mu_{k+1}$ is nontrivial in the torsion-free abelian group $\cP^{n-k-1}\pi/\cP^{n-k}\pi\subset \Gamma$, one can see that 
\begin{equation}\label{equation:isomorphism-Alexander-modules}
H_1(M(J_{k+1}^i)^j;R[\Gamma])\cong R[\Gamma]\otimes_{R[t^{\pm1 }]} H_1(M(J_{k+1}^i)^j;R[t^{\pm 1}]).
\end{equation}
By \cite[Theorem~4.7]{Leidy:2006-1} and \cite[Theorem~5.4]{Cha:2010-01}, we have $\cB(1\otimes x,1\otimes x)=0$ if and only if $\Bl^R(x,x)=0$.
Also by our assumption on $\eta_k$ and choice of $p_1$, we have $\Bl^R(\eta_k,\eta_k)\ne 0$.
Since $\eta_k$ is identified with $\mu_k$ in $M(J_{k+1}^i)^j$, it follows that $\Bl^R(\mu_k,\mu_k)\ne 0$, and hence $\cB(1\otimes \mu_k,1\otimes \mu_k)\ne 0$.
Therefore, viewing $\mu_k=1\otimes \mu_k$ as an element in $H_1(M(J_{k+1}^i)^j;R[\Gamma])$ via the isomorphism~\eqref{equation:isomorphism-Alexander-modules}, it follows that $\cB(\mu_k,\mu_k)\ne 0$.

Now we invoke the following lemma, which will be proved in Section~\ref{subsection:Blanchfield-bordism}.

\begin{lemma}\label{lemma:Blanchfield-bordism}
The pair $(W_{k+1}, \phi\colon \pi_1(W_{k+1})\to \Gamma)$ is an $R$-coefficient Blanchfield bordism.
\end{lemma}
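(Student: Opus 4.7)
The plan is to verify the defining conditions of an $R$-coefficient Blanchfield bordism from \cite[Definition~4.5]{Cha:2010-01} for the pair $(W_{k+1},\phi)$. That definition packages three kinds of data: (i) the codomain $\Gamma$ lies in Strebel's class $D(R)$ and $\phi$ takes values in a group whose appropriate derived subgroup vanishes; (ii) $\phi$ restricts on each boundary component to the canonical coefficient system dictated by the $\cP^\bullet$-filtration; and (iii) $H_2(W_{k+1};R[\Gamma])$ carries a lagrangian with dual, inherited from an integral solution. Condition~(i) and the structural aspect of (ii) are consequences of the proof of~(H1), so the real work is to propagate the lagrangian-with-dual structure and the coefficient-system compatibility through the glued pieces of~$W_{k+1}$.

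I would proceed by induction on the index of $W$, starting from the base case $W_n = V \cup_{M(J)} C$ and extending via the construction $W_l = W_{l+1} \cup \bigsqcup_{i,j} \epsilon_i E_l^{i,j}$. For the base case, $V$ is an integral $(n+1)$-solution and so already carries the required lagrangian with dual over the relevant quotient of $R[\pi_1(V)]$. Item~(2) of the construction of $W_0$ gives surjectivity $H_2(\partial C;\Z[G]) \to H_2(C;\Z[G])$ for every group~$G$; a standard change-of-rings argument transports this to $R[\Gamma]$-coefficients, so gluing $C$ onto $V$ does not alter the lagrangian-with-dual structure. For the inductive step, gluing in the cobordisms $E_l^{i,j}$ preserves the structure by invoking the analogous surjectivity $H_2(\partial E_l^i;R[\Gamma])\to H_2(E_l^i;R[\Gamma])$, available from item~(3) of the construction and \cite[Lemma~2.4]{Cochran-Harvey-Leidy:2009-01}. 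A Mayer--Vietoris computation then shows that the lagrangian-with-dual descends to each~$W_l$, and in particular to~$W_{k+1}$.

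The step I expect to require the most care is the coefficient-system compatibility, item~(ii). Concretely, one must check that for each piece $E_l^{i,j}$ the induced map $\pi_1(E_l^{i,j})\to \Gamma = \pi_1(W_{k+1})/\cP^{n-k}\pi_1(W_{k+1})$ factors through a sufficiently deep term of the $\cP^\bullet$-filtration, so that the intersection form over $R[\Gamma]$ is actually detected by the lagrangian in $V$ and is not polluted by contributions of the glued pieces. This relies on the filtration isomorphism $\pi_1(W_k)/\pi_1(W_k)^{(j)} \cong \pi_1(W_{k+1})/\pi_1(W_{k+1})^{(j)}$ for $j \le n-k+1$, already used in the proof of~(H2) via \cite[Assertion~1, p.~4799]{Cha:2010-01}, combined with the fact that $\Gamma$ is poly-torsion-free-abelian when $k>0$ and poly-(torsion-free or $p_1$-primary abelian) when $k=0$ (\cite[Lemma~4.3]{Cha:2010-01}), which ensures that $R[\Gamma]$ embeds in its (skew) field of quotients. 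Once this bookkeeping is in place, the Blanchfield bordism formalism of \cite[Section~4]{Cha:2010-01} yields the conclusion.
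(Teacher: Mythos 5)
Your overall strategy — decompose $W_{k+1}$ into $V\cup C\cup\bigl(\bigsqcup E_\ell^{i,j}\bigr)$, start from the integral solution $V$, and propagate structure through the gluings using the injectivity/surjectivity of homology of $C$ and the $E$'s — does track the paper's argument. But there is a genuine gap in the middle. The paper does not verify the Blanchfield bordism conditions directly. It introduces a new intermediate notion, an \emph{$R$-coefficient Lagrangian bordism} (Definition~\ref{definition:lagrangian-bordism}), whose content is precisely quantitative: condition~(1) supplies a lagrangian-with-dual of size $m$ in $H_2(W;RG)$, and condition~(2) demands $\rank_R\Coker\{H_2(\partial W;R)\to H_2(W;R)\}\le 2m$. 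Lemma~\ref{lemma:integral-solution-lagrangian-bordism} gives this for $V$, and Lemma~\ref{lemma:lagrangian-bordism-extension} together with Lemma~\ref{lemma:C-and-E-injectivity} propagate \emph{both} conditions — not just the lagrangian-with-dual — through the gluings. Your sketch claims that ``the lagrangian-with-dual descends to each $W_l$,'' which addresses only condition~(1). That by itself is not a Blanchfield bordism: one needs the lagrangian to account for essentially half the rank of $H_2(W_{k+1};\cK\Gamma)$, and that is exactly what the quantitative bound~(2) plus Proposition~\ref{proposition:lagrangian-Blanchfield-bordism} achieve.

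The missing bridge is Proposition~\ref{proposition:lagrangian-Blanchfield-bordism}, which says a Lagrangian bordism is a Blanchfield bordism \emph{provided} $\rank_R H_1(M;R)=1$ for each boundary component $M$. This hypothesis is essential (and indeed holds here because every boundary component is a zero-surgery manifold), and the proof of the proposition runs through Lemma~\ref{lemma:rank-comparison}, a rank comparison between $R$-coefficient and $\cK\Gamma$-coefficient cokernels adapted from \cite[Lemma~5.10(1)]{Cochran-Harvey-Leidy:2009-01}. Your proposal never invokes any comparison between $R$-ranks and $\cK\Gamma$-dimensions, so as written it cannot reach the Blanchfield bordism conclusion. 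You should also double-check your reference: the definition of Blanchfield bordism used in the paper is \cite[Definition~4.11]{Cha:2014-1}, not a definition in \cite{Cha:2010-01}, and your itemized ``three kinds of data (i)--(iii)'' do not correspond to that definition.
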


Here, we use the notion of an \emph{$R$-coefficient Blanchfield bordism} in the sense of \cite[Definition~4.11]{Cha:2014-1}.
The only property of a Blanchfield bordism we need to complete the proof of Proposition~\ref{proposition:computation-of-rho-invariant-of-J_0^i} is that for the kernel
\[
M=\Ker \{H_1(M(J_{k+1}^i)^j;R[\Gamma])\rightarrow H_1(W_{k+1};R[\Gamma])\}
\] 
of the inclusion-induced map, we have $\cB(x,x)=0$ for all $x\in M$ (see \cite[Theorem~4.12]{Cha:2014-1}). 

From this, it follows that $\mu_k\notin M$ since $\cB(\mu_k,\mu_k)\ne 0$. Therefore, $\mu_k$ is nontrivial in $H_1(W_{k+1};R[\Gamma])$. This completes the proof of Proposition~\ref{proposition:computation-of-rho-invariant-of-J_0^i} modulo the proof of Lemma~\ref{lemma:Blanchfield-bordism}, which is given below.
  \def\qedsymbol{} 
\end{proof}

\subsection{Lagrangian bordism and Blanchfield bordism}\label{subsection:Blanchfield-bordism}
In this subsection, we give a proof of Lemma~\ref{lemma:Blanchfield-bordism}. 
We also discuss a generalization of Theorem~\ref{theorem:main} at the end of the subsection. 

Throughout this subsection, we suppose the following: let $R=\Z_p$ or $\Q$. Let $W$ be a compact connected 4-manifold with boundary and $\psi\colon \pi_1(W)\to G$ be a homomorphism where $G$ is a poly-torsion-free-abelian group. Let $\cK G$ be the (skew) field of quotients of $RG$. Let $\lambda_G$ be the equivariant intersection form 
\[
\lambda_G\colon H_2(W;RG)\times H_2(W;RG)\to RG.
\]

The following definition is inspired by the notion of an $n$-bordism in the sense of \cite[Definition~5.2]{Cochran-Harvey-Leidy:2009-01} and \cite[Definition~5.5]{Cha:2010-01} and by \cite[Theorem~4.13]{Cha:2014-1}.

\begin{definition}\label{definition:lagrangian-bordism}
A pair $(W,\psi)$ is an \emph{$R$-coefficient Lagrangian bordism} if it satisfies the following.
\begin{enumerate}
	\item There exist elements $\ell_1,\ldots, \ell_m$, $d_1,\ldots, d_m \in H_2(W;RG)$ such that $\lambda_G(\ell_i,\ell_j)=0$ and $\lambda_G(\ell_i,d_j)=\delta_{ij}$, $1\le i,j\le m$. 
	\item $\rank_R \Coker\{H_2(\partial W;R)\to H_2(W;R)\}\le 2m$. 
\end{enumerate}
\end{definition}

If $W$ is an integral $n$-solution, then $H_2(\partial W;R)\to H_2(W;R)$ is trivial. Therefore, the lemma below immediately follows from the definition of an integral $n$-solution (see \cite[Definition~3.1]{Cha:2010-01}).

\begin{lemma}\label{lemma:integral-solution-lagrangian-bordism}
Suppose that $W$ is an integral $n$-solution. Then, for any homomorphism $\psi\colon \pi_1(W)\to G$ that factors through $\pi_1(W)/\pi_1(W)^{(n)}$, the pair $(W,\psi)$ is an $R$-coefficient Lagrangian bordism.
\end{lemma}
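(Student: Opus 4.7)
The plan is to verify the two conditions of Definition~\ref{definition:lagrangian-bordism} directly from the definition of an integral $n$-solution $W$, using the hypothesis that $\psi$ factors through $\pi_1(W)/\pi_1(W)^{(n)}$ to transport the data.

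First, I would unpack the definition of an integral $n$-solution (\cite[Definition~3.1]{Cha:2010-01}), which supplies elements $\tilde{\ell}_1, \ldots, \tilde{\ell}_m, \tilde{d}_1, \ldots, \tilde{d}_m \in H_2(W; \Z[\pi_1(W)/\pi_1(W)^{(n)}])$ such that the equivariant intersection form satisfies $\lambda(\tilde{\ell}_i, \tilde{\ell}_j) = 0$ and $\lambda(\tilde{\ell}_i, \tilde{d}_j) = \delta_{ij}$, together with the integrality/rank constraints on $H_2(W;\Z)$. Since $\psi$ factors as $\pi_1(W) \to \pi_1(W)/\pi_1(W)^{(n)} \to G$, there is an induced ring homomorphism $\Z[\pi_1(W)/\pi_1(W)^{(n)}] \to RG$ sending $1$ to $1$. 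Define $\ell_i$ and $d_i$ in $H_2(W; RG)$ as the images of $\tilde{\ell}_i$ and $\tilde{d}_i$ under the change-of-coefficients map.

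Next, I would invoke naturality of the equivariant intersection form under change of coefficients: because the ring map preserves $1$ and is a homomorphism, the identities $\lambda_G(\ell_i, \ell_j) = 0$ and $\lambda_G(\ell_i, d_j) = \delta_{ij}$ follow immediately from the corresponding identities over $\Z[\pi_1(W)/\pi_1(W)^{(n)}]$. This secures condition (1) of Definition~\ref{definition:lagrangian-bordism}.

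For condition (2), I would use the fact noted immediately before the lemma, namely that $H_2(\partial W; R) \to H_2(W; R)$ is trivial for an integral $n$-solution, so the cokernel in question is all of $H_2(W; R)$. It then remains to verify $\rank_R H_2(W; R) \le 2m$, which follows from the rank constraint $\rank_\Z H_2(W;\Z) = 2m$ built into the definition of an integral $n$-solution, combined with the universal coefficient theorem (which cannot increase the rank upon tensoring with the field $R = \Q$ or $\Z_p$). The one routine point to check carefully is this last rank estimate, but given the explicit integral rank condition in the definition, this is bookkeeping rather than a genuine obstacle; there is no substantial difficulty in the proof beyond tracking definitions and naturality.
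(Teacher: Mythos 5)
Your proposal is structurally correct and takes essentially the same route as the paper's own (very terse) argument: the paper simply observes that $H_2(\partial W;R)\to H_2(W;R)$ is trivial for an integral $n$-solution and then declares the lemma to follow immediately from Definition~3.1 of~\cite{Cha:2010-01}. Your unpacking of this — pushing the $(n)$-Lagrangian and duals forward under the ring map $\Z[\pi_1(W)/\pi_1(W)^{(n)}]\to RG$ induced by the factorization of $\psi$, invoking naturality of the equivariant intersection form, and identifying the cokernel in condition~(2) with $H_2(W;R)$ — is exactly the intended reasoning.

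One imprecision worth correcting is the parenthetical claim that the universal coefficient theorem ``cannot increase the rank upon tensoring with the field $R = \Q$ or $\Z_p$.'' That statement is false in general: if $H_2(W;\Z)$ had $p$-torsion, then $\rank_{\Z_p}\bigl(H_2(W;\Z)\otimes\Z_p\bigr)$ would exceed $\rank_\Z H_2(W;\Z)$, and the $\operatorname{Tor}$ term in UCT can also contribute. What actually secures the rank estimate is a stronger hypothesis than ``$\rank_\Z H_2(W;\Z)=2m$'': the definition of an (integral) $n$-solution, following Cochran--Orr--Teichner, requires the images of the $(n)$-Lagrangian and its dual to \emph{freely generate} $H_2(W;\Z)$, so that $H_2(W;\Z)\cong\Z^{2m}$ is torsion-free. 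Combined with $H_1(W;\Z)\cong\Z$ (so the $\operatorname{Tor}$ term vanishes), UCT then gives $H_2(W;R)\cong R^{2m}$ on the nose. You should cite this freeness rather than merely the integral rank; with that correction, the argument is complete.
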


Recall that Lemma~\ref{lemma:Blanchfield-bordism} asserts that $(W_{k+1},\phi)$ is an $R$-coefficient Blanchfield bordism.
In the proof of Lemma~\ref{lemma:Blanchfield-bordism}, we do not directly verify that $(W_{k+1},\phi)$ satisfies the definition of an $R$-coefficient Blanchfield bordism given in~\cite[Definition~4.11]{Cha:2014-1} but use the following fact.

\begin{proposition}\label{proposition:lagrangian-Blanchfield-bordism} 
Suppose that $\rank_R H_1(M;R) = 1$ for each component $M$ of $\partial W$. If $(W,\psi)$ is an $R$-coefficient Lagrangian bordism, then it is an $R$-coefficient Blanchfield bordism.
\end{proposition}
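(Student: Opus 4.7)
The plan is to adapt the ``lagrangian-generates'' argument that appears in the $n$-solvable setting, cf.\ \cite[Theorem~4.13]{Cha:2014-1} and \cite{Cochran-Harvey-Leidy:2009-01}, where the rank-one boundary hypothesis is the device that makes the $\cK G$-homology of $\partial W$ degenerate and so makes the Lagrangian fill up all of $H_2(W;\cK G)$.

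First I would set up the target: let $N=\Ker\{H_1(\partial W;RG)\to H_1(W;RG)\}$; by the long exact sequence of $(W,\partial W)$ in $RG$-coefficients, $N$ is the image of the connecting map $H_2(W,\partial W;RG)\to H_1(\partial W;RG)$. The rank-one hypothesis on each boundary component $M$, combined with the fact that $\psi$ is nontrivial on $H_1(M;R)$ (as will be arranged in every application), forces $H_\ast(M;\cK G)=0$ and hence $H_\ast(\partial W;\cK G)=0$. From the $\cK G$-coefficient long exact sequence of $(W,\partial W)$ together with Poincar\'e--Lefschetz duality, one gets $H_2(W;\cK G)\cong H_2(W,\partial W;\cK G)$, and the $\cK G$-valued equivariant intersection form on $H_2(W;\cK G)$ is nonsingular.

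Next, I would use condition~(ii) of Definition~\ref{definition:lagrangian-bordism} to establish the dimension bound $\dim_{\cK G} H_2(W;\cK G)\le 2m$ via a rank/Euler-characteristic comparison between $R$-homology and $\cK G$-homology along the Ore embedding $RG\hookrightarrow\cK G$. The hyperbolic plane spanned by $\ell_1,\dots,\ell_m,d_1,\dots,d_m$ already has $\cK G$-dimension $2m$ and is nondegenerate by condition (i), so the bound forces $H_2(W;\cK G)=\cK G\langle\ell_i,d_j\rangle$ with the intersection form a sum of hyperbolic planes. Now, given $x\in N$, pick $y\in H_2(W,\partial W;RG)$ with $\partial y=x$ and let $\tilde y\in H_2(W;\cK G)$ be its image under the above isomorphism. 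Write $\tilde y=\sum a_i\ell_i+\sum b_j d_j$. Under the identification $\cB(x,x) = \lambda_{\cK G}(\tilde y,\tilde y)\bmod RG$, the coefficients $b_j=\lambda_{\cK G}(\tilde y,\ell_j)$ lie in $RG$ (since $\ell_j\in H_2(W;RG)$, the pairing factors through an $RG$-valued pairing with the $RG$-representative $y$). So the combination $\sum b_j d_j$ represents an integral class in $H_2(W;RG)$; after subtracting its preimage in $H_2(W,\partial W;RG)$ from $y$, one may assume $\tilde y=\sum a_i\ell_i$, and then $\lambda_{\cK G}(\tilde y,\tilde y)=\sum a_i\bar a_k\,\lambda_G(\ell_i,\ell_k)=0$ by condition (i). Hence $\cB(x,x)=0$.

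The main obstacle is the dimension bound in the middle paragraph: condition~(ii) is formulated as an $R$-rank inequality while the conclusion must be read as a $\cK G$-dimension inequality. Bridging the gap requires the poly-torsion-free-abelian hypothesis on $G$ (so that $RG$ embeds into the skew field $\cK G$), a careful Euler-characteristic comparison between $H_\ast(W;R)$, $H_\ast(W,\partial W;R)$, and their $\cK G$-analogues, and the vanishing $H_\ast(\partial W;\cK G)=0$ from Step~1. Once this bound is in hand, the final pairing computation is essentially formal, so the proof reduces to getting the dimension count right under the two hypotheses on $W$ and on the boundary components.
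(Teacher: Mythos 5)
Your argument is essentially the paper's proof unpacked: the paper simply cites \cite[Lemma~4.15]{Cha:2014-1} (which handles the Lagrangian-implies-Blanchfield step via the $\cK G$-dimension bound) together with Lemma~\ref{lemma:rank-comparison} (the $R$-rank versus $\cK G$-dimension comparison you correctly identify as the main obstacle), and you have reconstructed both. Two small imprecisions worth noting: your step one requires $\psi$ to be nontrivial on each boundary component, a hypothesis not in the statement of the proposition (though satisfied in all of the paper's applications, and the paper's Lemma~\ref{lemma:rank-comparison} proves the inequality without that assumption); and ``the intersection form a sum of hyperbolic planes'' is not forced, since Definition~\ref{definition:lagrangian-bordism}~(1) leaves $\lambda_G(d_i,d_j)$ unconstrained — but your subsequent computation only uses $\lambda_G(\ell_i,\ell_j)=0$ and $\lambda_G(\ell_i,d_j)=\delta_{ij}$, so this does not affect the conclusion.
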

\begin{proof}
  By \cite[Lemma~4.15]{Cha:2014-1}, $(W,\psi)$ is an $R$-coefficient Blanchfield bordism if the following hold.
  \begin{enumerate}
    \item There exist elements $\ell_1,\ldots, \ell_m$, $d_1,\ldots, d_m \in H_2(W;RG)$ such that $\lambda_G(\ell_i,\ell_j)=0$ and $\lambda_G(\ell_i,d_j)=\delta_{ij}$, $1\le i,j\le m$. 
    \item $\dim_{\cK G} \Coker \{H_2(\partial W;\cK G)\rightarrow H_2(W;\cK G)\}\le 2m$.
  \end{enumerate}
  Definition~\ref{definition:lagrangian-bordism}~(1) implies~(1).
  Definition~\ref{definition:lagrangian-bordism}~(2) implies (2) by Lemma~\ref{lemma:rank-comparison} below.
\end{proof}


\begin{lemma}
\label{lemma:rank-comparison}
Suppose $\rank_R H_1(M;R) = 1$ for each component $M$ of $\partial W$. Then,
\[
\dim_{\cK G} \Coker \{H_2(\partial W;\cK G)\rightarrow H_2(W;\cK G)\}\le \rank_R \Coker\{H_2(\partial W;R)\rightarrow H_2(W;R)\}.
\]
\end{lemma}
\begin{proof}
  In \cite[Lemma~5.10~(1)]{Cochran-Harvey-Leidy:2009-01}, it was shown that the equality 
  \[
    \dim_{\cK G} \Coker \{H_2(\partial W;\cK G)\rightarrow H_2(W;\cK G)\} = \rank_R \Coker\{H_2(\partial W;R)\rightarrow H_2(W;R)\}.
  \] 
  holds under a stronger hypothesis.
  Examining the proof of~\cite[Lemma~5.10~(1)]{Cochran-Harvey-Leidy:2009-01}, which analyzes the long exact sequences of $(W,\partial W)$ with coefficients in $R$ and $\cK G$, one sees that the same argument shows the desired inequality under our weaker hypothesis.
  For the reader's convenience, we describe details below.
  We remark that we do not use that $W$ is an $R$-coefficient Lagrangian bordism in this proof.

  If $\psi\colon \pi_1(W)\to G$ is trivial, then one can readily obtain the desired inequality (actually equality in this case). Suppose that $\psi$ is nontrivial.
  For brevity, let 
  \begin{equation*}
      \begin{aligned}
      r &=\rank_R \Coker\{H_2(\partial W;R)\rightarrow H_2(W;R)\} \\
      r' &=\dim_{\cK G} \Coker \{H_2(\partial W;\cK G)\rightarrow H_2(W;\cK G)\}
      \end{aligned}
  \end{equation*}
  and denote by $\beta_i$ and $b_i$ be the $i$th Betti numbers with $R$-coefficients and $\cK G$-coefficients, respectively.
  Using the homology long exact sequence for $(W,\partial W)$ with $R$-coefficients and the Poincar\'{e} duality $\beta_i(W)=\beta_{4-i}(W,\partial W)$, one can readily see that 
  \[
  r=\chi(W)+2\beta_1(W)-2-\beta_1(\partial W)+\beta_0(\partial W),
  \]
  where $\chi$ is the Euler characteristic.
  For the $\cK G$-coefficient case, since $\psi$ is nontrivial, $b_0(W)=0$ by \cite[Propositions~2.9]{Cochran-Orr-Teichner:2003-1}, and the the same argument gives 
  \[
  r'=\chi(W)+2b_1(W) - b_1(\partial W)+b_0(\partial W).
  \]
  Therefore,
  \[
  r-r' = 2(\beta_1(W)-1-b_1(W)) - (\beta_1(\partial W) - b_1(\partial W)) + (\beta_0(\partial W) - b_0(\partial W)).
  \]
  Since $\psi$ is nontrivial, we have $\beta_1(W)-1-b_1(W)\ge 0$ by \cite[Proposition~2.11]{Cochran-Orr-Teichner:2003-1}.
  So, to complete the proof, it suffices to show that for each component $M$ of $\partial W$
  \[
    \beta_1(M) - b_1(M) = \beta_0(M) - b_0(M).
  \]
  It holds if $\psi$ induces a trivial homomorphism of $\pi_1(M)$, since $\beta_i(M)=b_i(M)$ in this case.
  Otherwise, our hypothesis $\beta_1(M) = 1$ implies that $b_1(M)=0$ by \cite[Propositions~2.11]{Cochran-Orr-Teichner:2003-1}.
  Also $b_0(M)=0$ by \cite[Propositions~2.9]{Cochran-Orr-Teichner:2003-1}.
  Since $\beta_0(M) = 1$, the promised equality holds.
\end{proof}

In the following lemma, the main examples to keep in mind are $W=V$ and $X=C$ or $E^i_k$ in the proof of Theorem~\ref{theorem:main}.

\begin{lemma}\label{lemma:lagrangian-bordism-extension}
Let $W$ and $X$ be compact connected 4-manifolds with boundary. Suppose $\partial X= \partial_1 X\sqcup \partial_2 X$ where each $\partial_i X$ is a union of components of $\partial X$ and $W\cap X = \partial_1 X$. Let $W'=W\cup X$ and $\psi\colon \pi_1(W')\to G$ be a homomorphism. Suppose the following:
\begin{enumerate}
\item the inclusion induces an injection $H_i(\partial_1 X;R)\to H_i(X;R)$ for $i=1,2$, and
\item induces a surjection $H_2(\partial_2 X;R)\to H_2(X;R)$.
\end{enumerate}
If additionally $(W,\psi|_{\pi_1(W)})$ is an $R$-coefficient Lagrangian bordism, $(W',\psi)$ is an $R$-coefficient Lagrangian bordism.
\end{lemma}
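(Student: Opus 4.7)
My plan is to verify the two conditions of Definition~\ref{definition:lagrangian-bordism} for $(W',\psi)$ by transferring the given Lagrangian bordism structure on $W$ through the inclusion $i\colon W\hookrightarrow W'$.

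For condition (1), I would take the pushforwards $i_*\ell_1,\ldots,i_*\ell_m$ and $i_*d_1,\ldots,i_*d_m$ in $H_2(W';RG)$. Since the twisted coefficient system on $W$ is the restriction via $\psi$ of the one on $W'$, and since the classes $\ell_i,d_i$ are represented equivariantly in the $G$-cover by closed surfaces in the interior of $W$, their transverse equivariant intersection counts are literally unchanged when the same surfaces are viewed inside $W'$. Hence
\[
\lambda_G(i_*\ell_i,i_*\ell_j)=\lambda_G(\ell_i,\ell_j)=0 \quad\text{and}\quad \lambda_G(i_*\ell_i,i_*d_j)=\delta_{ij}.
\]

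For condition (2), I would apply Mayer--Vietoris to $W'=W\cup_{\partial_1 X}X$ with $R$-coefficients:
\[
H_2(\partial_1 X;R)\to H_2(W;R)\oplus H_2(X;R)\to H_2(W';R)\to H_1(\partial_1 X;R)\to H_1(W;R)\oplus H_1(X;R).
\]
Hypothesis~(1) with $i=1$ forces the last arrow to be injective, giving
\[
H_2(W';R)\;\cong\;\bigl(H_2(W;R)\oplus H_2(X;R)\bigr)\big/\operatorname{im}\bigl(H_2(\partial_1 X;R)\bigr).
\]
Writing $\partial W=\partial_1 X\sqcup\partial_{\mathrm{other}}W$ and $\partial W'=\partial_{\mathrm{other}}W\sqcup\partial_2 X$, hypothesis~(2) makes the image of $H_2(\partial_2 X;R)$ in $H_2(W';R)$ account for the entire $H_2(X;R)$-summand modulo the relations from $\partial_1 X$. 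A direct diagram chase then yields a natural isomorphism
\[
\Coker\bigl(H_2(\partial W';R)\to H_2(W';R)\bigr)\;\cong\;\Coker\bigl(H_2(\partial W;R)\to H_2(W;R)\bigr),
\]
whose $R$-rank is bounded by $2m$ by the Lagrangian bordism hypothesis on $W$, establishing condition~(2) with the same integer $m$.

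The step requiring the most care is the intersection-pairing transfer in condition~(1): one must be sure that the equivariant $RG$-intersection form on a 4-manifold with boundary is preserved under inclusion into a larger 4-manifold, which I would justify via the geometric representatives in the $G$-covers described above. Hypothesis~(1) with $i=2$ enters naturally here, guaranteeing that the analogous Mayer--Vietoris in $RG$-coefficients is compatible with the pairing, so that no cancellation in $H_2(W';RG)$ destroys the Lagrangian/dual relations. Once this compatibility is in place, condition~(2) reduces to the straightforward bookkeeping outlined above.
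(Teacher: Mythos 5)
Your proof is correct and follows essentially the same two-step route as the paper: push the Lagrangian and dual classes forward to $H_2(W';RG)$, where the equivariant intersection numbers are literally unchanged, and then bound the rank of the cokernel via Mayer--Vietoris for $W'=W\cup_{\partial_1X}X$; your diagram chase in fact produces an isomorphism of cokernels, which is slightly stronger than the inequality the paper asserts. The one inaccuracy is your closing paragraph's attribution of a role to hypothesis~(1) with $i=2$: the transfer of the equivariant intersection pairing under $W\hookrightarrow W'$ needs no hypothesis at all (it is automatic from the restriction of the coefficient system and the geometric description of $\lambda_G$ via transverse representatives pushed into the interior), and your own cokernel computation uses only $i=1$ injectivity (to kill the connecting map $H_2(W';R)\to H_1(\partial_1 X;R)$) together with hypothesis~(2). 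Since the argument as written never actually invokes $i=2$, this misattribution does not affect the validity of the proof, but the claimed justification should be dropped.
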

\begin{proof}
If $\ell_1,\ldots, \ell_m$, $d_1,\ldots, d_m$ are elements in $H_2(W;RG)$ such that $\lambda_G(\ell_i,\ell_j)=0$ and $\lambda_G(\ell_i,d_j)=\delta_{ij}$, then they satisfy the same intersection form conditions on $H_2(W';RG)$. 

By (1) and (2) and an application of the Mayer--Vietoris sequence associated with $W'=W\cup X$, one can readily see that 
\[
\rank_R \Coker\{H_2(\partial W;R)\rightarrow H_2(W;R)\}\ge \rank_R \Coker\{H_2(\partial W';R)\rightarrow H_2(W';R)\}.
\]
Therefore, $\rank_R \Coker\{H_2(\partial W';R)\to H_2(W';R)\}\le 2m$.
\end{proof}

Recall that for knots $K_1,\ldots, K_n$, there is a standard cobordism $C=C(a_1K_1,\ldots, a_nK_n)$ such that
\[
\partial_- C = M(a_1K_1\# \cdots \# a_nK_n) \quad\text{ and }\quad \partial_+ C = \bigsqcup_{i=1}^na_iM(K_i).
\]
Also recall that for a knot $K$ and a satellite operator $P$ with axis $\eta$, there is a cobordism
$E(P,K)$ such that 
\[
\partial_- E(P,K) = M(P(K)) \quad\text{ and }\quad \partial_+ E(P,K) = M(K)\sqcup M(P(U)).
\]
The following lemma is well known (e.g. see the first paragraph of \cite[p.~114]{Cochran-Orr-Teichner:2004-1} and \cite[Lemma~2.5(3)]{Cochran-Harvey-Leidy:2009-01}), and it follows from a direct computation of the relevant homology cobordism groups using relative chain complexes.

\begin{lemma}\label{lemma:C-and-E-injectivity}
Let $X = C$ or $E(P,K)$ as above. Then, the inclusions induce an injection $H_i(\partial_- X;R)\to H_i(X;R)$ for $i=1,2$, and a surjection $H_2(\partial_+ X;R)\to H_2(X;R)$.
\end{lemma}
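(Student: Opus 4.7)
The plan is to analyze the two cobordisms $C$ and $E(P,K)$ separately using their explicit constructions, and then to deduce the injectivity and surjectivity statements from the resulting computations of relative homology with $R$-coefficients.

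For $X = C(a_1K_1,\ldots,a_nK_n)$, I would use the standard handle decomposition in which $C$ is obtained from $\partial_- C \times [0,1]$ by attaching $N-1$ cancelling pairs of $1$- and $2$-handles (with $N = \sum_i |a_i|$): the $1$-handles unsum the connected sum into a disjoint union of zero-surgeries, and the $2$-handles install the correct zero framings. From the associated relative cellular chain complex one reads off $H_j(C,\partial_- C;R) = 0$ for $j \le 1$ and that $H_2(C,\partial_- C;R)$ is $R$-free, so the long exact sequence of the pair $(C,\partial_- C)$ immediately yields the desired injectivity for $i=1,2$. For the surjectivity assertion, I would invoke Poincaré–Lefschetz duality to switch the roles of the two boundary components: the vanishing of $H_j(C,\partial_- C;R)$ for small $j$ combined with the universal coefficient theorem translates into vanishing of $H_j(C,\partial_+ C;R)$ for $j \le 2$, and the long exact sequence of $(C,\partial_+ C)$ then delivers the surjection $H_2(\partial_+ C;R) \to H_2(C;R)$.

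For $X = E(P,K)$, I would describe the infection cobordism as a union of $M(P(U))\times [0,1]$ and $M(K)\times [0,1]$ glued along a thickened torus $T^2 \times [0,1]$ that identifies $\partial\nu(\eta)$ with $\partial\nu(K)$ via the usual meridian–longitude swap. A Mayer–Vietoris computation with $R$-coefficients then expresses $H_*(E;R)$ in terms of $H_*(M(P(U));R)$, $H_*(M(K);R)$, and $H_*(T^2;R)$, with the maps induced by the inclusions of $\partial_\pm E$ readable from the sequence. Tracing through, the injectivity from $\partial_- E = M(P(K))$ in degrees $1$ and $2$ and the surjectivity from $\partial_+ E = M(K)\sqcup M(P(U))$ in degree $2$ both fall out.

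The main obstacle is the bookkeeping in the Mayer–Vietoris step for $E(P,K)$: one has to verify carefully that the generators of $H_1(T^2)$ and $H_2(T^2)$ map correctly under the two inclusions so that the sequence collapses in the asserted way, which uses the precise meridian–longitude identification coming from the satellite construction. Since this lemma is standard, an alternative route is simply to invoke the computations already made in \cite[p.~114]{Cochran-Orr-Teichner:2004-1} and \cite[Lemma~2.5(3)]{Cochran-Harvey-Leidy:2009-01}.
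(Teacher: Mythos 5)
The paper itself does not give a proof of this lemma; it simply cites the relevant computations in \cite[p.~114]{Cochran-Orr-Teichner:2004-1} and \cite[Lemma~2.5(3)]{Cochran-Harvey-Leidy:2009-01}, which is exactly the ``alternative route'' you offer in your last sentence. Your Mayer--Vietoris outline for $E(P,K)$ is, modulo the bookkeeping you flag, essentially how \cite{Cochran-Harvey-Leidy:2009-01} carry it out in their Lemma~2.4 and Lemma~2.5, so that half of your sketch is on the right track.

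Your handle-theoretic sketch for $C$, however, has several genuine problems. First, the handles are attached on the $\partial_+ C$ side, not the $\partial_- C$ side: $C$ is built from $(\bigsqcup_i a_i M(K_i))\times[0,1]$ by attaching $1$-handles (to form connected sums of the zero-surgeries) followed by $2$-handles (to convert $\#_i M(a_iK_i)$ into $M(\#_i a_iK_i)$); from $\partial_-C$ one attaches the dual $2$- and $3$-handles. Second, these $1$- and $2$-handles do \emph{not} cancel --- if they did, $C$ would be a product cobordism, and the two boundaries are not even abstractly diffeomorphic in general. Third, the consequence you draw is false: whenever $\partial_+C$ is disconnected one already has $H_1(C,\partial_+C;R)\ne 0$ (the kernel of $H_0(\partial_+C;R)\to H_0(C;R)$ is nonzero), and $H_2(C,\partial_+C;R)$ is nonzero as well; so the duality argument claiming $H_j(C,\partial_+C;R)=0$ for $j\le2$ cannot work. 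The correct statement is that $H_2(\partial_+C;R)\to H_2(C;R)$ is \emph{onto}, not that the relative group vanishes. Likewise, $H_2(C,\partial_-C;R)$ being $R$-free does not by itself give injectivity of $H_i(\partial_-C;R)\to H_i(C;R)$ --- one still needs the relevant boundary maps in the long exact sequence to vanish, which requires identifying explicit generators (the meridian in $H_1$ and the capped-off Seifert surface in $H_2$, each of which is seen to map nontrivially into $C$). In short, the handle sketch as written would not assemble into a proof; sticking to the cited computations, as the paper does, is the cleaner route.
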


Now we are ready to prove Lemma~\ref{lemma:Blanchfield-bordism}.

\begin{proof}[Proof of Lemma~\ref{lemma:Blanchfield-bordism}]
Let $\pi=\pi_1(W_{k+1})$ and $\Gamma=\pi/\cP^{n-k}\pi$.
Recall that Lemma~\ref{lemma:Blanchfield-bordism} states that the pair $(W_{k+1},\psi\colon \pi\to \Gamma)$ is an $R$-coefficient Blanchfield bordism. 

We have
\[
W_{k+1}=V\cup C\cup \left(\bigsqcup_{i=1}^{m'} a_iE_{n-1}^i\right)\cup \left(\bigsqcup_{i=1}^{m'} a_iE_{n-2}^i\right)\cup \cdots \cup \left(\bigsqcup_{i=1}^{m'} a_iE_{k+1}^i\right). 
\]
Since $V$ is an integral $(n+1)$-solution, it is also an integral $(n-k)$-solution. We have $\Gamma^{n-k}=\{e\}$ since $\pi^{n-k}\subset \cP^{n-k}\pi$. Therefore, it follows from Lemma~\ref{lemma:integral-solution-lagrangian-bordism} that $(V,\psi)$ is an $R$-coefficient Lagrangian bordism.  One can readily see that $(W_{k+1}, \psi)$ is an $R$-coefficient Lagrangian bordism by repeatedly applying Lemmas~\ref{lemma:lagrangian-bordism-extension} and \ref{lemma:C-and-E-injectivity} with $X=C$ or $X=E^i_\ell$, $1\le i\le m'$ and $k+1\le \ell\le n-1$. 

Now, $(W_{k+1},\psi)$ is an $R$-coefficient Blanchfield bordism by Proposition~\ref{proposition:lagrangian-Blanchfield-bordism} since we have $\rank_R H_1(M;R) = 1$ for each boundary component $M$ of $W_{k+1}$. 
\end{proof}

\subsubsection*{Generalization of Theorem~\ref{theorem:main}}
Theorem~~\ref{theorem:main} generalizes to compositions of distinct satellite operators. The proof of Theorem~~\ref{theorem:main} does not use specific properties of the pattern $P(U)$, and this leads us to the following theorem.

\begin{theorem}[{Generalization of Theorem~\ref{theorem:main}}]
\label{theorem:main-generalization}
Let $n\ge 1$. For $1\le k\le n$, let $P_k\colon \cC\to \cC$ be a winding number zero satellite operator with axis $\eta_k$. Let $\Bl_k$ be the Blanchfield form of $P_k(U)$. If $\Bl_k(\eta_k,\eta_k)$ is nontrivial for $1\le k\le n$, then for any winding number zero satellite operator $P_{n+1}\colon \cC\to \cC$, the quotient
\[
\langle (P_n\circ P_{n-1}\circ \cdots \circ P_1)(\cC)\rangle/\langle (P_{n+1}\circ P_n\circ \cdots \circ P_1)(\cC)\rangle\]
has infinite rank. 
\end{theorem}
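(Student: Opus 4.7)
The plan is to follow the proof of Theorem~\ref{theorem:main} essentially verbatim, replacing the iterate $P^n$ by the composition $Q_n := P_n\circ \cdots \circ P_1$ and tracking which Blanchfield form is invoked at each level of the cobordism tower. First I would establish the analog of Lemma~\ref{lemma:affine-inclusion}: since each $P_k$ has winding number zero, so does $Q_{n+1}$, and the standard derived-series argument shows that the axis of the satellite operator $Q_{n+1}\csum (-Q_{n+1}(U))$ lies in $\pi_1(S^3\sm (Q_{n+1}(U)\csum -Q_{n+1}(U)))^{(n+1)}$. Since $Q_{n+1}(U)\csum -Q_{n+1}(U)$ is slice, Lemma~\ref{lemma:construction-of-n-solvable-knots}~(1) gives $Q_{n+1}(K)\csum -Q_{n+1}(U)\in \cF_{n+1}$ for every knot $K$, so $\langle Q_{n+1}(\cC)\rangle\subset \cF_{n+1}+\langle Q_{n+1}(U)\rangle$, and it suffices to prove that $\langle Q_n(\cC)\rangle$ has infinite rank in $\cC/\cF_{n+1}$.

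Next, fix $m\ge 1$ and choose primes $p_1<\cdots<p_m$ so that every mod-$p_i$ reduction $\Bl^{p_i}_k(\eta_k,\eta_k)$ is nontrivial for $1\le k\le n$; this is possible because each $\Bl_k(\eta_k,\eta_k)$ is a fixed nonzero element of $\Q(t)/\Z[t^{\pm 1}]$. Pick a uniform constant $L$ with $|\rhot(M(P_k(U)),\phi)|<L$ for all $k$ and all homomorphisms $\phi$, by applying Theorem~\ref{theorem:universal-bound} to each of the finitely many manifolds $M(P_k(U))$. Build knots $J_0^i$ as in Lemma~\ref{lemma:knots-J_0^i}, set $J_k^i := Q_k(J_0^i)$, and assume for contradiction that $J=\csumover{i}\, a_i J_n^i\in \cF_{n+1}$ with some $a_i\ne 0$. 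Assemble $W_0$ from an integral $(n+1)$-solution $V$ bounded by $M(J)$, the standard cobordism $C$, and the mixing cobordisms $E_k^i := E(P_{k+1},J_k^i)$ for $0\le k\le n-1$. The vanishing $\sign_G^{(2)}(C)-\sign(C)=0$ and the analogous vanishing for each $E_k^i$, together with the surjectivity of $H_2(\partial E_k^i;\Z[G])\to H_2(E_k^i;\Z[G])$, hold for any satellite pattern, so the computation of $\rhot(\partial W_0,\phi)$ goes through exactly as in the proof of Proposition~\ref{proposition:infinite-rank-iteration}, yielding the contradiction $\rhot(\partial W_0,\phi)>0=\sign_G^{(2)}(W_0)-\sign(W_0)$ once we produce $\phi$ satisfying (H1) and (H2).

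The hypothesis on $P_1,\ldots,P_n$ enters in the inductive construction of $\phi$ in Section~\ref{subsection:proof-of-lemma}. In showing that $\mu_k^{i,j}$ is nontrivial in $\cP^{n-k}\pi_1(W_k)/\cP^{n-k+1}\pi_1(W_k)$, the $M(P(U))$-boundary component of $E_k^i$ is replaced by $M(P_{k+1}(U))$, whose axis is $\eta_{k+1}$. The verification $\cB(1\otimes \mu_k,1\otimes \mu_k)\ne 0$ via the higher order Blanchfield form now invokes $\Bl^R_{k+1}(\eta_{k+1},\eta_{k+1})\ne 0$ with $R=\Q$ for $k\ge 1$ and $R=\Z_{p_1}$ for $k=0$; as $k+1$ ranges over $1,\ldots,n$ this is exactly the hypothesis imposed on $P_1,\ldots,P_n$. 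The Lagrangian bordism formalism of Section~\ref{subsection:Blanchfield-bordism} transfers verbatim, since Lemma~\ref{lemma:C-and-E-injectivity} is pattern-independent. The main obstacle is essentially bookkeeping: checking that the primes and the uniform constant $L$ can be chosen across $n$ distinct operators, and confirming that the $\cP^i$-filtration and derived series arguments remain insensitive to the heterogeneity of the cobordism tower. No genuinely new analytic input is required beyond the original proof.
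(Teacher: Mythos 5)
Your proposal is correct and follows essentially the same route as the paper, which records exactly the modifications you describe: the analog of Lemma~\ref{lemma:affine-inclusion} for $P_{n+1}\circ\cdots\circ P_1$, a uniform constant $L$ bounding $|\rhot(M(P_k(U)),\cdot)|$ over all $k$, the composed knots $J_k^i$, the cobordisms $E_k^i=E(P_{k+1},J_k^i)$, and the observation that the level-$k$ Blanchfield nonvanishing used in Proposition~\ref{proposition:computation-of-rho-invariant-of-J_0^i} is that of $P_{k+1}$. One minor over-requirement: you demand the mod-$p_i$ reductions $\Bl_k^{p_i}(\eta_k,\eta_k)$ be nontrivial for all $1\le k\le n$, whereas (as you yourself note at the end) the $\Z_{p_1}$ coefficient is only invoked at the bottom level $k=0$, so the paper needs this only for $\Bl_1$ — your stronger choice is still achievable and harmless, just unnecessary.
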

\begin{proof}
The proof is almost the same as that of Theorem~\ref{theorem:main}, and therefore we describe essential modifications and observations only:
\begin{enumerate}[label=(\arabic*)]
	\item Following the proof of Lemma~\ref{lemma:affine-inclusion}, we can show that 
	\[
	\langle (P_{n+1}\circ P_n\circ \cdots \circ P_1)(\cC)\rangle \subset \cF_{n+1} + \langle (P_{n+1}\circ P_n\circ \cdots \circ P_1)(U)\rangle.
	\]
	\item When we choose the distinct primes $p_1, \ldots, p_m$ using the mod $p$ reduction of the Blanchfield form, we use the Blanchfield form of the pattern $P_1(U)$. 
	\item Let $L$ be a constant such that $|\rhot(M(P_k(U)),\phi)|< L$ for all $\phi$ and $1\le k\le n$.
	\item Let $J_k^i=P_k(J_{k-1}^i)$, $1\le k\le n$.
	\item For $0\le k\le n-1$, let $E_k^i=E(P_{k+1},J_k^i)$. \qedhere
\end{enumerate}
\end{proof}

\section{Satellite operators that are not homomorphisms}\label{section:non-homomorphism}
In this section we prove Theorem~\ref{theorem:non-homomorphism}, which asserts the following:
let $P\colon \cC \to \cC$ be a winding number zero satellite operator with axis~$\eta$. If $\Bl(\eta, \eta)\ne 0$, then $P^n\colon \cC\to \cC$ is not a homomorphism for all $n\ge 1$.

\subsection{Proof of Theorem~\ref{theorem:non-homomorphism}}\label{subsection:proof-of-theorem-nonhomomorphism}
Choose a constant $L>0$ such that 
\[
|\rhot(M(P(U)), \phi)| <L
\] 
for all homomorphisms $\phi\colon \pi_1(M(P(U)))\to~G$, by invoking Theorem~\ref{theorem:universal-bound}. 

Let $J$ be a knot such that
\[
\rhot(M(J),\epsilon)>3nL
\] 
where $\epsilon\colon \pi_1(M(J_1))\to \Z$ is the abelianization. We will show that $P^n(J\# J) \ne P^n(J)\# P^n(J)$ in $\cC$, i.e., the knot
\[
  K=P^n(J)\# P^n(J)\#(-P^n(J\# J))
\] 
is not slice.

For clarity and brevity, let $J^1=J^2=J$ and let $J^0=J\# J$.
Let $J_k^i=P^k(J^i)$ for $0\le k\le n$ and $0\le i\le 2$.
Note that $J^i_0 = P^0(J^i) = J^i$.
We have 
\[
K=J_n^1\# J_n^2\# (-J_n^0).
\] 
Suppose on the contrary that $K$ is slice.   We construct a 4-manifold $W_0$ as follows.
\begin{enumerate}
	\item Let $V$ be the exterior of a slice disk for $K$ in $D^4$. Then $\partial V=M(K)$ and $H_1(V)\cong \Z$. 
	\item Let $C$ be the standard cobordism $C(J_n^1, J_n^2, -J_n^0)$.
  See (2) in Section~\ref{subsection:proof-of-Theorem-A}.  Then, the top boundary of $C$ is
	\[
	\partial_+C = M(J_n^1)\sqcup M(J_n^2) \sqcup( -M(J_n^0))
	\]
  and the bottom boundary is $\partial_- C=-M(K)$. 
    
  \item For $0\le k\le n-1$ and $i=1,2$, let $E_k^i$ be the cobordism $E(P,J_k^i)$.
  See (3) in Section~\ref{subsection:proof-of-Theorem-A}.
  We have
  \[
  \partial E_k^i=M(J_k^i)\sqcup M(P(U)) \sqcup (-M(J_{k+1}^i)).
  \] 
  Similarly, let $E^0_k$ be the standard cobordism $-E(P,J_k^0)$. We have
  \[
  \partial E_k^0=(-M(J_k^0))\sqcup (-M(P(U))) \sqcup M(J_{k+1}^0).
  \]  
    For clarity, we denote the component $M(P(U))\subset \partial E_k^i$ by $M_k^i$, and the axis $\eta \subset M_k^i$ by  $\eta_k^i$ for $0\le k\le n-1$ and $0\le i\le 2$.

\end{enumerate}

Now, define $W_n=V\cup_{M(K)} C$. Then 
\[
\partial W_n=M(J_n^1)\sqcup M(J_n^2)\sqcup (-M(J_n^0)).
\]
For $k= n-1$, $n-2$, \ldots, $0$, define
\[
W_k=W_{k+1}\cupover{M(J_{k+1}^1)} E^1_k \cupover{M(J_{k+1}^2)} E^2_k\cupover{-M(J_{k+1}^0)} E^0_k.
\]
Then, we have 
\[
\partial W_k= \bigsqcup_{i=1}^2\Biggl(M(J_k^i)\sqcup\Biggl(\bigsqcup_{j=k}^{n-1}M_j^i\biggr)\Biggr)\sqcup 
\Biggl((-M(J_k^0))\sqcup\Biggl(\bigsqcup_{j=k}^{n-1}(-M_j^0)\Biggr)\Biggr).
\]
In particular, for $k=0$, we have
\begin{align*}
W_0 &=V\cup C\cup\Biggl(\bigsqcup_{i=0}^2E^i_{n-1}\Biggr)\cup \Biggl(\bigsqcup_{i=0}^2E^i_{n-2}\Biggr) \cup \cdots \cup \Biggl(\bigsqcup_{i=0}^2E^i_0\Biggr), \text{ and }
\\
\partial W_0 &= \bigsqcup_{i=1}^2\Biggl(M(J^i)\sqcup\Biggl(\bigsqcup_{j=0}^{n-1}M_j^i\Biggr)\Biggr)\sqcup 
\Biggl((-M(J^0))\sqcup\Biggl(\bigsqcup_{j=0}^{n-1}(-M_j^0)\Biggr)\Biggr).
\end{align*}
See Figure~\ref{figure:nonhomomorphism} for $W_0$.

\begin{figure}[ht]
  \includestandalone{Figure-nonhomomorphism}
  \caption{The cobordism $W_0$.}
  \label{figure:nonhomomorphism}
\end{figure}

Let $\phi\colon \pi_1(W_0)\to G$ be a homomorphism. By abuse of notation, we denote by $\phi$ restrictions of $\phi$ to subspaces of $W_0$. Recall that $S_G(W_0)=\sign_G^{(2)}(W_0)-\sign(W_0)$. By the definition of $\rhot$-invariants (see Section~\ref{section:preliminary}), we have
\begin{equation}
  \label{equation:section5-rho-invariant-of-W_0}
  \rhot(\partial W_0, \phi) = S_G(W_0).
\end{equation}
To derive a contradiction, we will compute each of $\rhot(\partial W_0, \phi)$ and $S_G(W_0)$ using different methods. 

We compute $S_G(W_0)$ following arguments in the proof of Theorem~\ref{theorem:main}. By Novikov additivity, 
\[
S_G(W_0)= S_G(V) + S_G(C) + \sum_{i=0}^2\sum_{k=0}^{n-1} S_G(E_k^i).
\]
We have $S_G(C)=0$ and $S_G(E_k^i)=0$ for all $i,k$, as discussed in (2) and (3) in Section~\ref{subsection:proof-of-Theorem-A}.

Suppose $\phi\colon \pi_1(W_0)\to G$ satisfies (H3) below. 
\begin{itemize}
	\item[(H3)] The codomain $G$ lies in Strebel's class $D(\Q)$, $G^{(n+1)}=\{e\}$, and $\phi$ sends a meridian of $K$ to an element of infinite order in $G$.
\end{itemize}
Then, it follows from the amenable signature theorem (Theorem~\ref{theorem:vanishing-rho-invariants}) that 
\[
S_G(V)=\rhot(M(K),\phi)=0
\] 
since the slice disk exterior $V$ is an integral $(n.5)$-solution for~$K$.
Therefore, $S_G(W_0)=0$.

Next, we compute $\rhot(\partial W_0,\phi)$. By additivity of $\rhot$-invariants, we have
\begin{align*}
\rhot(\partial W_0,\phi) &= \rhot(M(J^1),\phi) + \rhot(M(J^2),\phi) - \rhot(M(J^0),\phi) 
\\
& \qquad + \sum_{i=1}^2\sum_{k=0}^{n-1}\rhot(M_k^i,\phi) 
-\sum_{k=0}^{n-1}\rhot(M_k^0,\phi).
\end{align*}

Let $\phi_1$, $\phi_2$, and $\phi_0$ be the restrictions of $\phi$ to $\pi_1(M(J^1))$, $\pi_1(M(J^2))$, and $\pi_1(M(J^0))$, respectively.
Suppose that $\phi$ satisfies (H4) below.

\begin{itemize}
	\item[(H4)] The homomorphism $\phi_0$ is trivial.
  Each of $\phi_1$ and $\phi_2$ is trivial or has image isomorphic to~$\Z$.
  Either $\phi_1$ or $\phi_2$ has image isomorphic to~$\Z$.
\end{itemize}
By Lemma~\ref{lemma:properties-of-rho-invariants} and by our choice of $J$, it follows that 
\[
  \rhot(M(J^1),\phi) + \rhot(M(J^2),\phi) > 3nL \quad\text{and}\quad \rhot(M(J^0),\phi)=0.
\]
Therefore, by our choice of $L$
\[
\rhot(\partial W_0,\phi) > 3nL -0 - 2nL - nL =0.
\]
Since $S_G(W_0) = 0$ by the above, it contradicts~\eqref{equation:section5-rho-invariant-of-W_0}.
Therefore, the proof is completed by the proposition below.

\begin{proposition}\label{proposition:computation-of-rho(J)-non-homomorphism}
There exists a homomorphism $\phi\colon \pi_1(W_0)\to G$ that satisfies both (H3) and (H4).
\end{proposition}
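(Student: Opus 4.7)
The plan is to adapt the construction from the proof of Proposition~\ref{proposition:computation-of-rho-invariant-of-J_0^i}, modified to enforce the triviality condition on $\phi_0$ required by (H4). I would first define $G_0 := \pi_1(W_0)/\cP^{n+1}\pi_1(W_0)$ using $R=\Q$ throughout the $\cP^{k}$-construction (in place of $\Z_{p_1}$ at the last stage). The verification of (H3) for the quotient map $\pi_1(W_0)\to G_0$ then proceeds verbatim as in the proof of (H1) in Section~\ref{subsection:proof-of-lemma}: each graded quotient $\cP^k\pi_1/\cP^{k+1}\pi_1$ embeds in a $\Q$-vector space, so $G_0$ is poly-torsion-free-abelian and lies in $D(\Q)$ by Lemma~\ref{lemma:amenable-D(R)-group}(1); $G_0^{(n+1)}=\{e\}$; and since $V$ is a slice-disk exterior and the cobordisms $C$, $E^i_k$ preserve $H_1$, the class $\mu_K$ generates $H_1(W_0)\cong\Z$ and so has infinite order.

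Next, the inductive Blanchfield argument from the verification of (H2), run over $\Q$ together with the hypothesis $\Bl(\eta,\eta)\ne 0$, shows that each of $\mu_{J^0}, \mu_{J^1}, \mu_{J^2}$ is nontrivial in $A := \cP^n\pi_1(W_0)/\cP^{n+1}\pi_1(W_0)$. Letting $\Gamma := \pi_1(W_0)/\cP^n\pi_1(W_0)$, I take $N\subset A$ to be the $\Q[\Gamma]$-submodule generated by the image of $\mu_{J^0}$, set $G := G_0/N$, and let $\phi\colon \pi_1(W_0)\to G$ be the induced composite. Since $A/N$ is still a $\Q$-vector space (a quotient of a $\Q$-vector space by a subspace), the subnormal series of $G_0$ descends to one for $G$ with torsion-free abelian graded quotients, so $G$ remains in $D(\Q)$, still satisfies $G^{(n+1)}=\{e\}$, and since $N\subset G_0^{(n)}\subset G_0^{(1)}$, the class $\mu_K$ continues to generate $G/G^{(1)}\cong\Z$; this confirms (H3) for $\phi$. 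By construction $\phi(\mu_{J^0})=e$, and since $M(J^0)$ is zero-surgery on $J^0$, the group $\pi_1(M(J^0))$ is generated by conjugates of $\mu_{J^0}$, making $\phi_0$ trivial. The remaining claim in (H4) reduces to showing $\mu_{J^1}\notin N$, whereupon its image in the torsion-free quotient $A/N$ generates a subgroup $\cong\Z$ (the meridians of $\pi_1(M(J^1))$ themselves lie in $\cP^n\pi_1(W_0)$ and thus act trivially on $A/N$ via $\Gamma$, so the image of $\phi_1$ is precisely this $\Z$).

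The main obstacle is this last assertion that $\mu_{J^1}\notin N$. I would tackle it using the higher-order Blanchfield form $\cB$ on $H_1(\partial W_0;\Q[\Gamma])$, which splits as a direct sum over boundary components, so that cross-pairings between classes supported on different components vanish. A hypothetical relation $\mu_{J^1}\equiv \sum_i q_i g_i \mu_{J^0}\pmod{\cP^{n+1}}$ would place $\mu_{J^1}-\sum_i q_i g_i\mu_{J^0}$ in the Blanchfield-bordism kernel furnished by Lemma~\ref{lemma:Blanchfield-bordism} applied to $W_0$, and the resulting self-annihilation condition on $\cB$ reduces (after the cross-terms drop out) to an identity in $\cK/\Q[\Gamma]$ relating $\cB(\mu_{J^1},\mu_{J^1})$ and $\cB(\mu_{J^0},\mu_{J^0})$, both nonzero by the Blanchfield hypothesis. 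Extracting a contradiction from this identity by carefully tracking the $\Q[\Gamma]$-structure is the technical heart of the argument, paralleling but refining the closing Blanchfield computation in the proof of Proposition~\ref{proposition:computation-of-rho-invariant-of-J_0^i}.
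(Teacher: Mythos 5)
Your framework differs from the paper's in where the quotienting happens: you build the standard tower $\cP^k\pi_1(W_0)$ (all over $\Q$) to get $G_0$, then kill the $\Q[\Gamma]$-submodule $N$ generated by $\mu_{J^0}$ inside $\cP^n\pi_1(W_0)/\cP^{n+1}\pi_1(W_0)$ at the end. The paper instead modifies the definition of $\cP^2$ so that the submodule $A\subset H_1(M(K);\Q[t^{\pm1}])$ generated by $\eta_{n-1}^0$ is killed right at the metabelian level, and then propagates the nontriviality statement upward level by level. The bookkeeping to show $G$ is PTFA, $G^{(n+1)}=\{e\}$, $\mu_K$ has infinite order, and $\phi_0$ is trivial is correct in your version. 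But your ``technical heart'' -- showing $\mu_{J^1}\notin N$ -- is where the argument breaks down, and not merely because the details are unwritten.

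First, the isotropy relation you extract is not a contradiction. If $\mu_{J^1}-z$ lies in the Blanchfield kernel with $z=\sum q_i g_i\mu_{J^0}$ supported on $-M(J^0)$, the vanishing of cross-terms gives $\cB_1(\mu_{J^1},\mu_{J^1})=\cB_0(z,z)$ in $\cK/\Q[\Gamma]$. Having both $\cB_1(\mu_{J^1},\mu_{J^1})\ne 0$ and $\cB_0(\mu_{J^0},\mu_{J^0})\ne 0$ is perfectly consistent with this equality, so nothing is contradicted. (Compare the classical level: from $\eta^1-f_1(t)\eta^0\in M$ alone one gets only $g(t)\mid 1-f_1(t)f_1(t^{-1})$, which is satisfiable, e.g.\ by $f_1=1$.) The paper's argument essentially needs all three meridians: it takes $\eta^1-f_1\eta^0$, $\eta^2-f_2\eta^0$, and crucially the $\eta^0$-free combination $f_2\eta^1-f_1\eta^2\in M$, then sums the three divisibility conditions to get $g(t)\mid 2$, contradicting $\deg g>0$. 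Your sketch has no analogue of this third relation.

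Second, the statement you reduce to -- ``$\mu_{J^1}\notin N$'' for a fixed $J^1$ -- may simply be false. The structure of the proof only allows one to conclude that \emph{not both} $\mu_{J^1}$ and $\mu_{J^2}$ can be killed; which one survives depends on the metabolizer and cannot be pinned down a priori. The paper is careful to phrase the conclusion as an ``either/or.'' Finally, even if one repairs the argument by bringing in $\mu_{J^2}$, running the computation at level $n$ over $\Q[\Gamma]$ (noncommutative, not a PID, no degree or unique factorization) is substantially harder than the paper's computation, which is carried out at the metabelian level over the PID $\Q[t^{\pm1}]$ and only then lifted through the tower by the same induction as in Proposition~\ref{proposition:computation-of-rho-invariant-of-J_0^i}. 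This is not a cosmetic difference: the divisibility argument $g(t)\mid 2$ has no obvious analogue over $\Q[\Gamma]$. You should adopt the paper's strategy of doing the Blanchfield computation over $\Q[t^{\pm1}]$ first, encoding the result into the definition of $\cP^2$ (or equivalently proving the level-$1$ statement before propagating).
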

We give a proof of Proposition~\ref{proposition:computation-of-rho(J)-non-homomorphism} in Section~\ref{subsection:proof-of-non-homomorphism}.

\subsection{Proof of Proposition~\ref{proposition:computation-of-rho(J)-non-homomorphism}}\label{subsection:proof-of-non-homomorphism}
In this subsection, we prove Proposition~\ref{proposition:computation-of-rho(J)-non-homomorphism}. 

\begin{proof}[Proof of Proposition~\ref{proposition:computation-of-rho(J)-non-homomorphism}]
Suppose that $X$ is a space containing $V$ as a subspace such that the inclusion-induced map $H_1(V)\to H_1(X)$ is an isomorphism.
The main example is $X=W_k$.
For $\Gamma=\pi_1(X)$, we define its subgroups $\cP^k\Gamma$ for $k=0, 1,\dots, n+1$ as follows. 
Let $\cP^0\Gamma=\Gamma$ and let
\[
\cP^1\Gamma = \Gamma^{(1)}=[\Gamma,\Gamma] = \Ker\left\{\Gamma\rightarrow H_1(\Gamma)=\Z=\langle t\rangle\right\}.
\]

Recall that we use the cobordism $C$ bounded by $\partial_+C \sqcup - \partial_-C$ in the construction of~$W_0$.
Let $i_+\colon \partial_+ C\to C$ and $i_-\colon \partial_- C\to C$ be inclusions. Then, the composition of the inclusion-induced homomorphisms 
\[
(i_-)_*^{-1}\circ (i_+)_*\colon H_1(\partial_+C;\Q[t^{\pm 1}])\to H_1(\partial_-C;\Q[t^{\pm 1}])
\] 
is an isomorphism. Via this isomorphism, we obtain an identification
\begin{equation}\label{equation:identification}
  \begin{aligned}
    H_1(M(K);\Q[t^{\pm 1}])
    &= H_1(M(J_n^1);\Q[t^{\pm 1}])\oplus H_1(M(J_n^2);\Q[t^{\pm 1}])
    \\
    & \qquad \oplus H_1(-M(J_n^0);\Q[t^{\pm 1}]),
  \end{aligned}
\end{equation}
which we use throughout this subsection.
Note that the axis $\eta_k^i$ is identified with a meridian $\mu_k^i$ of the knot $J_k^i$ in $M(J_{k+1}^i)$. 
Let $A$ be the $\Q[t^{\pm1}]$-submodule of $H_1(-M(J_n^0);\Q[t^{\pm 1}])$ generated by $\eta_{n-1}^0$. Let 
\[
i_*^X\colon A\to H_1(M(K);\Q[t^{\pm1}]) \to H_1(V;\Q[t^{\pm 1}])\to H_1(X;\Q[t^{\pm 1}])
\] 
be the homomorphism induced by the inclusions. 

Define $\cP^2\Gamma$ to be the kernel of the following composition:
\[
\cP^1\Gamma\to \frac{\cP^1\Gamma}{[\cP^1\Gamma,\cP^1\Gamma]}\xrightarrow{\cong} H_1(X;\Z[t^{\pm 1}])\to
H_1(X;\Q[t^{\pm 1}]) \xrightarrow{q} H_1(X;\Q[t^{\pm 1}])/i_*^X(A).
\]
For $k\ge 2$, define 
\[
\cP^{k+1}\Gamma = \Ker\left\{\cP^k\Gamma \to \frac{\cP^k\Gamma}{[\cP^k\Gamma,\cP^k\Gamma]}\to \frac{\cP^k\Gamma}{[\cP^k\Gamma,\cP^k\Gamma]}\otimes_\Z \Q = H_1\bigl(\Gamma;\Q[\Gamma/\cP^k\Gamma]\bigr)\right\}.
\]
Finally, let $G=\pi_1(W_0)/\cP^{n+1}\pi_1(W_0)$ and define $\phi\colon \pi_1(W_0)\to G$ to be the quotient homomorphism.
We will show that $\phi$ satisfies (H3) and~(H4).
For the reader's convenience we recall (H3) and (H4) below.

\begin{itemize}
	\item[(H3)] The codomain $G$ lies in Strebel's class $D(\Q)$, $G^{(n+1)}=\{e\}$, and $\phi$ sends a meridian of $K$ to an element of infinite order in $G$.
	\item[(H4)] The homomorphism $\phi_0$ is trivial.
  Each of $\phi_1$ and $\phi_2$ is trivial or has image isomorphic to~$\Z$.
  Either $\phi_1$ or $\phi_2$ has image isomorphic to~$\Z$.
\end{itemize}

\begin{proof}[Proof of (H3)]
Since $\pi_1(W_0)^{(n+1)}\subset \cP^{n+1}\pi_1(W_0)$, it follows that $G^{(n+1)}=\{e\}$.
For $0\le k\le n$, the quotient $\cP^k\pi_1(W_0)/\cP^{k+1}\pi_1(W_0)$ is torsion-free abelian since it injects into a torsion-free abelian group.
It follows from Lemma~\ref{lemma:amenable-D(R)-group}~(1) that the group $G$ lies in Strebel's class~$D(\Q)$. A meridian of $K$ generates
  \[
    \pi_1(W_0)/\cP^1  \pi_1(W_0) \cong H_1(W_0) \cong H_1(M(K)) = \Z    
 \]
onto which $G$ surjects. Therefore, $\phi$ sends a meridian of $K$ to an infinite order element in~$G$.  This proves $(H3)$.
\let\qed=\relax 
\end{proof}

\begin{proof}[Proof of (H4)]
Recall that $\mu_k^i$ is a meridian of $J_k^i$. Since $\mu_0^i$, $0\le i\le 2$, generates $H_1(M(J^i))\cong \Z$ and the group $\cP^n\pi_1(W_0)/\cP^{n+1}\pi_1(W_0)$ is torsion-free abelian, $\phi$ satisfies (H4) if it satisfies (H4$'$) below.
\begin{itemize}
	\item[(H4$'$)] The homomorphism $\phi\colon \pi_1(W_0)\to G$ sends $\mu_0^i$ to $\cP^n\pi_1(W_0)/\cP^{n+1}\pi_1(W_0)$ for each~$i$, sends $\mu_0^0$ to the identity, and sends either $\mu_0^1$ or $\mu_0^2$ to a nontrivial element.
  \end{itemize}

Using arguments in the proof of Proposition~\ref{proposition:computation-of-rho-invariant-of-J_0^i}, one can show that $\mu_k^i\in \cP^{n-k}\pi_1(W_0)$ for each $i$ and $k$. In particular, $\mu_0^i\in \cP^n\pi_1(W_0)$ for each~$i$.
From this the first part of (H4$'$) follows.

We will show that $\mu_0^0$ is trivial in $\cP^n\pi_1(W_0)/\cP^{n+1}\pi_1(W_0)$ using an induction argument. By our choice of the submodule $A$, it follows that $\eta_{n-1}^0$ is trivial in $H_1(W_n;\Q[t^{\pm 1}])/i_*^{W_n}(A)$ and hence lies in $\cP^2\pi_1(W_n)$. From arguments in \cite[Assertion~1 on p.~4799]{Cha:2010-01}, it follows that for $0\le k\le n-1$,
\[
\pi_1(W_k)/\pi_1(W_k)^{(2)}\cong  \pi_1(W_n)/\pi_1(W_n)^{(2)}.
\]
In particular $H_1(W_k;\Q[t^{\pm 1}])\cong H_1(W_n;\Q[t^{\pm 1}])$ and
\begin{equation}\label{equation:isomorphism-metabelian}
H_1(W_k;\Q[t^{\pm 1}])/i_*^{W_k}(A)\cong H_1(W_n;\Q[t^{\pm 1}])/i_*^{W_n}(A).
\end{equation}
Therefore,
\begin{equation}\label{equation:isomorphism-cP1-cP2}
\cP^1\pi_1(W_k)/\cP^2\pi_1(W_k) \cong \cP^1\pi_1(W_n)/\cP^2\pi_1(W_n).
\end{equation}
Therefore, $\cP^2\pi_1(W_{n-1})\cong \cP^2\pi_1(W_n)$, and it follows that $\eta_{n-1}^0$ lies in $\cP^2\pi_1(W_{n-1})$. Since $\eta_{n-1}^0$ is identified with $\mu_{n-1}^0$, we have $\mu_{n-1}^0\in \cP^2\pi_1(W_{n-1})$. 

Now suppose that $\mu_{k+1}^0\in \cP^{n-k}\pi_1(W_{k+1})$ where $k\le n-2$. We will show that $\mu_k^0\in \cP^{n-k+1}\pi_1(W_k)$. Note that $\mu_k^0$ is identified with $\eta_k^0$ in  in $M(J_{k+1}^0)\subset W_k$.
Since the satellite operator $P$ has axis $\eta_k^0$ and has winding number zero, it follows that $\mu_k^0\in \langle \mu_{k+1}^0\rangle^{(1)}$ in $\cP^{n-k}\pi_1(W_k)$. 
Therefore,
\[
\mu_k^0\in \bigl(\cP^{n-k}\pi_1(W_k)\bigr)^{(1)} \subset \cP^{n-k+1}\pi_1(W_k).
\]
Inductively, we can conclude that $\mu_0^0\in \cP^{n+1}\pi_1(W_0)$, and it follows that $\mu_0^0$ is trivial in $\cP^n\pi_1(W_0)/\cP^{n+1}\pi_1(W_0)$.

We show that either $\mu_0^1$ or $\mu_0^2$ is nontrivial in $\cP^n\pi_1(W_0)/\cP^{n+1}\pi_1(W_0)$ using an induction argument as in the proof of Proposition~\ref{proposition:computation-of-rho-invariant-of-J_0^i}.

First, we show that either $\mu_{n-1}^1$ or $\mu_{n-1}^2$ is nontrivial in $\cP^1\pi_1(W_{n-1})/\cP^{2}\pi_1(W_{n-1})$. 
Recall that $\mu_{n-1}^i$ is identified with $\eta_{n-1}^i$ in $M(J_n^i)$, hence in $W_n$. Therefore, by (\ref{equation:isomorphism-cP1-cP2}) we only need to show that either $\eta_{n-1}^1$ or $\eta_{n-1}^2$ is nontrivial in $\cP^1\pi_1(W_n)/\cP^{2}\pi_1(W_n)$. Since $\cP^1\pi_1(W_n)/\cP^2\pi_1(W_n)$ injects into $H_1(W_n;\Q[t^{\pm 1}])/i_*^{W_n}(A)$, it suffices to show that either $\eta_{n-1}^1$ or $\eta_{n-1}^2$ is nontrivial in $H_1(W_n;\Q[t^{\pm 1}])/i_*^{W_n}(A)$.

Using an argument similar to the one for \eqref{equation:isomorphism-metabelian}, one can see that 
\[
H_1(W_n;\Q[t^{\pm 1}])/i_*^{W_n}(A)\cong H_1(V;\Q[t^{\pm 1}])/i_*^V(A).
\]
Therefore, we only need to show that either $\eta_{n-1}^1$ or $\eta_{n-1}^2$ is nontrivial in $H_1(V;\Q[t^{\pm 1}])/i_*^V(A)$.

Let 
\[
M=\Ker\{i_*\colon H_1(M(K);\Q[t^{\pm 1}])\to H_1(V;\Q[t^{\pm 1}])\}.
\]
Then the map 
\[
q\circ i_*\colon H_1(M(K);\Q[t^{\pm 1}])\to H_1(V;\Q[t^{\pm 1}])\to H_1(V;\Q[t^{\pm1 }])/i_*^V(A)
\] 
induces the homomorphism
\[
\tilde{i_*}\colon H_1(M(K);\Q[t^{\pm 1}])/(M+A)\to H_1(V;\Q[t^{\pm 1}])/i_*^V(A).
\]
Since $M$ is the kernel of the map $i_*$, one can readily see that $\tilde{i_*}$ is injective. 
Therefore, if $\eta_{n-1}^i$ is nontrivial in $H_1(M(K);\Q[t^{\pm 1}])/(M+A)$, then it will be nontrivial in $H_1(V;\Q[t^{\pm 1}])/i_*^V(A)$.

For brevity, let $\eta^i=\eta_{n-1}^i$, $0\le i\le 2$. Suppose on the contrary that both $\eta^1$ and $\eta^2$ are trivial in $H_1(M(K);\Q[t^{\pm 1}])/(M+A)$. 
Let $\Bl_K$ denote the Blanchfield form on $H_1(M(K);\Q[t^{\pm 1}])$ and let $\Bl^i$ denote the Blanchfield form on $H_1(M(J_n^i);\Q[t^{\pm 1}])$ for each $i$. Then, via the identification \eqref{equation:identification}, we have
\[
\Bl\nolimits_K = B\ell^1 \oplus B\ell^2\oplus (-B\ell^0).
\]
Since the satellite operator $P$ has winding number zero, it follows that 
\[
H_1(M(J_n^i);\Q[t^{\pm 1}])\cong H_1(M(P(U));\Q[t^{\pm 1}])
\] 
for all $i$ and all $\Bl^i$ are isomorphic to the Blanchfield form $\Bl$ on $H_1(M(P(U));\Q[t^{\pm 1}])$. Let $g(t)$ and $h(t)$ be relatively prime polynomials in $\Q[t^{\pm1}]$ such that $\Bl(\eta,\eta)=h(t)/g(t)$. Since $\Bl(\eta,\eta)$ is nontrivial by our assumption, we have $\deg (g(t))>0$. Note that $\Bl^i(\eta^i,\eta^i)=h(t)/g(t)$ for all $i$.

Since $\eta^1$ and $\eta^2$ are trivial in $H_1(M(K);\Q[t^{\pm 1}])/(M+A)$ and the submodule $A$ is generated by $\eta^0$, there exists $f_i(t)\in \Q[t^{\pm1}]$ for each $i=1,2$ such that $\eta^i - f_i(t)\eta^0\in M$.

Since $V$ is the exterior of a slice disk for $K$ and
\[
M=\Ker\{i_*\colon H_1(M(K);\Q[t^{\pm 1}])\to H_1(V;\Q[t^{\pm 1}])\},
\]
it follows that $M$ is a metabolizer for the Blanchfield form $\Bl_K$ on $H_1(M(K);\Q[t^{\pm 1}])$. In particular, $\Bl_K(x,y)=0$ for all $x,y\in M$.

Then, it follows that in $\Q(t)/\Q[t^{\pm1}]$, for each $i=1,2$, 
\begin{align*}
0	&=\Bl\nolimits_K\left(\eta^i - f_i(t)\eta^0,\eta^i - f_i(t)\eta^0\right) \\
	&= B\ell^i\left(\eta^i,\eta^i\right) - f_i(t)f_i(t^{-1})B\ell^0\left(\eta^0,\eta^0\right) \\
	&= \left(1-f_i(t)f_i(t^{-1})\right)h(t)/g(t)
\end{align*}
It follows that $g(t)$ divides $1-f_i(t)f_i(t^{-1})$ in $\Q[t^{\pm1}]$ for $i=1,2$. Also, since $M$ is a $\Q[t^{\pm 1}]$-submodule of $H_1(M(K);\Q[t^{\pm 1}])$, we have
\[
f_2(t)\eta^1 - f_1(t)\eta^2 = f_2(t)\left(\eta^1 - f_1(t)\eta^0\right) - f_1(t)\left(\eta^2 - f_2(t)\eta^0\right) \in M.
\]
Therefore,
\begin{align*}
0	&=\Bl\nolimits_K\left(f_2(t)\eta^1 - f_1(t)\eta^2,f_2(t)\eta^1 - f_1(t)\eta^2\right) \\
	&= f_2(t)f_2(t^{-1}) B\ell^1\left(\eta^1,\eta^1\right) + f_1(t)f_1(t^{-1})B\ell^2\left(\eta^2,\eta^2\right) \\
	&= \left(f_1(t)f_1(t^{-1}) + f_2(t)f_2(t^{-1})\right)h(t)/g(t),
\end{align*}
and it follows that $g(t)$ divides $f_1(t)f_1(t^{-1})+f_2(t)f_2(t^{-1})$ in $\Q[t^{\pm1}]$. 

Now $g(t)$ divides $1-f_1(t)f_1(t^{-1})$, $1-f_2(t)f_2(t^{-1})$, and $f_1(t)f_1(t^{-1})+f_2(t)f_2(t^{-1})$ in $\Q[t^{\pm1}]$. Therefore, $g(t)$ divides 2 in $\Q[t^{\pm1}]$, which contradicts that $\deg (g(t))>0$. Therefore, either $\eta^1$ or $\eta^2$ is nontrivial in $H_1(M(K);\Q[t^{\pm 1}])/(M+A)$. Consequently, this shows that either $\mu_{n-1}^1$ or $\mu_{n-1}^2$ is nontrivial in $\cP^1\pi_1(W_{n-1})/\cP^{2}\pi_1(W_{n-1})$.

Exchanging $\mu_{n-1}^1$ with $\mu_{n-1}^2$ if necessary, we may assume that $\mu_{n-1}^1$ is nontrivial in the quotient $\cP^1\pi_1(W_{n-1})/\cP^{2}\pi_1(W_{n-1})$. 
Now we use an induction for $k$ in the reverse order as in the proof of Proposition~\ref{proposition:computation-of-rho-invariant-of-J_0^i}. 
Suppose that $\mu_{k+1}^1$ is nontrivial in $\cP^{n-k-1}\pi_1(W_{k+1})/\cP^{n-k}\pi_1(W_{k+1})$ where $k\le n-2$. 
Then, using an argument in the proof of Proposition~\ref{proposition:computation-of-rho-invariant-of-J_0^i}, one can show that $\mu_k^1$ is nontrivial in $\cP^{n-k}\pi_1(W_k)/\cP^{n-k+1}\pi_1(W_k)$. In doing so, we note that for $k\le n-1$, the pair $(W_{k+1},\pi\to \Gamma)$ with $\pi=\pi_1(W_{k+1})$ and $\Gamma=\pi/\cP^{n-k}\pi$ is a $\Q$-coefficient Blanchfield bordism by Lemmas~\ref{lemma:lagrangian-bordism-extension}, \ref{lemma:C-and-E-injectivity} and \ref{lemma:integral-solution-lagrangian-bordism} and Proposition~\ref{proposition:lagrangian-Blanchfield-bordism}.

Now by an induction, $\mu_0^1$ is nontrivial in $\cP^n\pi_1(W_0)/\cP^{n+1}\pi_1(W_0)$, and $\phi$ satisfies (H4$'$). 
This completes the proof of Proposition~\ref{proposition:computation-of-rho(J)-non-homomorphism}.
\end{proof}
\let\qed=\relax
\end{proof}

\begin{remark}\label{remark:non-homomorphism-generalization}
Theorem~\ref{theorem:non-homomorphism} generalizes in various ways.

\begin{enumerate} 
	\item The satellite operators in Theorem~\ref{theorem:non-homomorphism} are not homomorphisms even modulo $\cF_{n+1}$: in the proof of Theorem~\ref{theorem:non-homomorphism}, let $V$ be an integral $(n+1)$-solution for $M(K)$. The group $G$ in Proposition~\ref{proposition:computation-of-rho(J)-non-homomorphism} defined in Section~\ref{subsection:proof-of-non-homomorphism} satisfies $G^{(n+1)}=\{e\}$, and therefore we still have $S_G(V)=0$ by the amenable signature theorem (Theorem~\ref{theorem:vanishing-rho-invariants}).
	\item Theorem~\ref{theorem:non-homomorphism} generalizes to the case of compositions of distinct satellite operators. See Theorem~\ref{theorem:non-homomorphism-generalization} below, whose proof is essentially identical to that of Theorem~\ref{theorem:non-homomorphism}\@. Theorem~\ref{theorem:non-homomorphism-generalization} generalizes \cite[Corollary~4.5]{Miller:2019-01} by removing the condition in \cite[Corollary~4.5]{Miller:2019-01} on the existence of a knot $K$ such that $\rho_0(K)\ne 0$ and $P(K)$ is slice; we note that to the authors' knowledge it is unknown whether or not there exists such a knot $K$ for each winding number zero satellite operator. 
\end{enumerate}
\end{remark}

\begin{theorem}[{Generalization of Theorem~\ref{theorem:non-homomorphism}}]\label{theorem:non-homomorphism-generalization}
For $1\le k\le n$, let $P_k\colon \cC\to \cC$ be a winding number zero satellite operator with axis $\eta_k$. For each $k$, let $\Bl_k$ be the Blanchfield form of~$P_k(U)$. If $\Bl_k(\eta_k, \eta_k)$ is nontrivial for $1\le k\le n$, then $P_n\circ P_{n-1}
\circ \cdots \circ P_1\colon \cC \to \cC$ is not a homomorphism, even modulo $\cF_{n+1}$.
\end{theorem}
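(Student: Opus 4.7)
The plan is to adapt the proof of Theorem~\ref{theorem:non-homomorphism} essentially verbatim, with the same structural modifications that took Theorem~\ref{theorem:main} to its generalization Theorem~\ref{theorem:main-generalization}. Setting $Q = P_n \circ \cdots \circ P_1$ and assuming for contradiction that $Q$ is a homomorphism modulo $\cF_{n+1}$, I would first observe that the knot $K = Q(J) \# Q(J) \# -Q(J\#J)$ lies in $\cF_{n+1}$ for every $J$, so that $M(K)$ bounds an integral $(n+1)$-solution $V$. I would then choose a universal constant $L$ bounding $|\rhot(M(P_k(U)), \phi)|$ across all $\phi$ and all $1 \le k \le n$ (by Theorem~\ref{theorem:universal-bound}) and pick $J$ with $\rhot(M(J), \epsilon) > 3nL$.

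Next I would build the cobordism $W_0$ layer by layer, replacing each $E(P, J_k^i)$ in the original construction with $E(P_{k+1}, J_k^i)$, and defining the iterated knots by $J_k^i = (P_k \circ \cdots \circ P_1)(J^i)$ where $J^1 = J^2 = J$ and $J^0 = J \# J$. The $\rhot$-side estimate is unchanged from Section~\ref{subsection:proof-of-theorem-nonhomomorphism}: the total $\rhot$-contribution of the middle boundary components $M(P_{k+1}(U))$ is bounded by $3nL$ in absolute value, while the outer $J$-pieces contribute more than $3nL$ for appropriate $\phi$ satisfying (H3) and (H4). The $S_G(W_0)$-side vanishes by Novikov additivity, by the vanishing of $S_G$ on the standard cobordisms $C$ and $E(P_{k+1}, J_k^i)$ (see properties (2) and (3) in Section~\ref{subsection:proof-of-Theorem-A}), and by the amenable signature theorem applied to the integral $(n+1)$-solution $V$; this yields the desired contradiction.

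The heart of the argument will be adapting Proposition~\ref{proposition:computation-of-rho(J)-non-homomorphism} to produce $\phi\colon \pi_1(W_0) \to G$ satisfying (H3) and (H4). I would define the mixed-coefficient filtration $\cP^k\Gamma$ exactly as in Section~\ref{subsection:proof-of-non-homomorphism}, using the submodule $A$ generated by the axis $\eta_{n-1}^0$ of $P_n$ appearing in $M_{n-1}^0 \subset \partial E_{n-1}^0$. Property (H3) and the triviality of $\mu_0^0$ in $\cP^n/\cP^{n+1}$ follow by the same formal arguments. For the nontriviality of $\mu_0^1$ or $\mu_0^2$, the top-level input is that the Blanchfield form of $J_n^i$ is isomorphic to $\Bl_n$ (because $P_n$ has winding number zero) and the meridian $\mu_{n-1}^i$ corresponds to $\eta_n$ under this identification; the polynomial bookkeeping argument from Section~\ref{subsection:proof-of-non-homomorphism} then carries over using $\Bl_n(\eta_n,\eta_n) \ne 0$. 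The inductive descent from $\mu_{k+1}^1$ to $\mu_k^1$ uses the higher-order Blanchfield form on $M(J_{k+1}^1)$, whose classical reduction is $\Bl_{k+1}$ with axis $\eta_{k+1}$, and the hypothesis $\Bl_{k+1}(\eta_{k+1},\eta_{k+1}) \ne 0$ is exactly what is needed at each stage.

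The main technical obstacle will be verifying the Blanchfield bordism property for each pair $(W_{k+1}, \psi)$ in this more general setup; without it, nontriviality of $\cB(\mu_k^1, \mu_k^1)$ would not imply nontriviality of $\mu_k^1$ in $H_1(W_{k+1};\Q[\Gamma])$. I expect this to follow from the machinery of Section~\ref{subsection:Blanchfield-bordism} with essentially no new input: Lemma~\ref{lemma:integral-solution-lagrangian-bordism} starts the induction from $V$, Lemmas~\ref{lemma:lagrangian-bordism-extension} and~\ref{lemma:C-and-E-injectivity} propagate the Lagrangian bordism property through the added pieces $C$ and $E(P_{\ell+1}, J_\ell^i)$, and Proposition~\ref{proposition:lagrangian-Blanchfield-bordism} then converts it to the Blanchfield bordism property since each boundary component $M$ satisfies $\rank_\Q H_1(M;\Q) = 1$. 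Since none of these lemmas depend on the pattern being fixed across levels, the generalization should go through without genuine new difficulty.
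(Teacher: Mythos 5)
Your proposal is correct and follows essentially the same route as the paper, which gives exactly the three modifications you list (the constant $L$ bounding $|\rhot(M(P_k(U)),\phi)|$ over all $k$, the recursively defined $J_k^i=P_k(J_{k-1}^i)$, and $E_k^i = E(P_{k+1},J_k^i)$ with $E_k^0=-E(P_{k+1},J_k^0)$) and leaves the rest of the verification implicit. Your expansion of that terse list -- in particular, tracking that the top-level polynomial bookkeeping uses $\Bl_n(\eta_n,\eta_n)\ne 0$, that each inductive descent step uses $\Bl_{k+1}(\eta_{k+1},\eta_{k+1})\ne 0$ via the identification of $M(J_{k+1}^1)$'s classical Blanchfield form with $\Bl_{k+1}$, and that the Lagrangian/Blanchfield bordism machinery of Section~\ref{subsection:Blanchfield-bordism} is pattern-agnostic -- is exactly what the paper's proof relies on.
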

\begin{proof}
The proof is almost the same as that of Theorem~\ref{theorem:non-homomorphism}.
So we only give a list of necessary changes.
\begin{enumerate}
	\item Let $L$ be a constant such that $|\rhot(M(P_k(U)),\phi)|< L$ for all $\phi$ and $1\le k\le n$.
	\item Let $J_k^i=P_k(J_{k-1}^i)$, $1\le k\le n$.
	\item For $0\le k\le n-1$, let $E_k^i=E(P_{k+1}, J_k^i)$ for $i=1,2 $ and let $E_k^0=-E(P_{k+1}, J_k^0)$.
  \qedhere
\end{enumerate}
\end{proof}

\section{Satellites of patterns with relatively prime Alexander polynomials}\label{section:coprime-Alexander-polynomial}

In this section we prove Theorem~\ref{theorem:coprime-satellites}:
suppose that $P\subset S^1\times D^2$ has winding number zero and $\Bl(\eta, \eta)$ is nontrivial for the axis $\eta$ of~$P$.
Suppose that $Q\subset S^1\times D^2$ has winding number zero and the Alexander polynomials of $P(U)$ and $Q(U)$ are relatively prime.
Our goal is to prove that the operators $P^n\colon \cC \to \cC$ and $Q\colon \cC \to \cC$ are distinct for all $n\ge 0$.

\subsection{Proof of Theorem~\ref{theorem:coprime-satellites}}\label{subsection:proof-theorem-coprime}

By Theorem~\ref{theorem:universal-bound}, there exist $L_1, L_2>0$ such that 
\begin{equation}
  \label{equation:bounds-L_1-L_2}
  |\rhot(M(P(U)),\phi)| < L_1 \text{ and } |\rhot(M(-P^n(U), \psi)| < L_2
\end{equation}
for all group homomorphisms $\phi\colon \pi_1(M(P(U)))\to G$ and $\psi\colon \pi_1(M(-P^n(U)))\to G$. 

Let $J$ be a knot such that $\rhot(M(J),\epsilon) > nL_1 + L_2$ where $\epsilon\colon \pi_1(M(J))\to \Z$ is the abelianization. For instance, as~$J$, one can use a connected sum of sufficiently many copies of the left-handed trefoil.

If $P^n(U)\ne Q(U)$ in $\cC$, then we are done.
Suppose that $P^n(U)=Q(U)$ in~$\cC$. We will show that $P^n(J)\ne Q(J)$ in~$\cC$, to conclude that $P^n \ne Q$ on~$\cC$.

Suppose on the contrary that $P^n(J)=Q(J)$ in $\cC$.  Let 
\[
K=P^n(J)\#(-P^n(U))\#  (-Q(J)) \# Q(U).
\]
Then $K$ is slice since $P^n(U)$ and $Q(U)$ are concordant.

We construct a 4-manifold $W_0$ as follows. 

\begin{enumerate}
	\item Let $V$ be the exterior of a slice disk for $K$ in $D^4$. Note that $\partial V=M(K)$. 
	\item Let $C = C\bigl(P^n(J),-P^n(U),-Q(J)\# Q(U)\bigr)$ be the standard cobordism. See (2) in Section~\ref{subsection:proof-of-Theorem-A}.
  The top boundary of $C$ is
	\[
	\partial_+C = M(P^n(J))\sqcup M(-P^n(U)) \sqcup M((-Q(J))\# Q(U))
	\]
	and the bottom boundary is $\partial_- C=-M(K)$. 
	\item Recall that $P^{k+1}(J)=P(P^k(J))$, where $P^0(J)=J$. For $0\le k\le n-1$, let $E_k = E(P,P^k(J))$ be the cobordism with boundary 
  \[
    \partial E_k=M(P^k(J))\sqcup M(P(U)) \sqcup (-M(P^{k+1}(J))).
  \]
  that we used in (3) in Section~\ref{subsection:proof-of-Theorem-A}.
	\item $P^n(J)\#(-P^n(U))$ is integrally $n$-solvable by Lemma~\ref{lemma:affine-inclusion}. Since $K$ is slice and $\cF_n$ is a subgroup of $\cC$, it follows that $(-Q(J))\# Q(U)$ is integrally $n$-solvable. Let $Z$ be an integral $n$-solution for $(-Q(J))\# Q(U)$.
\end{enumerate}

Let
\[
W_n=V\cupover{M(K)} C \cupover{M((-Q(J))\#Q(U))} Z.
\]
Then $\partial W_n=M(P^n(J))\sqcup M(-P^n(U))$. 

For $k = n-1$, $n$,~\ldots,~$0$, define inductively
\[
W_k=W_{k+1}\cupover{M(P^{k+1}(J))} E_k.
\]
Then we have 
\begin{align*}
W_0&=V\cup C \cup Z \cup E_{n-1}\cup E_{n-2}\cup \cdots \cup E_0, \text{ and }\\
\partial W_0&=M(J)\sqcup nM(P(U))\sqcup M(-P^n(U)).
\end{align*}
See Figure~\ref{figure:coprime} for the cobordism $W_0$.

\begin{figure}[ht]
  \includestandalone{Figure-coprime}
  \caption{The cobordism $W_0$}
  \label{figure:coprime}
\end{figure}

Let $\phi\colon \pi_1(W_0)\to G$ be a homomorphism. By abuse of notation we write $\phi$ for restrictions of $\phi$ to subspaces of $W_0$ as well. 

Recall that for a codimension zero submanifold $X$ of $W_0$, we denote the $L^2$-signature defect by $S_G(X) = \sign_G^{(2)}(X)-\sign(X)$.
We have
\begin{equation}
  \label{equation:section6-rho-invariant-of-W_0}
  \rhot(\partial W_0, \phi) = S_G(W_0).
\end{equation}
For a specific choice of $\phi$ that we will make later, we will show that $\rhot(\partial W_0,\phi)>0$ and $S_G(W_0)=0$.
This will finish the proof.

First, we compute $S_G(W_0)$. By Novikov additivity, 
\[
S_G(W_0) = S_G(V) + S_G(C) + \sum_{k=0}^{n-1}S_G(E_k) + S_G(Z). 
\]

Using the same arguments given in the proof of Theorem~\ref{theorem:main}, one can show that $S_G(C)=0$ and $S_G(E_k)=0$ for all $k$. Suppose that $\phi\colon \pi_1(W_0)\to G$ satisfies (H5) below.

\begin{itemize}
	\item[(H5)] The codomain $G$ lies in Strebel's class $D(\Q)$ and $G^{(n+1)}=\{e\}$. The image of $\pi_1(Z)^{(n)}\to \pi_1(W_0)$ lies in the kernel of $\phi$, and $\phi$ sends a meridian of $(-Q(J))\# Q(U)$ in $\partial Z=M((-Q(J))\# Q(U))$ to an element of infinite order in~$G$.
\end{itemize}
Then, since the slice disk exterior $V$ is an integral $(n.5)$-solution and $G^{(n+1)}=\{e\}$, it follows that $S_G(V)$=0 by the amenable signature theorem (Theorem~\ref{theorem:vanishing-rho-invariants}). 
Also,  $\phi\colon \pi_1(Z)\to G$ factors through the quotient map $\pi_1(Z)\to \pi_1(Z)/\pi_1(Z)^{(n)}$. Let $\Gamma$ be the image of $\pi_1(Z)/\pi_1(Z)^{(n)}$ in $G$. Let $\psi\colon \pi_1(Z)\to \Gamma$ be the map induced from $\phi$. By Lemma~\ref{lemma:properties-of-rho-invariants}~(1), $\rhot(\partial Z,\phi)=\rhot(\partial Z, \psi)$. Note that $\Gamma^{(n)}=\{e\}$. Since an integral $n$-solution is an integral $(n-0.5)$-solution,  $Z$ is an integral $(n-0.5)$-solution.
By Theorem~\ref{theorem:vanishing-rho-invariants} (with $n$ in place of $n+1$), $\rhot(\partial Z, \psi)=0$, and hence $\rhot(\partial Z, \phi)=0$. Since $\phi$ extends over $Z$, it follows that $S_G(Z)=0$. Therefore, $S_G(W_0)=0$. 

We compute $\rhot(\partial W_0, \phi)$.
Recall that $\partial W_0$ contains $n$ copies of~$M(P(U))$.
For clarity, denote the restrictions of $\phi$ to the $i$th copy of $M(P(U))$ in $\partial W_0$ by $\phi_i$, $1\le i\le n$. By additivity of $\rhot$-invariants, it follows that 
\begin{equation}
  \label{equation:rhot-partial_W_0}
  \rhot(\partial W_0, \phi) = \rhot(M(J),\phi) + \sum_{i=1}^n\rhot(M(P(U)),\phi_i) +\rhot(M(-P^n(U)),\phi).
\end{equation}
Suppose that $\phi$ satisfies (H6) below.
\begin{itemize}
	\item[(H6)] The image of $\phi\colon \pi_1(M(J))\to G$ is isomorphic to $\Z$.
\end{itemize}
Then, by Lemma~\ref{lemma:properties-of-rho-invariants}~(1), $\rhot(M(P(J)),\phi) = \rhot(M(J),\epsilon)$ where $\epsilon\colon \pi_1(M(J))\to~\Z$ is the abelianization. Therefore, by~\eqref{equation:rhot-partial_W_0} and~\eqref{equation:bounds-L_1-L_2} and by the choice of $J$, we have
\[
  \rhot(\partial W_0, \phi) > \rhot(M(J),\epsilon) - nL_1 - L_2 > 0.
\]

The above results $S_G(W_0)=0$ and $\rhot(\partial W_0,\phi)>0$ contradict~\eqref{equation:section6-rho-invariant-of-W_0}.
Therefore, the proposition below completes the proof of Theorem~\ref{theorem:coprime-satellites}\@.

\begin{proposition}\label{proposition:phi-coprime}
There exists a homomorphism $\phi\colon \pi_1(W_0)\to G$ satisfying (H5) and~(H6).
\end{proposition}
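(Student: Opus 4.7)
The plan is to construct $G=\pi_1(W_0)/\cP^{n+1}\pi_1(W_0)$ via a mixed-coefficient derived series $\{\cP^k\}$ tailored to the integral $n$-solution $Z$, in the spirit of Propositions~\ref{proposition:computation-of-rho-invariant-of-J_0^i} and~\ref{proposition:computation-of-rho(J)-non-homomorphism}. What is new compared to those proofs is the extra piece $Z$: condition~(H5) requires $\phi(\pi_1(Z)^{(n)})=0$, which is not automatic for the naive rational derived series. The coprime Alexander polynomial hypothesis is precisely what lets us enforce this while keeping $\mu_J$ alive at level~$n$ for~(H6).

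Set $\Gamma=\pi_1(W_0)$, $\cP^0\Gamma=\Gamma$, and $\cP^1\Gamma=[\Gamma,\Gamma]$. Since $\Delta_{P(U)}$ and $\Delta_{Q(U)}$ are coprime in $\Q[t^{\pm1}]$, the Alexander module $H_1(W_0;\Q[t^{\pm1}])$ splits as a direct sum of its $\Delta_{P(U)}$-primary summand $A_P$ and its $\Delta_{Q(U)}$-primary summand $A_Q$. Mimicking the twist introduced in Section~\ref{subsection:proof-of-non-homomorphism}, I would define
\[
\cP^2\Gamma=\Ker\Bigl\{\cP^1\Gamma\longrightarrow H_1(W_0;\Q[t^{\pm1}])\longrightarrow H_1(W_0;\Q[t^{\pm1}])/A_Q\Bigr\},
\]
and for $k\ge 2$ define $\cP^{k+1}\Gamma$ by the standard torsion-free rational derived rule $\Ker\{\cP^k\Gamma\to H_1(\Gamma;\Q[\Gamma/\cP^k\Gamma])\}$. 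Let $\phi\colon\Gamma\to G=\Gamma/\cP^{n+1}\Gamma$ be the projection.

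The verification of~(H5) splits into four subclaims. The conditions $G\in D(\Q)$, $G^{(n+1)}=\{e\}$, and infinite order of $\phi(\mu_K)$ follow by the arguments of Sections~\ref{subsection:proof-of-lemma} and~\ref{subsection:proof-of-non-homomorphism}, using Lemma~\ref{lemma:amenable-D(R)-group}(1) and that $\mu_K$ generates $H_1(W_0)\cong\Z$. The substantive task is $\pi_1(Z)^{(n)}\subseteq\cP^{n+1}\Gamma$, which I would prove by showing inductively that the image of $\pi_1(Z)^{(k)}$ in $\cP^k\Gamma/\cP^{k+1}\Gamma$ vanishes for $1\le k\le n$. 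The base case $k=1$ is precisely the point of the twist by $A_Q$: since $Z$ is integrally $n$-solvable (and hence integrally $1$-solvable), $H_1(Z;\Q[t^{\pm1}])$ is a quotient of $H_1(\partial Z;\Q[t^{\pm1}])$ by a Lagrangian, so its image in $H_1(W_0;\Q[t^{\pm1}])$ is annihilated by a power of $\Delta_{Q(U)}$ and therefore lies in $A_Q$, which is killed by the quotient $/A_Q$. The inductive step propagates this through the higher-order Alexander modules, again using the coprime hypothesis to confine $Z$'s contributions to the $\Delta_{Q(U)}$-primary part at each level.

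For~(H6), the argument parallels that of Proposition~\ref{proposition:computation-of-rho-invariant-of-J_0^i}: $\mu_J\in\cP^n\Gamma$ comes from winding number zero iterated $n$ times, and the nontriviality of $\mu_J$ in $\cP^n\Gamma/\cP^{n+1}\Gamma$ is detected by the higher-order Blanchfield form via the hypothesis $\Bl(\eta,\eta)\ne 0$. Each constituent of $W_0$ --- namely $V$, $C$, the $E_k$'s, and $Z$ --- contributes an $R$-coefficient Lagrangian bordism ($V$ and $Z$ by Lemma~\ref{lemma:integral-solution-lagrangian-bordism}, $C$ and $E_k$ by Lemma~\ref{lemma:C-and-E-injectivity} combined with Lemma~\ref{lemma:lagrangian-bordism-extension}), so Proposition~\ref{proposition:lagrangian-Blanchfield-bordism} upgrades the relevant pairs $(W_{k+1},\psi)$ to $\Q$-coefficient Blanchfield bordisms and the argument of Section~\ref{subsection:proof-of-lemma} goes through essentially verbatim. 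The main obstacle in this plan is the inductive step of~(H5): carrying the $\Delta_{Q(U)}$-absorption through the higher-order Alexander modules requires a careful Mayer--Vietoris and inclusion analysis of $W_{k+1}$ that interleaves the coprime polynomial hypothesis with the integral $n$-solvability of $Z$ at every level.
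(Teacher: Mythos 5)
Your plan captures the right shape of the argument --- a twisted derived series $\{\cP^k\}$ with $\cP^2$ modified to absorb $Z$, and a Blanchfield-bordism argument keeping $\mu_J$ alive --- but two of your steps diverge from what actually works, and one is overcomplicated.

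\textbf{The $\cP^2$ twist.}
You define $\cP^2\Gamma$ by quotienting out a $\Delta_{Q(U)}$-primary summand $A_Q$ of $H_1(W_0;\Q[t^{\pm1}])$, which presupposes that this module is a $\Delta_{P(U)}\Delta_{Q(U)}$-torsion module with a clean two-block primary decomposition. That is not given by the coprime hypothesis: the Alexander module of a 4-manifold such as $W_0$ can contain free summands and torsion at polynomials other than $\Delta_{P(U)}$ or $\Delta_{Q(U)}$, and nothing in the hypotheses rules this out. The paper avoids this entirely by localizing at $\Sigma=\{f\in\Q[t^{\pm1}]\mid f(1)\ne 0,\ \gcd(f,\lambda(t)\lambda(t^{-1}))=1\}$, where $\lambda=\Delta_{P(U)}$, and then taking the kernel of $\cP^1\Gamma\to H_1(X;\Q[t^{\pm1}]\Sigma^{-1})/\Im H_1(Z;\Q[t^{\pm1}]\Sigma^{-1})$. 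The localization inverts $\Delta_{Q(U)}$ (coprime to $\lambda$), so $\Delta_{Q(U)}$-torsion --- in particular the image of $H_1(Z;\Q[t^{\pm1}])$ --- automatically dies, and $\lambda$-torsion elements such as the axis lifts survive, with no need to know the ambient module's structure. Your $A_Q$ quotient is doing the same job in the clean cases, but the localization is the technically correct formulation.

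\textbf{The (H5) induction.}
You present the passage from $\pi_1(Z)^{(1)}\subset\cP^2\Gamma$ to $\pi_1(Z)^{(n)}\subset\cP^{n+1}\Gamma$ as the main obstacle, requiring a level-by-level Mayer--Vietoris analysis interleaving coprimality with integral solvability. It does not: once the base case is secured, the rest is the elementary observation that each $\cP^{k+1}\Gamma$ contains $[\cP^k\Gamma,\cP^k\Gamma]$ by construction, so $\pi_1(Z)^{(n)}=(\pi_1(Z)^{(1)})^{(n-1)}\subset(\cP^2\Gamma)^{(n-1)}\subset\cP^{n+1}\Gamma$ for free. This is the content the paper cites from \cite[p.~11, Assertion~B]{Cha:2021-1}; all the work is in the $k=1$ case.

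\textbf{The (H6) step.}
Conversely, this is where you under-treat the argument. You say it ``goes through essentially verbatim,'' but proving $\mu_{n-1}$ nontrivial in $\cP^1\pi_1(W_{n-1})/\cP^2\pi_1(W_{n-1})$ requires a genuinely new piece of analysis beyond Proposition~\ref{proposition:computation-of-rho-invariant-of-J_0^i}: one must show $\mu_{n-1}$ survives the new quotient. The paper does this by showing $\mu_{n-1}$ is $\lambda(t)$-torsion (hence not killed by $\Sigma$-localization), that $H_1(\partial Z;\Q[t^{\pm1}]\Sigma^{-1})=0$ by coprimality, and then running a Mayer--Vietoris sequence for $(\overline{W_{n-1}\sm Z},Z)$ to identify $H_1(W_{n-1};\Q[t^{\pm1}]\Sigma^{-1})/\Im H_1(Z;\Q[t^{\pm1}]\Sigma^{-1})$ with $H_1(\overline{W_{n-1}\sm Z};\Q[t^{\pm1}]\Sigma^{-1})$. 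Your primary-decomposition version needs the analogous computation showing $\mu_{n-1}\notin A_Q$, and this cannot be waved away. The higher levels $k\le n-2$ then do follow the template of Sections~\ref{subsection:proof-of-lemma} and~\ref{subsection:Blanchfield-bordism}, but they also require the new Lemma~\ref{lemma:solution-extension-Blanchfield-bordism} to incorporate $Z$ into the Lagrangian-bordism chain, which your proposal skips.
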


\subsection{Proof of Proposition~\ref{proposition:phi-coprime}}\label{subsection:proof-proposition-nonhomomorphism}

In this subsection, we give a proof of Proposition~~\ref{proposition:phi-coprime}.
For the reader's convenience, we recall (H5) and (H6) below.

\begin{itemize}
	\item[(H5)] The codomain $G$ lies in Strebel's class $D(\Q)$ and $G^{(n+1)}=\{e\}$. The image of $\pi_1(Z)^{(n)}\to \pi_1(W_0)$ lies in the kernel of $\phi$, and $\phi$ sends a meridian of $(-Q(J))\# Q(U)$ in $\partial Z=M((-Q(J))\# Q(U))$ to an element of infinite order in~$G$.
	\item[(H6)] The image of $\phi\colon \pi_1(M(J))\to G$ is isomorphic to $\Z$.
\end{itemize}

\begin{proof}[Proof of Proposition~\ref{proposition:phi-coprime}]
Let $X$ be a subspace of $W_0$ containing $W_n$ as a subspace such that the inclusion-induced map $H_1(W_n)\to H_1(X)$ is an isomorphism.
For example $X=W_k$ for any $k\le n$.

Let $\Gamma=\pi_1(X)$. For $k\ge 0$, we define $\cP^k\Gamma$ as follows. 
Let $\cP^0\Gamma=\Gamma$. Let
\[
\cP^1\Gamma = \Gamma^{(1)}=[\Gamma,\Gamma] = \Ker\left\{\Gamma\rightarrow H_1(\Gamma)=\Z=\langle t\rangle\right\}.
\] 
We define $\cP^2\Gamma$ following \cite[Section~3.1]{Cha:2021-1} as detailed below.
Let $\lambda(t)$ be the Alexander polynomial of $P(U)$. Let 
\[
\Sigma = \{f(t)\in \Q[t^{\pm 1}]\mid f(1)\ne 0,\; \gcd(f(t), \lambda(t))=1\},
\]
which is a multiplicatively closed subset of $\Q[t^{\pm 1}]$.
It follows that the localization $\Q[t^{\pm 1}]\Sigma^{-1}$ is defined. Let $\Im H_1(Z;\Q[t^{\pm 1}]\Sigma^{-1})$ be the image of the inclusion-induced map 
\[
H_1(Z;\Q[t^{\pm 1}]\Sigma^{-1})\to H_1(X;\Q[t^{\pm 1}]\Sigma^{-1}).
\]
Now we define $\cP^2\Gamma$ to be the kernel of the following map:
\begin{align*}
\cP^1\Gamma &\to \frac{\cP^1\Gamma}{[\cP^1\Gamma,\cP^1\Gamma]}\to \frac{\cP^1\Gamma}{[\cP^1\Gamma,\cP^1\Gamma]}\otimes_\Z \Q = H_1\big(\Gamma;\Q\left[\Gamma/\cP^1\Gamma\right]\big) = H_1\big(X;\Q[t^{\pm 1}]\big) \\
&\quad \to H_1\big(X;\Q[t^{\pm 1}]\Sigma^{-1}\big) \to H_1\big(X;\Q[t^{\pm 1}]\Sigma^{-1}\big)/\Im H_1\big(Z;\Q[t^{\pm 1}]\Sigma^{-1}\big).
\end{align*}
For $k\ge 2$, define
\[
\cP^{k+1}\Gamma = \Ker\left\{\cP^k\Gamma \to \frac{\cP^k\Gamma}{[\cP^k\Gamma,\cP^k\Gamma]}\to \frac{\cP^k\Gamma}{[\cP^k\Gamma,\cP^k\Gamma]}\otimes_\Z \Q = H_1\bigl(\Gamma;\Q[\Gamma/\cP^k\Gamma]\bigr)\right\}.
\]

Now let $G=\pi_1(W_0)/\cP^{n+1}\pi_1(W_0)$, and define $\phi\colon \pi_1(W_0)\to G$ to be the quotient map. We will show that the $\phi$ satisfies (H5) and~(H6).

\begin{proof}[Proof of (H5)]
We have $G^{(n+1)}=\{e\}$ since $\pi_1(W_0)^{(n+1)}\subset \cP^{n+1}\pi_1(W_0)$.
Since the quotient $\cP^k\pi_1(W_0)/\cP^{k+1}\pi_1(W_0)$ injects into a torsion-free abelian group, it is also torsion-free abelian. Therefore, it follows from Lemma~\ref{lemma:amenable-D(R)-group}~(1) that $G$ lies in Strebel's class $D(\Q)$.

By our choice of $\cP^2\Gamma$, it follows from \cite[p.~11, Assertion~B]{Cha:2021-1} that the map induced by the inclusion $Z\hookrightarrow W_0$ sends $\pi_1(Z)^{(n)}$ into $\cP^{n+1}\pi_1(W_0)$. Therefore, $\phi$ sends $\pi_1(Z)^{(n)}$ to $\{e\} \subset G$. 

The homomorphism $\phi$ sends a meridian of $\partial Z$ to an element of infinite order in $G$ since $\cP^0\pi_1(W_0)/\cP^1\pi_1(W_0) \cong H_1(W_0)\cong H_1(\partial Z)$, which is generated by a meridian of $\partial Z$. Therefore, $\phi$ satisfies~(H5).
\def\qedsymbol{}
\end{proof}

\begin{proof}[Proof of (H6)]
It suffices to show that $\phi$ sends a meridian of $J$ to a nontrivial element in $\cP^n\pi_1(W_0)/\cP^{n+1}\pi_1(W_0)$ since $\cP^n\pi_1(W_0)/\cP^{n+1}\pi_1(W_0)\subset G$ is torsion-free abelian.  To show this, we follow the argument in the proof of Proposition~\ref{proposition:computation-of-rho-invariant-of-J_0^i}.

Let $J_0=J$ and $J_k=P^k(J_0)$ for $1\le k\le n$. Let $\mu_k$ be a meridian of $J_k$. 

For $k=n$, the meridian $\mu_n$ is freely homotopic to the meridian of $\partial Z$, which is nontrivial in $\cP^0\pi_1(W_n)/\cP^1\pi_1(W_n)\cong H_1(W_n)\cong H_1(\partial Z)$ as we saw in the proof of~(H5). 

For $k=n-1$, we assert that $\mu_{n-1}$ is nontrivial in $\cP^1\pi_1(W_{n-1})/\cP^2\pi_1(W_{n-1})$. 
By our choice of $\cP^2\pi_1(W_{n-1})$, we only need to show that $\mu_{n-1}$ is nontrivial in the quotient $H_1(W_{n-1};\Q[t^{\pm 1}]\Sigma^{-1})/\Im H_1(Z;\Q[t^{\pm 1}]\Sigma^{-1})$.
An argument in the proof of Proposition~\ref{proposition:computation-of-rho-invariant-of-J_0^i} shows that $\mu_{n-1}$ is nontrivial in $H_1(W_{n-1};\Q[t^{\pm 1}])$.
Note that $\mu_{n-1} \in H_1(W_{n-1};\Q[t^{\pm 1}])$ is $\lambda(t)$-torsion, since $\mu_{n-1}$ is identified with the axis of the satellite operator $P$ and $\lambda(t)$ is the Alexander polynomial of $P(U)$ that annihilates the Alexander module.
So, by our choice of $\Sigma$, the meridian $\mu_{n-1}$ is still nontrivial in $H_1(W_{n-1};\Q[t^{\pm 1}]\Sigma^{-1})$. Therefore $\mu_{n-1}$ is nontrivial in $H_1(\overline{W_{n-1}\sm Z};\Q[t^{\pm 1}]\Sigma^{-1})$.

We have $H_1(M((-Q(J))\# Q(U));\Q[t^{\pm 1}]\Sigma^{-1})=0$ by our choice of $\Sigma$ and by the assumption that the Alexander polynomial of $Q(U)$ is relatively prime to~$\lambda(t)$. Using a Mayer-Vietoris sequence for the pair $(\overline{W_{n-1}\sm Z},Z)$, one can readily see that 
\[
H_1(\overline{W_{n-1}\sm Z};\Q[t^{\pm 1}]\Sigma^{-1})\cong H_1(W_{n-1};\Q[t^{\pm 1}]\Sigma^{-1})/\Im H_1(Z;\Q[t^{\pm 1}]\Sigma^{-1}).
\]
Therefore, $\mu_{n-1}$ is nontrivial in $H_1(W_{n-1};\Q[t^{\pm 1}]\Sigma^{-1})/\Im H_1(Z;\Q[t^{\pm 1}]\Sigma^{-1})$. It follows that $\mu_{n-1}$ is nontrivial in $\cP^1\pi_1(W_{n-1})/\cP^2\pi_1(W_{n-1})$.

For $k\le n-2$, suppose $\mu_{k+1}^1$ is nontrivial in the quotient $\cP^{n-k-1}\pi_1(W_{k+1})/\cP^{n-k}\pi_1(W_{k+1})$. We use the following lemma, which will play the role of Lemma~\ref{lemma:Blanchfield-bordism} for the purpose of this section.

\begin{lemma}\label{lemma:coprime-Blanchfield-bordism}
Let $\pi=\pi_1(W_{k+1})$ and $\Gamma=\pi/\cP^{n-k}\pi$. Let $\psi\colon \pi\to \Gamma$ be the quotient map. Then, $(W_{k+1},\psi)$ is a $\Q$-coefficient Blanchfield bordism.
\end{lemma}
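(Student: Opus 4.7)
By Proposition~\ref{proposition:lagrangian-Blanchfield-bordism}, since every boundary component $M$ of $W_{k+1}$ is a zero-framed surgery manifold of a knot and therefore satisfies $\rank_{\Q} H_1(M;\Q)=1$, it suffices to show that $(W_{k+1},\psi)$ is a $\Q$-coefficient Lagrangian bordism. The plan parallels that of Lemma~\ref{lemma:Blanchfield-bordism}, now with two sources of Lagrangian pairs, one from $V$ and one from $Z$.

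The slice disk exterior $V$ is an integral $h$-solution for every $h$, and $Z$ is an integral $n$-solution by construction. One verifies from the definition of the tower $\{\cP^i\pi\}$ used in this section, in particular the key property noted in the proof of (H5) that $\pi_1(Z)^{(n)}$ maps into $\cP^{n+1}\pi\subset \cP^{n-k}\pi$, that both $\psi|_V$ and $\psi|_Z$ factor through the appropriate derived quotients of their respective fundamental groups. Lemma~\ref{lemma:integral-solution-lagrangian-bordism} then provides Lagrangian/dual pairs in $H_2(V;\Q\Gamma)$ and $H_2(Z;\Q\Gamma)$. Pushing these forward under the inclusions produces $m:=m_V+m_Z$ Lagrangian pairs for the equivariant intersection form $\lambda_\Gamma$ on $W_{k+1}$. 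The algebraic relations among these elements are preserved by naturality of intersection forms, and cross intersections between $V$-cycles and $Z$-cycles vanish because $V$ and $Z$ occupy disjoint regions of $W_{k+1}$.

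It remains to bound $\rank_{\Q}\Coker\{H_2(\partial W_{k+1};\Q)\to H_2(W_{k+1};\Q)\}$ by $2m$. We first do this for the subcobordism $W_n=V\cup C\cup Z$. This is where the main technical obstacle lies: $C$ is glued to $V$ and $Z$ along two different boundary components, so Lemma~\ref{lemma:lagrangian-bordism-extension} does not apply directly. Indeed, $H_1(M(K)\sqcup M((-Q(J))\#Q(U));\Q)\to H_1(C;\Q)$ has a one-dimensional kernel, since the meridian generators on the two sides are identified in $C$. We resolve this by a direct Mayer-Vietoris computation for the decomposition $W_n=(V\sqcup Z)\cup C$, using Lemma~\ref{lemma:C-and-E-injectivity}, which supplies $H_i(\partial_{-}C;\Q)\hookrightarrow H_i(C;\Q)$ for $i=1,2$ and $H_2(\partial_{+}C;\Q)\twoheadrightarrow H_2(C;\Q)$, together with the individual cokernel bounds coming from $V$ and $Z$, to verify that the cokernel rank for $W_n$ does not exceed $2m$.

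Once $(W_n,\psi|_{W_n})$ is shown to be a $\Q$-coefficient Lagrangian bordism, the remaining pieces $E_{n-1},\dots,E_{k+1}$ are attached inductively to the single top-boundary component produced at each stage. Each such step falls under Lemma~\ref{lemma:lagrangian-bordism-extension} with hypotheses furnished by Lemma~\ref{lemma:C-and-E-injectivity}, exactly as in the proof of Lemma~\ref{lemma:Blanchfield-bordism}. Combining these two steps yields the desired Lagrangian bordism structure on $W_{k+1}$, and hence by Proposition~\ref{proposition:lagrangian-Blanchfield-bordism} the Blanchfield bordism property follows.
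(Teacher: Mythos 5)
Your proposal is correct in overall structure and correctly identifies the technical obstruction, but the key step is asserted rather than proved, and the appeal to (H5) is a detour. To address the factoring claim first: for Lemma~\ref{lemma:integral-solution-lagrangian-bordism} to apply to $Z$ you do not need the finer property from (H5) that $\pi_1(Z)^{(n)}$ lands in $\cP^{n+1}\pi_1(W_0)$ --- it is enough (and simpler, and what the paper uses) that $\Gamma^{(n-k)}=\{e\}$ because $\pi^{(n-k)}\subset\cP^{n-k}\pi$, so \emph{any} map $\pi_1(Z)\to\Gamma$ factors through $\pi_1(Z)/\pi_1(Z)^{(n-k)}$, and $Z$, being an integral $n$-solution, is an integral $(n-k)$-solution. (Also note $m_V=0$ automatically since $V$ is a slice disk exterior with $H_2(V;\Q)=0$, so your cross-intersection remark, while true, is vacuous here.)

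The substantive gap is in the cokernel bound for $W_n$. The paper resolves the attachment of $Z$ by isolating a new Lemma~\ref{lemma:solution-extension-Blanchfield-bordism}, applied to $(W,X)=(V\cup C,Z)$: one first gets $(V\cup C,\psi)$ a Lagrangian bordism by the already-available Lemmas~\ref{lemma:lagrangian-bordism-extension} and~\ref{lemma:C-and-E-injectivity}, and then handles $Z$ separately using that $H_1(\partial Z;\Q)\to H_1(Z;\Q)$ is an isomorphism and $H_2(\partial Z;\Q)\to H_2(Z;\Q)$ is trivial for an integral solution. Your alternative decomposition $W_n=(V\sqcup Z)\cup C$ can also be made to work, but the claimed ``direct Mayer--Vietoris computation'' is exactly where the work lies and you have not carried it out. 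The observation that makes it go through is not merely Lemma~\ref{lemma:C-and-E-injectivity} plus ``individual cokernel bounds'' --- it is the fact that the fundamental class of the gluing component $M((-Q(J))\#Q(U))=\partial Z$ dies in $H_2(Z;\Q)$ (integral solution), hence by Mayer--Vietoris its image in $H_2(W_n;\Q)$ via $C$ also vanishes; this is what lets you trade the surjection $H_2(\partial_+C;\Q)\twoheadrightarrow H_2(C;\Q)$ for one by $H_2(\partial W_n;\Q)\oplus H_2(Z;\Q)$, yielding the bound by $2m_Z$. Without this ingredient the argument is incomplete; once it is added, your route is essentially an unrolled version of the paper's Lemma~\ref{lemma:solution-extension-Blanchfield-bordism} under a different decomposition.
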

\begin{proof}
We have
\[
W_{k+1}=V\cup C \cup Z \cup E_{n-1}\cup E_{n-2}\cup \cdots \cup E_{k+1}.
\]

We have $\Gamma^{(n-k)}=\{e\}$ since $\pi^{(n-k)}\subset \cP^{n-k}\pi$. It follows from Lemma~\ref{lemma:integral-solution-lagrangian-bordism} that $(V,\psi)$ is a $\Q$-coefficient Lagrangian bordism since the slice disk exterior $V$ is an integral $(n-k)$-solution. Then, $(V\cup C,\psi)$ is a $\Q$-coefficient Lagrangian bordism by Lemmas~\ref{lemma:lagrangian-bordism-extension} and \ref{lemma:C-and-E-injectivity}. 
The integral $n$-solution $Z$ is also an integral $(n-k)$-solution, and therefore $(Z,\psi)$ is a $\Q$-coefficient Lagrangian bordism by Lemma~\ref{lemma:integral-solution-lagrangian-bordism}. We use the following lemma.
\begin{lemma}\label{lemma:solution-extension-Blanchfield-bordism}
Let $R=\Z_p$ or $\Q$. Let $W$ and $X$ be compact connected 4-manifolds with boundary such that $\partial X = W\cap X$ and it is a component of $\partial W$. Let $W'=W\cup X$ and let $\psi\colon \pi_1(W')\to G$ be a homomorphism where $G$ is a poly-torsion-free-abelian group. Suppose that $W'$ has nonempty boundary, and that the inclusion induces an injection $H_1(\partial X;R)\to H_1(X;R)$ and a trivial homomorphism $H_2(\partial X;R)\to H_2(X;R)$.
If each of $(W,\psi|_{\pi_1(W)})$ and $(X,\psi|_{\pi_1(X)})$ is an $R$-coefficient Lagrangian bordism, then so is $(W',\psi)$.
\end{lemma}
\begin{proof}
Suppose that elements $\ell_1$,~\ldots, $\ell_k$, $d_1$,~\ldots, $d_k$ in $H_2(W;RG)$ and elements $\ell_{k+1}$,~\ldots, $\ell_{k+m}$, $d_{k+1}$,~\ldots, $d_{k+m}$ in $H_2(X;RG)$ satisfy Definition~\ref{definition:lagrangian-bordism}~(1). Then, viewing the $\ell_i$ and $d_j$ as elements in $H_2(W';RG)$, $1\le i,j\le k+m$, they satisfy Definition~\ref{definition:lagrangian-bordism}~(1). 

Using the assumption on the maps $H_i(\partial X;R)\to H_i(X;R)$ for $i=1,2$ and a Mayer-Vietoris sequence of the pair $(W,X)$, one can readily see that 
\[
\rank_R \Coker \{H_2(\partial W';R)\rightarrow H_2(W';R)\} \le 2(k+m).
\]
Therefore, $(W',\psi)$ is an $R$-coefficient Lagrangian bordism.
\end{proof}
We continue to prove Lemma~\ref{lemma:coprime-Blanchfield-bordism}.
Note that $H_1(\partial Z;\Q)\to H_1(Z;\Q)$ is an isomorphism and $H_2(\partial Z;\Q)\to H_2(Z;\Q)$ is trivial since $Z$ is an integral $n$-solution.
By applying Lemma~\ref{lemma:solution-extension-Blanchfield-bordism} to $(W,X) = (V\cup C, Z)$, it follows that $(W_n,\psi)$ is a $\Q$-coefficient Lagrangian bordism.

Now, one can show that $(W_{k+1},\psi)$ is a $\Q$-coefficient Lagrangian bordism by repeatedly applying Lemmas~\ref{lemma:lagrangian-bordism-extension} and \ref{lemma:C-and-E-injectivity}. It follows from Proposition~\ref{proposition:lagrangian-Blanchfield-bordism} that $(W_{k+1},\psi)$ is a $\Q$-coefficient Blanchfield bordism.
\end{proof}
We continue to prove~(H6). Using the argument in the proof of Proposition~\ref{proposition:computation-of-rho-invariant-of-J_0^i} (with Lemma~\ref{lemma:coprime-Blanchfield-bordism} in place of Lemma~\ref{lemma:Blanchfield-bordism}), one can show that $\mu_k^1$ is nontrivial in the quotient $\cP^{n-k}\pi_1(W_k)/\cP^{n-k+1}\pi_1(W_k)$. By induction, the meridian $\mu=\mu_0$ of $J=J_0$ is nontrivial in $\cP^n\pi_1(W_0)/\cP^{n+1}\pi_1(W_0)$. Therefore, $\phi$ satisfies~(H6).
 \def\qedsymbol{}
\end{proof}
Therefore, $\phi$ satisfies both (H5) and~(H6).
It completes the proof of Proposition~\ref{proposition:phi-coprime}.
\end{proof}

\subsubsection*{Generalization of Theorem~\ref{theorem:coprime-satellites}}
Theorem~\ref{theorem:coprime-satellites} generalizes to the case of multi-composition of distinct satellite operators, as stated below. 

\begin{theorem}[{Generalization of Theorem~\ref{theorem:coprime-satellites}}]\label{theorem:coprime-satellites-generalization}
For $1\le k\le n$, let $P_k\colon \cC\to \cC$ be a winding number zero satellite operator with axis $\eta_k$ such that $\Bl_k(\eta_k,\eta_k)$ is nontrivial where $\Bl_k$ is the Blanchfield form for $P_k(U)$. If $Q\colon \cC\to \cC$ is a winding number zero satellite operator such that the Alexander polynomials of $P_n(U)$ and $Q(U)$ are relatively prime in $\Q[t^{\pm 1}]$, then the operators $P_n\circ P_{n-1}\circ \cdots \circ P_1$ and $Q$ are distinct. 
\end{theorem}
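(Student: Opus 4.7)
The plan is to adapt the proof of Theorem~\ref{theorem:coprime-satellites} from Sections~\ref{subsection:proof-theorem-coprime}--\ref{subsection:proof-proposition-nonhomomorphism} to the iterated setting, just as Theorems~\ref{theorem:main-generalization} and~\ref{theorem:non-homomorphism-generalization} generalize their non-iterated predecessors. First I would apply Theorem~\ref{theorem:universal-bound} to fix a constant $L_1$ bounding $|\rhot(M(P_k(U)),\phi)|$ uniformly in $\phi$ and in $1\le k\le n$, together with $L_2$ bounding $|\rhot(M(-(P_n\circ\cdots\circ P_1)(U)),\psi)|$ uniformly in $\psi$, and then pick a knot $J$ with $\rhot(M(J),\epsilon)>nL_1+L_2$. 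Assuming for contradiction that $(P_n\circ\cdots\circ P_1)(U)=Q(U)$ and $(P_n\circ\cdots\circ P_1)(J)=Q(J)$ in~$\cC$, the knot
\[
K=(P_n\circ\cdots\circ P_1)(J)\#(-(P_n\circ\cdots\circ P_1)(U))\#(-Q(J))\# Q(U)
\]
is slice. Setting $J_0=J$, $J_k=P_k(J_{k-1})$ and $E_k=E(P_{k+1},J_k)$ for $0\le k\le n-1$, the 4-manifold $W_0$ is assembled exactly as in Section~\ref{subsection:proof-theorem-coprime} from the slice-disk exterior $V$, the standard cobordism $C$, an integral $n$-solution $Z$ for $(-Q(J))\#Q(U)$ (which exists by Lemma~\ref{lemma:affine-inclusion}, since $(P_n\circ\cdots\circ P_1)(J)\#(-(P_n\circ\cdots\circ P_1)(U))\in\cF_n$), and the cobordisms $E_k$.

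The signature-defect computation then transfers verbatim: $S_G(C)=0$ and $S_G(E_k)=0$ by construction of the standard pieces, while $S_G(V)=0$ and $S_G(Z)=0$ follow from the amenable signature theorem (Theorem~\ref{theorem:vanishing-rho-invariants}) once the chosen homomorphism $\phi\colon\pi_1(W_0)\to G$ lies in Strebel's class $D(\Q)$, satisfies $G^{(n+1)}=\{e\}$, kills $\pi_1(Z)^{(n)}$, and sends the relevant meridians to elements of infinite order. The lower bound $\rhot(\partial W_0,\phi)>\rhot(M(J),\epsilon)-nL_1-L_2>0$ is then forced by the choice of $J$ as soon as $\phi$ factors through the abelianization on $\pi_1(M(J))$, contradicting the identity $\rhot(\partial W_0,\phi)=S_G(W_0)$.

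The genuine work, and the main obstacle, is the construction of $\phi$ satisfying the analogues of (H5) and (H6). I would define the filtration $\cP^k\pi_1(W_0)$ exactly as in Section~\ref{subsection:proof-proposition-nonhomomorphism}, but with $\lambda(t)$ now denoting the Alexander polynomial of $P_n(U)$; the coprimality hypothesis between the Alexander polynomials of $P_n(U)$ and $Q(U)$ is precisely what yields $H_1(M((-Q(J))\#Q(U));\Q[t^{\pm1}]\Sigma^{-1})=0$, and so a Mayer--Vietoris argument still detects $\mu_{n-1}$ (identified with the axis $\eta_n$ of $P_n$) in $H_1(W_{n-1};\Q[t^{\pm1}]\Sigma^{-1})/\Im H_1(Z;\Q[t^{\pm1}]\Sigma^{-1})$, once the argument of Proposition~\ref{proposition:computation-of-rho-invariant-of-J_0^i} and $\Bl_n(\eta_n,\eta_n)\ne 0$ are used to place $\mu_{n-1}$ nontrivially in $H_1(W_{n-1};\Q[t^{\pm1}])$ to begin with. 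A reverse induction on $k=n-1,n-2,\ldots,0$, following the proof of Proposition~\ref{proposition:computation-of-rho-invariant-of-J_0^i} with Lemma~\ref{lemma:coprime-Blanchfield-bordism} in place of Lemma~\ref{lemma:Blanchfield-bordism}, then propagates nontriviality through the levels of $\cP^{\bullet}$; each induction step consumes one instance of $\Bl_{k+1}(\eta_{k+1},\eta_{k+1})\ne 0$, so the hypotheses cover precisely the stages $k+1=1,\dots,n$. The Blanchfield-bordism property of $W_{k+1}$ needed by the induction is furnished by the verbatim analogue of Lemma~\ref{lemma:coprime-Blanchfield-bordism}, whose proof rests only on Lemmas~\ref{lemma:lagrangian-bordism-extension}, \ref{lemma:C-and-E-injectivity}, \ref{lemma:integral-solution-lagrangian-bordism}, \ref{lemma:solution-extension-Blanchfield-bordism} and Proposition~\ref{proposition:lagrangian-Blanchfield-bordism}, and therefore carries over unchanged. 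Reaching $k=0$ yields $\mu_0$ nontrivial in $\cP^n\pi_1(W_0)/\cP^{n+1}\pi_1(W_0)$, which is exactly (H6), and completes the contradiction.
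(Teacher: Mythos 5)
Your proposal is correct and takes essentially the same approach as the paper: it reproduces the proof of Theorem~\ref{theorem:coprime-satellites} with the list of modifications the paper records (replace $P^k$ by $P_k\circ\cdots\circ P_1$, take $E_k=E(P_{k+1},\tilde P_k(J))$, bound $|\rhot(M(P_k(U)),\phi)|$ uniformly over $k$, and let $\lambda(t)$ be the Alexander polynomial of $P_n(U)$ in the definition of the localization $\Sigma$), together with the observation that each induction step in the analogue of Proposition~\ref{proposition:phi-coprime} draws on $\Bl_{k+1}(\eta_{k+1},\eta_{k+1})\neq 0$. Your accounting of which Blanchfield hypothesis is used at each stage is accurate and matches the paper's terse proof.
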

\begin{proof}
One can prove Theorem~\ref{theorem:coprime-satellites-generalization} following the proof of Theorem~\ref{theorem:coprime-satellites}, with changes listed below. For brevity, let $\tilde{P}_k = P_k\circ P_{k-1}\circ \cdots \circ P_1$. 
\begin{enumerate}
	\item Let $L_1$ and $L_2$ be constants such that  $|\rhot(M(P_k(U)),\phi)| < L_1$ for all $\phi$ and $1\le k\le n$, and $|\rho(M(-\tilde{P}_n(U), \phi)| < L_2$ for all $\phi$.
	\item Replace the operator $P^k$ with $\tilde{P}_k$, $1\le k\le n$. 
	\item For $0\le k\le n-1$, let $E_k = E(P_{k+1}, \tilde{P}_k(J))$. Then,
	\[
	\partial E_k=M(\tilde{P}_k(J))\sqcup M(P_{k+1}(U)) \sqcup (-M(\tilde{P}_{k+1}(J))).
	\] 
	\item In the proof of Proposition~\ref{proposition:phi-coprime}, let $\lambda(t)$ be the Alexander polynomial of $P_n(U)$.
	\qedhere
\end{enumerate}

\end{proof}


\appendix

\section{Slice satellite operators}\label{section:slice-satellite-operators}

This section is independent of the rest of the paper.  The aim of this section is to show how one can readily prove some special cases of Theorem~\ref{theorem:main} using the proof of \cite[Theorem~4.11]{Cha:2010-01}.
Our work is partially motivated by this.

For example, in Theorem~\ref{theorem:main-special-case} below we show that a simple modification of the proof of \cite[Theorem~4.11]{Cha:2010-01} gives a proof of Theorem~\ref{theorem:main} with an additional assumption that $P(U)$ is slice.
In Theorem~\ref{theorem:infinite-rank}, we prove that a (non-iterated) satellite operator $P\colon \cC \to \cC$ has infinite rank image by combining Theorem~\ref{theorem:main-special-case} and elementary properties of abelian groups under the assumption for Theorem~\ref{theorem:main}, without assuming that $P(U)$ is slice.

\subsection{A special case of Theorem~\ref{theorem:main} for slice satellite operators}
\label{subsection:slice-satellite-operators}

In the proof of \cite[Theorem~4.11]{Cha:2010-01} the following was shown implicitly. Let $P\colon \cC\to \cC$ be a winding number zero satellite operator with axis $\eta$. Suppose that $P(U)$ is slice and $H_1(M(P(U));\Z[t^{\pm 1}])$ is nontrivial and generated by the axis $\eta$. Then, for given $n\ge 1$ there exist knots $J_0^i$, $i\ge 1$, such that the set $\{P^n(J_0^i)\}_{i\ge 1}$ is linearly independent in $\cF_n/\cF_{n.5}$. Note that since the Blanchfield form $\Bl$ of $P(U)$ is nonsingular, if the $\eta$ generates $H_1(M(P(U));\Z[t^{\pm 1}])$, then $\Bl(\eta, \eta)$ is nontrivial. Now we show that a simple modification of the proof of \cite[Theorem~4.11]{Cha:2010-01}  gives the following theorem,

\begin{theorem}[{Generalization of \cite[Theorem~4.11]{Cha:2010-01}}] \label{theorem:Cha-slice-infinite-rank}
Let $n\ge 1$. Let $P\colon \cC\to \cC$ be a winding number zero satellite operator with axis $\eta$. Suppose that $P(U)$ is slice and $\Bl(\eta,\eta)$ is nontrivial. Then, there exist infinitely many knots $J_0^i$, $i=1,2,\ldots$, such that the set $\{P^n(J_0^i)\}_{i\ge 1}$ is linearly independent in $\cF_n/\cF_{n.5}$.
\end{theorem}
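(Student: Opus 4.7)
The plan is to adapt the proof of Proposition~\ref{proposition:infinite-rank-iteration}, exploiting sliceness of $P(U)$ in two places. First, by Lemma~\ref{lemma:construction-of-n-solvable-knots}(2), $\langle P^n(\cC)\rangle\subset\cF_n$, so the claim is about the nonvanishing of the images of $P^n(J_0^i)$ in $\cF_n/\cF_{n.5}$. Second, letting $V_R$ denote a slice-disk exterior for $R=P(U)$ (an integral $n$-solution for every $n$), I will cap off every copy of $M(R)$ arising in the auxiliary cobordism by attaching $V_R$. This capping-off eliminates the sole use of the universal $\lt$-bound of Theorem~\ref{theorem:universal-bound} that constrained the main argument to a finite family (cf.\ Remark~\ref{remark:main-generalization}), and hence permits an infinite linearly independent family.

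Inductively applying Lemma~\ref{lemma:knots-J_0^i}, I will choose primes $p_1<p_2<\cdots$ with $\Bl^{p_i}(\eta,\eta)\ne 0$ (available because $\Bl(\eta,\eta)\ne 0$) and knots $J_0^i$ such that among all $p_j$-th roots of unity, $\sigma_{J_0^i}$ is nonzero only at $\omega_i^{\pm 1}=\exp(\pm 2\pi\sqrt{-1}/p_i)$. Suppose for contradiction that some finite combination $J=\#_i\, a_i P^n(J_0^i)\in\cF_{n.5}$ has $a_{i_0}\ne 0$, and let $V$ be an integral $(n.5)$-solution with $\partial V=M(J)$. Construct the cobordism $W_0$ from $V$, the standard cobordism $C$ and the infection cobordisms $E_k^i$ exactly as in Section~\ref{subsection:proof-of-Theorem-A}, then glue a copy of $V_R$ to each boundary component $M_k^{i,j}\cong M(R)$ to form a cobordism $W_0'$ with $\partial W_0'=\bigsqcup_{i,j}\epsilon_i M(J_0^i)^j$. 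Set $G=\pi_1(W_0')/\cP^{n+1}\pi_1(W_0')$ and define $\phi\colon\pi_1(W_0')\to G$ to be the quotient map, using the prime $p_{i_0}$ at the top level of the $\cP$-filtration as in Section~\ref{subsection:proof-of-lemma}. The Blanchfield bordism argument of Section~\ref{subsection:Blanchfield-bordism} verifying (H1)--(H2) carries over after incorporating each $V_R$ into the Lagrangian bordism decomposition via Lemma~\ref{lemma:solution-extension-Blanchfield-bordism}, using that $V_R$ is an integral $n$-solution and that $H_1(M(R);R)\to H_1(V_R;R)$ is an isomorphism while $H_2(M(R);R)\to H_2(V_R;R)=0$ is trivial.

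The identity $\rhot(\partial W_0',\phi)=S_G(W_0')$ then yields a contradiction. Novikov additivity, the vanishing results for $C$ and the $E_k^i$ from Section~\ref{subsection:proof-of-Theorem-A}, and the amenable signature theorem applied to the integral $(n.5)$-solution $V$ give $S_G(V)=S_G(C)=S_G(E_k^i)=0$. The new piece of the computation is that for each $V_R$, the meridian of $R$ is identified, through the infection cobordisms, with a meridian of $J$, whose image in $G$ has infinite order; applying Theorem~\ref{theorem:vanishing-rho-invariants} once more to $V_R$ yields $S_G(V_R)=\rhot(M(R),\phi)=0$. Thus $S_G(W_0')=0$. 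On the other hand, by (H2) and Lemma~\ref{lemma:properties-of-rho-invariants}(5), $\rhot(M(J_0^i)^j,\phi)=0$ for $i\ne i_0$ and equals $2\sigma_{J_0^{i_0}}(\omega_{i_0})/p_{i_0}$ for $i=i_0$, giving $\rhot(\partial W_0',\phi)=2a_{i_0}\sigma_{J_0^{i_0}}(\omega_{i_0})/p_{i_0}\ne 0$. The main point I expect to need care is ensuring both that the $V_R$-cappings preserve the Blanchfield bordism structure and that they contribute zero to the signature defect; the rest is a direct transcription of the arguments in Sections~\ref{subsection:proof-of-Theorem-A}--\ref{subsection:Blanchfield-bordism}.
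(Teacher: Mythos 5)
Your proposal is correct in substance, but it takes a genuinely different expository route from the paper's. The paper's proof of this theorem is essentially a citation delta: it lists four modifications to the proof of Cha's Theorem~4.11 in~\cite{Cha:2010-01} (use the same knots $J_0^i$; specialize to $K_k=P(U)$ and $\eta_k=\eta$; observe that the original argument only uses $\Bl(\eta,\eta)\ne 0$ and $\Bl^{p_i}(\eta,\eta)\ne 0$ rather than the stronger ``$\eta$ generates the Alexander module''; show there are infinitely many admissible primes). You instead re-derive the statement from the machinery of Section~\ref{section:iterated-satellites-infinite-rank} by inserting slice-disk-exterior caps on every $M(P(U))$ boundary component of~$W_0$. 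This is in fact close to what Cha's original proof does, and you correctly isolate the mechanism that sliceness buys: with the caps $V_R$ in place, the amenable signature theorem kills $\rhot(M(P(U)),\phi)$ outright, so the universal Cheeger--Gromov bound $L$ (whose presence in Section~\ref{subsection:proof-of-Theorem-A} forces the $J_0^i$ to depend on $m$, per Remark~\ref{remark:main-generalization}) is never needed and one gets a single infinite family.

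A few points deserve care but do not appear to be gaps. First, the hypothesis of Theorem~\ref{theorem:vanishing-rho-invariants} applied to $V_R$ requires the meridian of $P(U)$ to map to an infinite-order element of $G$; this holds because in the infection cobordism $E(P,K)$ a meridian of $P(U)$ is identified with a meridian of $P(K)$, so it maps to a generator of $H_1(W_0')\cong\Z$. Second, in your computation of $\rhot(\partial W_0',\phi)$, you only need nontriviality of $\phi|_{\pi_1(M(J_0^{i_0})^j)}$; the vanishing for $i\ne i_0$ comes for free from Lemma~\ref{lemma:knots-J_0^i}(3) regardless of whether the image is trivial or all of~$\Z_{p_{i_0}}$. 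Third, and most important to flag, the sentence ``the Blanchfield bordism argument of Section~\ref{subsection:Blanchfield-bordism}\ldots carries over'' glosses over the analogue of the isomorphisms~\eqref{equation:quotient-isomorphism} and~\eqref{equation:quotient-isomorphism-2} for the capped cobordisms $W_k'$, which is not an immediate consequence of Lemma~\ref{lemma:solution-extension-Blanchfield-bordism} (that lemma concerns the Lagrangian-bordism property, not the $\pi_1/\pi_1^{(m)}$ isomorphisms). Establishing these isomorphisms in the presence of the $V_R$-caps is exactly the content of~\cite[Assertion~1, p.~4799]{Cha:2010-01} in the setting where $P(U)$ is slice, so it is true, but the argument needs to be explicitly imported rather than silently inherited from Section~\ref{subsection:proof-of-lemma}. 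Finally, note that in eq.~\eqref{equation:top-level-rho} the paper has apparently dropped a factor of $1/p_1$; your version correctly retains the $1/p_{i_0}$, and since you only need nonvanishing of the result, this is harmless.
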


\begin{proof}
We list observations and modifications on the proof of \cite[Theorem~4.11]{Cha:2010-01} which are needed to obtain Theorem~\ref{theorem:Cha-slice-infinite-rank}.

\begin{enumerate}
	\item We use the same $J_0^i$ as in \cite[Theorem~4.11]{Cha:2010-01}, which were obtained in \cite[Proposition~4.12]{Cha:2010-01}. (For our purpose, the knots $J_0^i$ need not satisfy (C3) in \cite[Proposition~4.12]{Cha:2010-01}.)
	\item In \cite[Theorem~4.11]{Cha:2010-01} we take $K_k=P(U)$ and $\eta_k=\eta$ for all $k$. Then $J_n^i$ therein is $P^n(J_0^i)$ for each $i$. 
	\item For a prime $p$, let $\Bl^p$ denote the Blanchfield form  
	\[
	\Bl\nolimits^p\colon H_1(M(P(U));\Z_p[t^{\pm 1}])\times H_1(M(P(U));\Z_p[t^{\pm 1}])\to \Z_p(t)/\Z_p[t^{\pm 1}].
	\] 
	In \cite[Theorem~1.4]{Cha:2010-01} it was assumed that $P(U)$ is slice and the $\eta$ generates the Alexander module $H_1(M(P(U));\Z[t^{\pm 1}])$; this assumption was used in the first paragraph in \cite[p.~4801]{Cha:2010-01}, but the only  property of $\eta$ one actually needs is that $\Bl(\eta, \eta)$ is nontrivial and $\Bl^{p_i}(\eta,\eta)$ is nontrivial for each prime $p_i$ associated with $J_0^i$. 
	\item If $\Bl(\eta,\eta)=f(t)/g(t)\in \Q(t)/\Z[t^{\pm 1}]$ for some $f(t),g(t)\in \Z[t^{\pm 1}]$, $\deg f < \deg g$, then one can readily see that $\Bl^p(\eta,\eta)$ is nontrivial for any prime $p$ that is greater than the absolute values of all the coefficients of $f(t)$ and~$g(t)$.
	Therefore, if $\Bl(\eta, \eta)$ is nontrivial, then there exist infinitely many primes $p_1 <p_2 < \cdots$ such that the primes $p_i$ satisfy (C1)--(C3) in \cite[Proposition~4.12]{Cha:2010-01} and $\Bl^{p_i}(\eta, \eta)$ is nontrivial for each $p_i$. We use these primes $p_i$ instead of the primes $p_i$ in the proof of \cite[Theorem~4.11]{Cha:2010-01}.
\qedhere
\end{enumerate}
\end{proof}

Using Theorem~\ref{theorem:Cha-slice-infinite-rank} one can obtain the following theorem.
\begin{theorem}[{Special case of Theorem~\ref{theorem:main} when $P(U)$ is slice}]\label{theorem:main-special-case}
Let $P\colon \cC \to \cC$ be a winding number zero satellite operator with axis $\eta$. If $P(U)$ is slice and $\Bl(\eta,\eta)$ is nontrivial, then $\langle P^n(\cC)\rangle/\langle P^{n+1}(\cC)\rangle$ has infinite rank for each $n\ge 0$. 
\end{theorem}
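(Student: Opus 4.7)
The plan is to deduce Theorem~\ref{theorem:main-special-case} directly from Theorem~\ref{theorem:Cha-slice-infinite-rank} by pushing the linear independence statement there from $\cF_n/\cF_{n.5}$ into the quotient $\langle P^n(\cC)\rangle/\langle P^{n+1}(\cC)\rangle$, together with Proposition~\ref{proposition:zeroth-quotient} to handle the lowest level.

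The case $n=0$ is separate: since $\langle P^0(\cC)\rangle/\langle P(\cC)\rangle = \cC/\langle P(\cC)\rangle$ and $P$ has winding number zero, Proposition~\ref{proposition:zeroth-quotient} already gives infinite rank, with no use of the Blanchfield hypothesis.

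For $n\ge 1$, I would first invoke Theorem~\ref{theorem:Cha-slice-infinite-rank} to produce an infinite family of knots $J_0^1, J_0^2, \ldots$ such that $\{P^n(J_0^i)\}_{i\ge 1}$ is linearly independent in $\cF_n/\cF_{n.5}$. Because $P$ has winding number zero and $P(U)$ is slice, Lemma~\ref{lemma:construction-of-n-solvable-knots}(2), applied at both levels $n$ and $n+1$, yields the inclusions
\[
\langle P^n(\cC)\rangle \subset \cF_n \quad\text{and}\quad \langle P^{n+1}(\cC)\rangle \subset \cF_{n+1} \subset \cF_{n.5},
\]
the last containment being the standard refinement of the Cochran--Orr--Teichner filtration. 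These inclusions induce a well-defined group homomorphism
\[
\langle P^n(\cC)\rangle/\langle P^{n+1}(\cC)\rangle \longrightarrow \cF_n/\cF_{n.5}
\]
sending the class of each $P^n(J_0^i)$ to its class on the right. Since linear independence of the images forces linear independence of the pre-images, $\{P^n(J_0^i)\}_{i\ge 1}$ is already linearly independent in $\langle P^n(\cC)\rangle/\langle P^{n+1}(\cC)\rangle$, so the quotient has infinite rank.

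The main obstacle has in effect been absorbed into Theorem~\ref{theorem:Cha-slice-infinite-rank}, whose proof is the modification of the argument for \cite[Theorem~4.11]{Cha:2010-01} sketched in this appendix. The genuinely new input needed there is the replacement of the assumption that $\eta$ generates the Alexander module by the weaker hypothesis $\Bl(\eta,\eta)\ne 0$; this is handled by the observation in item~(4) of the proof of Theorem~\ref{theorem:Cha-slice-infinite-rank} that $\Bl^{p_i}(\eta,\eta)$ remains nontrivial for all sufficiently large primes $p_i$, which provides the supply of primes needed to run the $L^2$-signature computation inherited from~\cite{Cha:2010-01}. With that in place, the deduction of Theorem~\ref{theorem:main-special-case} from Theorem~\ref{theorem:Cha-slice-infinite-rank} is purely formal.
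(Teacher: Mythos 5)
Your proposal is correct and follows essentially the same route as the paper: invoke Theorem~\ref{theorem:Cha-slice-infinite-rank} for the knots $J_0^i$, use Lemma~\ref{lemma:construction-of-n-solvable-knots}(2) to get $\langle P^{n+1}(\cC)\rangle\subset\cF_{n+1}\subset\cF_{n.5}$, and push linear independence through the induced quotient map, with Proposition~\ref{proposition:zeroth-quotient} handling $n=0$. You have in fact corrected a reference typo in the paper's proof, which cites Proposition~\ref{proposition:nonzero-winding-number} rather than Proposition~\ref{proposition:zeroth-quotient} for the $n=0$ case.
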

\begin{proof}
The case $n=0$ was proven in Proposition~\ref{proposition:nonzero-winding-number}. For $n\ge 1$, let $J_0^i$, $i\ge 1$, be the knots in Theorem~\ref{theorem:Cha-slice-infinite-rank}. Then obviously $P^n(J_0^i)\in P^n(\cC)$ for all $i$ and they are linearly independent in $\cC/\cF_{n.5}$. Since $P(U)$ is slice and $P$ has winding number zero, by Lemma~\ref{lemma:construction-of-n-solvable-knots} it follows that $\langle P^{n+1}(\cC)\rangle\subset \cF_{n+1}\subset \cF_{n.5}$. Therefore, $P^n(J_0^i)$ are linearly independent in $\langle P^n(\cC)\rangle/\langle P^{n+1}(\cC)\rangle$.
\end{proof}

\subsection{Satellites of infinite rank}\label{subsection:affine-linear-independence}

The following statement is a consequence of Theorem~\ref{theorem:main}:

\begin{theorem}\label{theorem:infinite-rank}
Let $P\colon \cC\to \cC$ be a winding number zero satellite operator with axis $\eta$. If $\Bl(\eta,\eta)$ is nontrivial, then $\langle P(\cC)\rangle$ has infinite rank.
\end{theorem}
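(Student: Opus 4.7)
The plan is to reduce to the case where the pattern $P(U)$ is slice, covered by Theorem~\ref{theorem:main-special-case}. To this end I would replace $P$ by the modified satellite operator $P'\colon\cC\to\cC$ whose pattern is $P\csum(-P(U))\subset S^1\times D^2$, with the connected sum performed inside a small ball disjoint from both $P$ and the axis~$\eta$. Then $P'$ has winding number zero, its axis is still $\eta$, and $P'(K)=P(K)\csum(-P(U))$ for every knot~$K$. In particular $P'(U)=P(U)\csum(-P(U))$ is slice.

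Next I would verify that the hypothesis of Theorem~\ref{theorem:main-special-case} for $P'$, namely $\Bl_{P'(U)}(\eta,\eta)\ne 0$, still holds. Under the orthogonal decomposition of the Blanchfield form of a connected sum, $\Bl_{P'(U)} = \Bl_{P(U)}\oplus \Bl_{-P(U)}$. Since the summing ball used to form $P'$ is disjoint from $\eta$, a Mayer--Vietoris argument shows that the class of $\eta$ in the Alexander module of $P'(U)$ is carried entirely by the $P(U)$-summand. Hence $\Bl_{P'(U)}(\eta,\eta)=\Bl_{P(U)}(\eta,\eta)$, which is nonzero by hypothesis.

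Theorem~\ref{theorem:main-special-case} applied to $P'$ (the $n=1$ case) then yields that $\langle P'(\cC)\rangle/\langle (P')^2(\cC)\rangle$ has infinite rank, and hence $\langle P'(\cC)\rangle$ itself has infinite rank. Now every element of $P'(\cC)$ is of the form $P(K)\csum(-P(U))$ and therefore lies in $\langle P(\cC)\rangle$, since $P(K)$ and $-P(U)$ both belong to $\pm P(\cC)$. Consequently $\langle P'(\cC)\rangle\subset\langle P(\cC)\rangle$, and since any subgroup of a finite rank abelian group has finite rank, $\langle P(\cC)\rangle$ must also have infinite rank.

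The main technical point is the Blanchfield computation in the second step: one must check that the geometric modification does not disturb the class of the axis, i.e., that $\eta$ contributes only to the $P(U)$-summand of the decomposition and not to the $-P(U)$-summand, where an opposite-sign contribution could in principle cause cancellation. This is straightforward once the summing ball is chosen disjoint from $\eta$, but it is the only point at which the geometry of the modification interacts with the algebraic hypothesis on~$\Bl$.
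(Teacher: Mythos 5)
Your proof is correct and begins the same way as the paper's: both reduce to the slice-pattern case by passing to the modified operator $P' := P\csum(-P(U))$ (which the paper calls $Q$), verify that the Blanchfield condition $\Bl(\eta,\eta)\ne 0$ survives (using the orthogonal decomposition of the Blanchfield form under connected sum, with $\eta$ carried by the $P(U)$-summand), and invoke Theorem~\ref{theorem:main-special-case} to get infinite rank of $\langle P'(\cC)\rangle$. The final step is where the two arguments genuinely diverge. You observe that $P'(K)=P(K)\csum(-P(U))$ lies in $\langle P(\cC)\rangle$ for every $K$, hence $\langle P'(\cC)\rangle\subset\langle P(\cC)\rangle$, and then use the fact that a subgroup of a finite-rank abelian group has finite rank. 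The paper instead proves an auxiliary abelian-group statement (Lemma~\ref{lemma:affine-linear-independence}): if the differences $\{a_i-a_0\}_{i\ge 1}$ are linearly independent, then some cofinite subfamily $\{a_i\}_{i>N}$ is already linearly independent. Applied to $a_i=P(J_0^i)$ (with $a_0=P(U)$), this upgrades linear independence of $\{P(J_0^i)\csum(-P(U))\}_i$ to linear independence of $\{P(J_0^i)\}_{i>N}$ themselves. Your route is shorter and more elementary and certainly suffices for the theorem as stated. What the paper's extra lemma buys is a strictly stronger conclusion --- an explicit infinite linearly independent family of knots lying in the image set $P(\cC)$, not merely in the subgroup $\langle P(\cC)\rangle$ --- and that sharper form is exactly what the paper cites when deriving Corollary~\ref{corollary:infinite-rank-fixed-Seifert-form} about knots sharing a fixed Seifert form.
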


In what follows, we observe that Theorem~\ref{theorem:infinite-rank} can also be obtained by using Theorem~\ref{theorem:main-special-case} and elementary properties of abelian groups.

\begin{proof}[An alternative proof of Theorem~\ref{theorem:infinite-rank}]
We use the following lemma.

\begin{lemma}\label{lemma:affine-linear-independence}
Let $G$ be an abelian group and $\{a_i\}_{i\ge 0}$ be an infinite sequence of elements in~$G$. If $\{a_i-a_0\mid i\ge 1\}$ is linearly independent in $G$, then there exists $N\ge 0$ such that $\{a_i\mid i> N\}$ is linearly independent in $G$.
\end{lemma}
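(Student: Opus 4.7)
The plan is to translate a linear dependence among the $a_i$'s into a linear dependence among the $b_i := a_i - a_0$, and then exploit the hypothesis that $\{b_i\}_{i\ge 1}$ is linearly independent to conclude that the coefficients must vanish. Let $H \subseteq G$ be the subgroup generated by $\{b_i\}_{i\ge 1}$; by hypothesis, this is free abelian with $\{b_i\}_{i\ge 1}$ as a basis. For any finite collection of distinct indices $n_1,\dots,n_k \ge 1$ and integers $d_1,\dots,d_k$, a relation $\sum_{j=1}^k d_j a_{n_j} = 0$ rewrites as
\[
\sum_{j=1}^k d_j\, b_{n_j} \;+\; s\, a_0 \;=\; 0, \qquad \text{where } s = \sum_{j=1}^k d_j.
\]
In particular, $s a_0 \in H$. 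So the subgroup $I = \{n \in \Z : n a_0 \in H\}$ of $\Z$ governs what relations are possible.

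First I would dispose of the case $I = \{0\}$. Here $s a_0 \in H$ forces $s = 0$, so the displayed equation reduces to $\sum_j d_j b_{n_j} = 0$; linear independence of $\{b_i\}$ gives $d_j = 0$ for all $j$, and hence the entire family $\{a_i\}_{i \ge 1}$ is linearly independent (so one may take $N = 0$).

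In the remaining case $I = n_0 \Z$ for some $n_0 \ge 1$, fix an expression $n_0 a_0 = \sum_{i \in S} c_i b_i$ with $S$ finite, and set $N = \max(S \cup \{0\})$. For any relation $\sum_j d_j a_{n_j} = 0$ with all $n_j > N$, the condition $s a_0 \in H$ forces $n_0 \mid s$; writing $s = m n_0$, the displayed equation becomes
\[
\sum_j d_j\, b_{n_j} \;+\; m \sum_{i \in S} c_i\, b_i \;=\; 0.
\]
Since $\{n_j\}$ and $S$ are disjoint by construction, and $\{b_i\}$ is a $\Z$-basis of $H$, this yields $d_j = 0$ for every $j$.

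There is no real obstacle here beyond bookkeeping; the only subtlety is the torsion case $I \ne 0$, where one must notice that the constraint $n_0 \mid s$ is exactly what is needed to push the contribution of $a_0$ into a part of $H$ supported on indices $\le N$, disjoint from $\{n_j\}$. Once $N$ is chosen past all of these indices, independence of $\{b_i\}$ finishes the argument.
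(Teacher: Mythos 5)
Your proof is correct and is essentially the same as the paper's, just with the bookkeeping made explicit: the paper chooses $N$ so that the cyclic group $\langle a_0\rangle\cap\langle b_1,b_2,\dots\rangle$ lands in $\langle b_1,\dots,b_N\rangle$ (it is rank $\le 1$ and a subgroup of a cyclic group, hence cyclic), while you parametrize the same intersection via the subgroup $I=\{n:na_0\in H\}\subseteq\Z$ and split according to whether $I$ is trivial or $n_0\Z$. The two descriptions are interchangeable, and both proofs reduce a putative relation among the $a_{n_j}$ ($n_j>N$) to a relation among the basis elements $b_i$ supported on $\{n_j\}\sqcup\{1,\dots,N\}$.
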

\begin{proof}
There is an integer $N>0$ such that
\begin{equation}
  \label{equation:choice-of-N}
  \langle a_0 \rangle\cap \langle a_1-a_0,a_2-a_0,\ldots \rangle\subset \langle a_1-a_0,a_2-a_0,\ldots, a_N-a_0\rangle  
\end{equation}
since the left-hand side has rank at most one.
We assert that $\{a_i\mid i> N\}$ is linearly independent in $G$.
Suppose that 
\[
c_1a_{i_1}+c_2a_{i_2}+\cdots +c_na_{i_n}=0\]
for some $c_j\in \Z$ and $i_j$ such that $N<i_1<i_2<\cdots <i_n$.
Then
\[
c_1(a_{i_1}-a_0) + c_2(a_{i_2}-a_0)+ \cdots +c_n(a_{i_n}-a_0)=-\Biggl(\sum_{j=1}^nc_j\Biggr) a_0.
\]
By our choice of $N$, the left-hand side linear combination lies in the right-hand side of~\eqref{equation:choice-of-N}.
Since $\{a_i-a_0\mid i\ge 1\}$ is linearly independent, it follows that every $c_i$ vanishes.
\end{proof}

Now let $J_0^i$ be the knots given in Theorem~\ref{theorem:Cha-slice-infinite-rank}. Let $K_i=P(J_0^i)$ for $i=1,2,\ldots$ and let $K_0=P(U)$. 
Let $Q:=P\# (-P(U))\colon \cC \to \cC$ be the satellite operator defined by 
\[
Q(J)=P(J)\#(-P(U)).
\]
Then, $K_i\#(-K_0)=Q(J_0^i)$ in $\cC$. Note that $Q(U)=P(U)\#(-P(U))$ is slice and $\Bl(\eta,\eta)$ is still nontrivial for the Blanchfield form $\Bl$ of $Q(U)$. From Theorem~\ref{theorem:main-special-case} and its proof, it follows that $Q(J_0^i)=K_i\#(-K_0)$, $i\ge 1$, are linearly independent in $\langle Q(\cC)\rangle/\langle Q^2(\cC)\rangle$, and hence in~$\cC$.
By Lemma~\ref{lemma:affine-linear-independence}, there exists $N>0$ such that $\{K_i\mid i> N\}$ is linearly independent in~$\cC$.
Therefore $\langle\cP(\cC)\rangle$ has infinite rank in~$\cC$.
\end{proof}

Theorem~\ref{theorem:infinite-rank} has an interesting application.
As discussed at the end of the introduction, we prove the following generalization of the main results of~\cite{Livingston:2002-1,Kim:2005-1}.


\begin{corollary}\label{corollary:infinite-rank-fixed-Seifert-form}
Suppose that $V$ is a Seifert form of a knot $K$ with nontrivial Alexander polynomial. Then, there exist infinitely many knots $K_i$ with the same Seifert form $V$ which are linearly independent in~$\cC$. 
\end{corollary}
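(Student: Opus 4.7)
The plan is to construct a single winding number zero satellite operator $P$ with $P(U)=K$ to which Theorem~\ref{theorem:infinite-rank} applies, and then observe that every knot of the form $P(J)$ automatically has Seifert form~$V$, so that an infinite linearly independent subset of $P(\cC)$ produces the desired family $K_i=P(J_i)$.

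First, I would fix a Seifert surface $F\subset S^3$ for $K$ realizing the Seifert matrix~$V$. Since $K$ has nontrivial Alexander polynomial, the Alexander module $H_1(S^3\sm K;\Q[t^{\pm1}])$ is nontrivial. By Lemma~\ref{lemma:nontrivial-self-blanchfield} (and the discussion in Remark~\ref{remark:nonvanishing-of-eta}), one can find an unknotted simple closed curve $\eta\subset S^3\sm K$ with $\lk(\eta,K)=0$ and $\Bl_K(\eta,\eta)\ne0$, where $\Bl_K$ denotes the Blanchfield form of~$K$. I would arrange further that $\eta$ lies in $S^3\sm(K\cup F)$: loops in $S^3\sm(K\cup F)$ lift to closed curves in each sheet of the infinite cyclic cover of $S^3\sm K$, and the resulting classes generate the Alexander module as a $\Z[t^{\pm1}]$-module, so the curve produced by Lemma~\ref{lemma:nontrivial-self-blanchfield} can be chosen within this region.

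Next, identifying the exterior of $\eta$ in $S^3$ with $S^1\times D^2$ turns $K$ into a pattern $P\subset S^1\times D^2$ with $P(U)=K$ and axis~$\eta$; the condition $\lk(\eta,K)=0$ forces $P$ to have winding number zero. By construction $F$ lies entirely inside $S^1\times D^2$ and remains a Seifert surface for $P$ therein. For any knot $J$, the satellite $P(J)$ inherits a Seifert surface $F_J$ obtained by transporting $F$ into a tubular neighborhood of $J$ along the zero framing, and the resulting Seifert matrix equals~$V$; this is the standard observation that a winding number zero satellite of $J$ shares the Seifert form of the companion-free pattern $P(U)$, since all the Seifert form data is encoded by curves lying inside $S^1\times D^2$ and the zero framing preserves their pairwise linking numbers. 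Consequently, every knot in $P(\cC)$ has Seifert form~$V$.

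Finally, since $\Bl_K(\eta,\eta)\ne0$, Theorem~\ref{theorem:infinite-rank} gives that $\langle P(\cC)\rangle$ has infinite rank in~$\cC$. An elementary abelian group argument (any generating set of an infinite rank abelian group contains an infinite linearly independent subset) then yields knots $J_1,J_2,\ldots$ such that $K_i:=P(J_i)$ are linearly independent in~$\cC$. Each $K_i$ has Seifert form $V$ by the previous step, completing the proof. The main obstacle is the simultaneous arrangement of the four conditions on~$\eta$ (unknottedness in $S^3$, disjointness from $K\cup F$, vanishing linking with~$K$, and nontriviality of its self-Blanchfield pairing); this is essentially the content of Lemma~\ref{lemma:nontrivial-self-blanchfield}, and once $\eta$ is in hand the remainder of the argument is routine given the machinery already developed in the paper.
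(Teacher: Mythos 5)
Your proof follows the paper's approach (apply Lemma~\ref{lemma:nontrivial-self-blanchfield} to produce $\eta$, view $K$ as a pattern $P$ in the exterior of $\eta$, invoke Theorem~\ref{theorem:infinite-rank}), and you are right to pay explicit attention to the issue the paper glosses over: to get Seifert form $V$ \emph{specifically} (rather than just ``some'' Seifert form of $K$), one must arrange that a Seifert surface $F$ realizing $V$ lies inside the solid torus $S^1\times D^2$, i.e., that $\eta$ is disjoint from $F$.

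However, your justification for that step is not valid as stated. You argue that since loops in $S^3\sm(K\cup F)$ generate the Alexander module $H_1(S^3\sm K;\Z[t^{\pm1}])$ as a $\Z[t^{\pm1}]$-module, the class $\eta$ produced by Lemma~\ref{lemma:nontrivial-self-blanchfield} ``can be chosen within this region.'' This is a non sequitur: a module being generated by a subset does not mean every element lies in that subset. Concretely, the image of $H_1(S^3\sm(K\cup F))$ in the Alexander module is a finitely generated abelian subgroup; when the Alexander polynomial is non-monic, the Alexander module is not finitely generated as an abelian group, so this image is a \emph{proper} subgroup, and a given class $\eta$ need not be represented by any loop in $S^3\sm(K\cup F)$. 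What you actually need to show is that \emph{some} class in the image of $H_1(S^3\sm(K\cup F))$ has nonzero self-Blanchfield pairing. This is true, but requires an argument: one should rerun the proof of Lemma~\ref{lemma:nontrivial-self-blanchfield} using classes coming from $H_1(S^3\sm(K\cup F))$ (these generate the Alexander module, so by nonsingularity of the rational Blanchfield form one can find $x,y$ among them with $B(x,y)=1/f(t)$ for some nonconstant factor $f$ of $\Delta_K$, and then $x$, $y$, or $x+y$ works; equivalently, compute directly from the Seifert matrix formula for the Blanchfield pairing). With that replacement your argument is complete, and the rest — the observation that winding-number-zero satellites preserve the Seifert form of any pattern Seifert surface contained in $S^1\times D^2$, and the elementary extraction of an infinite linearly independent subset from a generating set of an infinite-rank abelian group — is fine.
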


\begin{proof}
Let $B\ell$ be the Blanchfield form for~$K$.
By Lemma~\ref{lemma:nontrivial-self-blanchfield} below, there exists $\eta\in H_1(M(K);\Z[t^{\pm 1}])$ such that $B\ell (\eta,\eta)$ is nontrivial.
Choose an embedded circle $\eta\in S^3\sm K$ that is unknotted in $S^3$ and represents the element $\eta \in H_1(M(K);\Z[t^{\pm 1}])$.
Identify the exterior of $\eta\subset S^3$ with $S^1\times D^2$ along the zero framing, to view $P := K \subset S^1\times D^2$ as a satellite operator.
By Theorem~\ref{theorem:infinite-rank} and its proof, there exist infinitely many knots $J_0^i$, $i>N$, such that $K_i=P(J_0^i)$ are linearly independent in~$\cC$.
The knots $K_i$ have the same Seifert form as $K=P(U)$ since $P$ has winding number zero.
\end{proof}

\begin{lemma}
  \label{lemma:nontrivial-self-blanchfield}
  Suppose that $K$ is a knot in $S^3$ with nontrivial Alexander polynomial.
  Let $B\ell$ be the Blanchfield form for~$K$.
  Then there is $\eta \in H_1(M(K);\Z[t^{\pm1}])$ such that $\Bl(\eta,\eta)\ne 0$.
\end{lemma}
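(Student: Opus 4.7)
The plan is to argue by contradiction: assume $\Bl(\eta,\eta)=0$ for every $\eta \in A := H_1(M(K);\Z[t^{\pm 1}])$, and show that the Alexander polynomial $\Delta_K(t)$ must then be a unit in $\Z[t^{\pm 1}]$. First, I would pass to the rational Blanchfield form $\Bl_\Q \colon A_\Q \times A_\Q \to \Q(t)/\Q[t^{\pm 1}]$ on $A_\Q := A \otimes_\Z \Q$; it remains nondegenerate and Hermitian with respect to the involution $t \leftrightarrow t^{-1}$. The identity $\Bl(x+y,x+y)=0$ for $x,y \in A$ gives $\Bl(x,y)+\overline{\Bl(x,y)}=0$, and extending by $\Q$-linearity (with trivial conjugation on $\Q$) yields $\Bl_\Q(\xi,\xi)=0$ for every $\xi \in A_\Q$.

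Next, I would polarize after a twist by $t$. Expanding $\Bl_\Q(x+ty,x+ty)=0$ via $\Bl_\Q(x,ty)=t^{-1}\Bl_\Q(x,y)$ together with $\overline{\Bl_\Q(x,y)}=-\Bl_\Q(x,y)$ from the previous step gives
\[
(t-t^{-1})\,\Bl_\Q(x,y) \;=\; 0 \quad\text{in } \Q(t)/\Q[t^{\pm 1}]
\]
for all $x,y\in A_\Q$. Thus every value of $\Bl_\Q$ lies in the $(t-t^{-1})$-torsion submodule $T \subset \Q(t)/\Q[t^{\pm 1}]$, a cyclic $\Q[t^{\pm 1}]$-module isomorphic to $\Q[t^{\pm 1}]/((t-1)(t+1))$ (since $t$ is a unit). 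Nondegeneracy of $\Bl_\Q$ then provides an injection $A_\Q \hookrightarrow \Hom_{\Q[t^{\pm 1}]}(A_\Q,T)$.

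Finally, I would use the primary decomposition $A_\Q \cong \bigoplus_i \Q[t^{\pm 1}]/(p_i^{n_i})$ over the PID $\Q[t^{\pm 1}]$. Since
\[
\Hom_{\Q[t^{\pm 1}]}\bigl(\Q[t^{\pm 1}]/(p_i^{n_i}),\,T\bigr) \;\cong\; \Q[t^{\pm 1}]/\gcd\bigl(p_i^{n_i},(t-1)(t+1)\bigr),
\]
a $\Q$-dimension count in the above injection forces every $p_i \in \{(t-1),(t+1)\}$ with $n_i = 1$, so $\Delta_K = \pm t^j(t-1)^a(t+1)^b$ for some $a,b\ge 0$ and $j \in \Z$. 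The knot condition $\Delta_K(1)=\pm 1$ rules out $a \ge 1$ (which would give $\Delta_K(1)=0$), and integrality together with $\Delta_K(1)=\pm 1$ rules out $b \ge 1$ (which would give $|\Delta_K(1)|=2^b \ne 1$). Hence $\Delta_K$ is a unit in $\Z[t^{\pm 1}]$, contradicting nontriviality. I expect the main obstacle to be the dimension-count step: one must carefully verify that the interplay of primary decomposition, the Hom into the cyclic module $T$, and injectivity from nondegeneracy really produces the strong restriction $p_i \in \{(t-1),(t+1)\}$ with $n_i = 1$, rather than some weaker bound.
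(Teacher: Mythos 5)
Your proof is correct, and it takes a genuinely different route from the paper's. The paper argues concretely: since $\Delta_K$ is nontrivial, one can pick $x,y \in A_\Q$ with $B(x,y)=1/f(t)$ for a nonconstant factor $f \mid \Delta_K$, and then shows by direct computation with the fraction $1/f(t)+1/f(t^{-1})$ that if $B(x,x)=B(y,y)=0$, the equation $B(x+y,x+y)=0$ forces $f(t) = k(t^d\pm 1)$, contradicting $f(1)\mid\Delta_K(1)=\pm1$. You instead assume \emph{all} self-pairings vanish, polarize once (to get $B(x,y)+\overline{B(x,y)}=0$) and again after twisting by $t$ (to get $(t-t^{-1})B(x,y)=0$), concluding that the entire form takes values in the $(t-t^{-1})$-torsion module $T\cong\Q[t^{\pm1}]/\bigl((t-1)(t+1)\bigr)$; nonsingularity then embeds $A_\Q$ into $\Hom_{\Q[t^{\pm1}]}(A_\Q,T)$, and the dimension count via primary decomposition forces every elementary divisor to be $(t-1)$ or $(t+1)$ with exponent one. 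Regarding the step you flagged: it does go through cleanly. Each summand $\Q[t^{\pm1}]/(p^n)$ contributes $n\deg p\ge 1$ to $\dim_\Q A_\Q$ while contributing $\dim_\Q\Hom(\Q[t^{\pm1}]/(p^n),T)\le 1$, with equality to $1$ exactly when $p\in\{(t-1),(t+1)\}$; summing over all summands and comparing with the injection forces $p_i\in\{(t-1),(t+1)\}$ and $n_i=1$ throughout. Your structural argument is arguably cleaner conceptually; the paper's is shorter and more self-contained. Both converge on the same endgame, namely that $\Delta_K(1)=\pm1$ is incompatible with $\Delta_K \doteq (t-1)^a(t+1)^b$ unless $a=b=0$. (Two minor points worth noting for polish: the sesquilinearity means the injection is really into the conjugate module $\overline{\Hom(A_\Q,T)}$, but this has the same $\Q$-dimension so the count is unaffected; and one should note at the start that nonvanishing of $B_\Q(\xi,\xi)$ for $\xi=(1/n)\eta$ with $\eta\in A$ implies nonvanishing of the integral $\Bl(\eta,\eta)$, as the paper also tacitly uses.)
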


\begin{proof}
  Let $\Delta_K(t) \in \Z[t]$ be the Alexander polynomial of~$K$.
  Let $B$ be the rational Blanchfield form on the rational Alexander module $H_1(M(K);\Q[t^{\pm1}]) = H_1(M(K);\Z[t^{\pm1}]) \otimes \Q$.
  The value of $B$ lies in~$\Q(t)/\Q[t^{\pm1}]$.
  It suffices to show that $B(\eta,\eta)\ne 0$ for some $\eta \in H_1(M(K);\Q[t^{\pm1}])$.
  Recall that $\Q[t^{\pm1}]$ is a PID and $B$ is nonsingular.
  Since $\Delta_K(t)$ is nontrivial, it follows that there are $x,y\in H_1(M(K);\Q[t^{\pm 1}])$ such that $B(x,y)=1/f(t) \bmod \Q[t^{\pm1}]$ for some factor $f(t)\in \Z[t]$ of~$\Delta_K(t)$ with $\deg f \ge 1$, $f(0)\ne 0$.
  If $B(x,x)\ne 0$ or $B(y,y)\ne 0$, the conclusion holds.
  Suppose that $B(x,x)=B(y,y)=0$.
  Then we have
  \[
    B(x+y,x+y)=\frac1{f(t)} + \frac1{f(t^{-1})}  = \frac{f(t)+f(t^{-1})}{f(t)f(t^{-1})} \mod \Q[t^{\pm1}].  
  \]
  Suppose $B(x+y,x+y)=0$.
  Then $f(t)f(t^{-1}) \mid f(t)+f(t^{-1})$, and thus $f(t) \mid f(t^{-1}) \mid f(t)$ in $\Q[t^{\pm1}]$.
  It follows that $f(t^{-1}) = a t^{-d}f(t)$ where $d=\deg f$ and $a\in \Q$ is nonzero.
  So $B(x+y,x+y)=(a+t^d)/af(t)=0$, and consequently $f(t) = k(a+t^d)$ for some nonzero~$k$.
  Since $k(a+t^{-d}) = f(t^{-1}) = a t^{-d}f(t) = k(a + a^2 t^{-d})$, it follows that $a=\pm1$.
  Since $f(t)\in\Z[t]$, $k$ is an integer.
  We have $f(1)=0$ or $2k$.
  This is a contradiction since $f(t) \mid \Delta_K(t)$ in $\Z[t]$ and $\Delta_K(1)=\pm1$. 
  Thus $B(x+y,x+y)\ne 0$ and the conclusion holds.    
\end{proof}

\subsubsection*{Acknowledgment} The authors thank the anonymous referee for many helpful comments and inspiring questions.

\bibliographystyle{amsalpha}
\def\MR#1{}
\bibliography{research}

\end{document}